\newtheorem{theorem}{Theorem}
\theoremstyle{plain}
\newtheorem{axiom}{Axiom}
\newtheorem{conjecture}{Conjecture}
\newtheorem{corollary}{Corollary}
\newtheorem{definition}{Definition}
\newtheorem{example}{Example}
\newtheorem{exercise}{Exercise}
\newtheorem{lemma}{Lemma}
\newtheorem{proposition}{Proposition}
\newtheorem{remark}{Remark}
\numberwithin{equation}{section}
\numberwithin{equation}{section}
\let\pdfoutput=\undefined\fi
\chardef\@x10\chardef\@xv60
\def\tcitime{
\def\@time{%
  \@minute\time\@hour\@minute\divide\@hour\@xv
  \ifnum\@hour<\@x 0\fi\the\@hour:%
  \multiply\@hour\@xv\advance\@minute-\@hour
  \ifnum\@minute<\@x 0\fi\the\@minute
  }}%
\def\x@hyperref#1#2#3{%
   % Turn off various catcodes before reading parameter 4
   \catcode`\~ = 12
   \catcode`\$ = 12
   \catcode`\_ = 12
   \catcode`\# = 12
   \catcode`\& = 12
   \catcode`\% = 12
   \y@hyperref{#1}{#2}{#3}%
}
\def\y@hyperref#1#2#3#4{%
   #2\ref{#4}#3
   \catcode`\~ = 13
   \catcode`\$ = 3
   \catcode`\_ = 8
   \catcode`\# = 6
   \catcode`\& = 4
   \catcode`\% = 14
}
\def\QCTOpt[#1]#2{%
  \def\QCTOptB{#1}
  \def\QCTOptA{#2}
}
\def\QCTNOpt#1{%
  \def\QCTOptA{#1}
  \let\QCTOptB\empty
}
\def\Qct{%
  \@ifnextchar[{%
    \QCTOpt}{\QCTNOpt}
}
\def\QCBOpt[#1]#2{%
  \def\QCBOptB{#1}%
  \def\QCBOptA{#2}%
}
\def\QCBNOpt#1{%
  \def\QCBOptA{#1}%
  \let\QCBOptB\empty
}
\def\Qcb{%
  \@ifnextchar[{%
    \QCBOpt}{\QCBNOpt}%
}
\def\PrepCapArgs{%
  \ifx\QCBOptA\empty
    \ifx\QCTOptA\empty
      {}%
    \else
      \ifx\QCTOptB\empty
        {\QCTOptA}%
      \else
        [\QCTOptB]{\QCTOptA}%
      \fi
    \fi
  \else
    \ifx\QCBOptA\empty
      {}%
    \else
      \ifx\QCBOptB\empty
        {\QCBOptA}%
      \else
        [\QCBOptB]{\QCBOptA}%
      \fi
    \fi
  \fi
}
\def\GRAPHICSPS#1{%
 \ifcase\GRAPHICSTYPE%\GRAPHICSTYPE=0
   \special{ps: #1}%
 \or%\GRAPHICSTYPE=1
   \special{language "PS", include "#1"}%
%%%\or%\GRAPHICSTYPE=2
%%%  #1%
 \fi
}%
\def\graffile#1#2#3#4{%
%%% \ifnum\GRAPHICSTYPE=\tw@
%%%  %Following if using psfig
%%%  \@ifundefined{psfig}{\input psfig.tex}{}%
%%%  \psfig{file=#1, height=#3, width=#2}%
%%% \else
  %Following for all others
  % JCS - added BOXTHEFRAME, see below
    \bgroup
	   \@inlabelfalse
       \leavevmode
       \@ifundefined{bbl@deactivate}{\def~{\string~}}{\activesoff}%
        \raise -#4 \BOXTHEFRAME{%
           \hbox to #2{\raise #3\hbox to #2{\null #1\hfil}}}%
    \egroup
}%
\def\draftbox#1#2#3#4{%
 \leavevmode\raise -#4 \hbox{%
  \frame{\rlap{\protect\tiny #1}\hbox to #2%
   {\vrule height#3 width\z@ depth\z@\hfil}%
  }%
 }%
}%
\let\nographics=\@msidraft
\newif\ifwasdraft
\def\GRAPHIC#1#2#3#4#5{%
   \ifnum\@msidraft=\@ne\draftbox{#2}{#3}{#4}{#5}%
   \else\graffile{#1}{#3}{#4}{#5}%
   \fi
}
\def\addtoLaTeXparams#1{%
    \edef\LaTeXparams{\LaTeXparams #1}}%
\newif\ifBoxFrame \BoxFramefalse
\newif\ifOverFrame \OverFramefalse
\newif\ifUnderFrame \UnderFramefalse
\def\BOXTHEFRAME#1{%
   \hbox{%
      \ifBoxFrame
         \frame{#1}%
      \else
         {#1}%
      \fi
   }%
}
\def\doFRAMEparams#1{\BoxFramefalse\OverFramefalse\UnderFramefalse\readFRAMEparams#1\end}%
\def\readFRAMEparams#1{%
 \ifx#1\end%
  \let\next=\relax
  \else
  \ifx#1i\dispkind=\z@\fi
  \ifx#1d\dispkind=\@ne\fi
  \ifx#1f\dispkind=\tw@\fi
  \ifx#1t\addtoLaTeXparams{t}\fi
  \ifx#1b\addtoLaTeXparams{b}\fi
  \ifx#1p\addtoLaTeXparams{p}\fi
  \ifx#1h\addtoLaTeXparams{h}\fi
  \ifx#1X\BoxFrametrue\fi
  \ifx#1O\OverFrametrue\fi
  \ifx#1U\UnderFrametrue\fi
  \ifx#1w
    \ifnum\@msidraft=1\wasdrafttrue\else\wasdraftfalse\fi
    \@msidraft=\@ne
  \fi
  \let\next=\readFRAMEparams
  \fi
 \next
 }%
\def\IFRAME#1#2#3#4#5#6{%
      \bgroup
      \let\QCTOptA\empty
      \let\QCTOptB\empty
      \let\QCBOptA\empty
      \let\QCBOptB\empty
      #6%
      \parindent=0pt
      \leftskip=0pt
      \rightskip=0pt
      \setbox0=\hbox{\QCBOptA}%
      \@tempdima=#1\relax
      \ifOverFrame
          % Do this later
          \typeout{This is not implemented yet}%
          \show\HELP
      \else
         \ifdim\wd0>\@tempdima
            \advance\@tempdima by \@tempdima
            \ifdim\wd0 >\@tempdima
               \setbox1 =\vbox{%
                  \unskip\hbox to \@tempdima{\hfill\GRAPHIC{#5}{#4}{#1}{#2}{#3}\hfill}%
                  \unskip\hbox to \@tempdima{\parbox[b]{\@tempdima}{\QCBOptA}}%
               }%
               \wd1=\@tempdima
            \else
               \textwidth=\wd0
               \setbox1 =\vbox{%
                 \noindent\hbox to \wd0{\hfill\GRAPHIC{#5}{#4}{#1}{#2}{#3}\hfill}\\%
                 \noindent\hbox{\QCBOptA}%
               }%
               \wd1=\wd0
            \fi
         \else
            \ifdim\wd0>0pt
              \hsize=\@tempdima
              \setbox1=\vbox{%
                \unskip\GRAPHIC{#5}{#4}{#1}{#2}{0pt}%
                \break
                \unskip\hbox to \@tempdima{\hfill \QCBOptA\hfill}%
              }%
              \wd1=\@tempdima
           \else
              \hsize=\@tempdima
              \setbox1=\vbox{%
                \unskip\GRAPHIC{#5}{#4}{#1}{#2}{0pt}%
              }%
              \wd1=\@tempdima
           \fi
         \fi
         \@tempdimb=\ht1
         %\advance\@tempdimb by \dp1
         \advance\@tempdimb by -#2
         \advance\@tempdimb by #3
         \leavevmode
         \raise -\@tempdimb \hbox{\box1}%
      \fi
      \egroup%
}%
\def\DFRAME#1#2#3#4#5{%
  \vspace\topsep
  \hfil\break
  \bgroup
     \leftskip\@flushglue
	 \rightskip\@flushglue
	 \parindent\z@
	 \parfillskip\z@skip
     \let\QCTOptA\empty
     \let\QCTOptB\empty
     \let\QCBOptA\empty
     \let\QCBOptB\empty
	 \vbox\bgroup
        \ifOverFrame 
           #5\QCTOptA\par
        \fi
        \GRAPHIC{#4}{#3}{#1}{#2}{\z@}%
        \ifUnderFrame 
           \break#5\QCBOptA
        \fi
	 \egroup
  \egroup
  \vspace\topsep
  \break
}%
\def\FFRAME#1#2#3#4#5#6#7{%
 %If float.sty loaded and float option is 'h', change to 'H'  (gp) 1998/09/05
  \@ifundefined{floatstyle}
    {%floatstyle undefined (and float.sty not present), no change
     \begin{figure}[#1]%
    }
    {%floatstyle DEFINED
	 \ifx#1h%Only the h parameter, change to H
      \begin{figure}[H]%
	 \else
      \begin{figure}[#1]%
	 \fi
	}
  \let\QCTOptA\empty
  \let\QCTOptB\empty
  \let\QCBOptA\empty
  \let\QCBOptB\empty
  \ifOverFrame
    #4
    \ifx\QCTOptA\empty
    \else
      \ifx\QCTOptB\empty
        \caption{\QCTOptA}%
      \else
        \caption[\QCTOptB]{\QCTOptA}%
      \fi
    \fi
    \ifUnderFrame\else
      \label{#5}%
    \fi
  \else
    \UnderFrametrue%
  \fi
  \begin{center}\GRAPHIC{#7}{#6}{#2}{#3}{\z@}\end{center}%
  \ifUnderFrame
    #4
    \ifx\QCBOptA\empty
      \caption{}%
    \else
      \ifx\QCBOptB\empty
        \caption{\QCBOptA}%
      \else
        \caption[\QCBOptB]{\QCBOptA}%
      \fi
    \fi
    \label{#5}%
  \fi
  \end{figure}%
 }%
\def\makeactives{
  \catcode`\"=\active
  \catcode`\;=\active
  \catcode`\:=\active
  \catcode`\'=\active
  \catcode`\~=\active
}
   \gdef\activesoff{%
      \def"{\string"}%
      \def;{\string;}%
      \def:{\string:}%
      \def'{\string'}%
      \def~{\string~}%
      %\bbl@deactivate{"}%
      %\bbl@deactivate{;}%
      %\bbl@deactivate{:}%
      %\bbl@deactivate{'}%
    }
\def\FRAME#1#2#3#4#5#6#7#8{%
 \bgroup
 \ifnum\@msidraft=\@ne
   \wasdrafttrue
 \else
   \wasdraftfalse%
 \fi
 \def\LaTeXparams{}%
 \dispkind=\z@
 \def\LaTeXparams{}%
 \doFRAMEparams{#1}%
 \ifnum\dispkind=\z@\IFRAME{#2}{#3}{#4}{#7}{#8}{#5}\else
  \ifnum\dispkind=\@ne\DFRAME{#2}{#3}{#7}{#8}{#5}\else
   \ifnum\dispkind=\tw@
    \edef\@tempa{\noexpand\FFRAME{\LaTeXparams}}%
    \@tempa{#2}{#3}{#5}{#6}{#7}{#8}%
    \fi
   \fi
  \fi
  \ifwasdraft\@msidraft=1\else\@msidraft=0\fi{}%
  \egroup
 }%
\def\TEXUX#1{"texux"}
\long\def\QQQ#1#2{%
     \long\expandafter\def\csname#1\endcsname{#2}}%
\long\def\QQA#1#2{}%
\def\QTR#1#2{{\csname#1\endcsname {#2}}}%
\def\EXPAND#1[#2]#3{}%
\def\NOEXPAND#1[#2]#3{}%
\def\LaTeXparent#1{}%
\def\ChildStyles#1{}%
\def\ChildDefaults#1{}%
\def\QTagDef#1#2#3{}%
  \providecommand{\UNICODE}[2][]{\protect\rule{.1in}{.1in}}
  \providecommand{\U}[1]{\protect\rule{.1in}{.1in}}
\def\QQfnmark#1{\footnotemark}
 \def\abstract{%
  \if@twocolumn
   \section*{Abstract (Not appropriate in this style!)}%
   \else \small 
   \begin{center}{\bf Abstract\vspace{-.5em}\vspace{\z@}}\end{center}%
   \quotation 
   \fi
  }%
   \def\registered{\relax\ifmmode{}\r@gistered
                    \else$\m@th\r@gistered$\fi}%
 \def\r@gistered{^{\ooalign
  {\hfil\raise.07ex\hbox{$\scriptstyle\rm\text{R}$}\hfil\crcr
  \mathhexbox20D}}}}{}%
\newdimen\theight
\def\newfmtname{LaTeX2e}
  \DeclareOldFontCommand{\rm}{\normalfont\rmfamily}{\mathrm}
  \DeclareOldFontCommand{\sf}{\normalfont\sffamily}{\mathsf}
  \DeclareOldFontCommand{\tt}{\normalfont\ttfamily}{\mathtt}
  \DeclareOldFontCommand{\bf}{\normalfont\bfseries}{\mathbf}
  \DeclareOldFontCommand{\it}{\normalfont\itshape}{\mathit}
  \DeclareOldFontCommand{\sl}{\normalfont\slshape}{\@nomath\sl}
  \DeclareOldFontCommand{\sc}{\normalfont\scshape}{\@nomath\sc}
\def\alpha{{\Greekmath 010B}}%
\def\beta{{\Greekmath 010C}}%
\def\gamma{{\Greekmath 010D}}%
\def\delta{{\Greekmath 010E}}%
\def\epsilon{{\Greekmath 010F}}%
\def\zeta{{\Greekmath 0110}}%
\def\eta{{\Greekmath 0111}}%
\def\theta{{\Greekmath 0112}}%
\def\iota{{\Greekmath 0113}}%
\def\kappa{{\Greekmath 0114}}%
\def\lambda{{\Greekmath 0115}}%
\def\mu{{\Greekmath 0116}}%
\def\nu{{\Greekmath 0117}}%
\def\xi{{\Greekmath 0118}}%
\def\pi{{\Greekmath 0119}}%
\def\rho{{\Greekmath 011A}}%
\def\sigma{{\Greekmath 011B}}%
\def\tau{{\Greekmath 011C}}%
\def\upsilon{{\Greekmath 011D}}%
\def\phi{{\Greekmath 011E}}%
\def\chi{{\Greekmath 011F}}%
\def\psi{{\Greekmath 0120}}%
\def\omega{{\Greekmath 0121}}%
\def\varepsilon{{\Greekmath 0122}}%
\def\vartheta{{\Greekmath 0123}}%
\def\varpi{{\Greekmath 0124}}%
\def\varrho{{\Greekmath 0125}}%
\def\varsigma{{\Greekmath 0126}}%
\def\varphi{{\Greekmath 0127}}%
\def\nabla{{\Greekmath 0272}}
\def\FindBoldGroup{%
   {\setbox0=\hbox{$\mathbf{x\global\edef\theboldgroup{\the\mathgroup}}$}}%
}
\def\Greekmath#1#2#3#4{%
    \if@compatibility
        \ifnum\mathgroup=\symbold
           \mathchoice{\mbox{\boldmath$\displaystyle\mathchar"#1#2#3#4$}}%
                      {\mbox{\boldmath$\textstyle\mathchar"#1#2#3#4$}}%
                      {\mbox{\boldmath$\scriptstyle\mathchar"#1#2#3#4$}}%
                      {\mbox{\boldmath$\scriptscriptstyle\mathchar"#1#2#3#4$}}%
        \else
           \mathchar"#1#2#3#4% 
        \fi 
    \else 
        \FindBoldGroup
        \ifnum\mathgroup=\theboldgroup % For 2e
           \mathchoice{\mbox{\boldmath$\displaystyle\mathchar"#1#2#3#4$}}%
                      {\mbox{\boldmath$\textstyle\mathchar"#1#2#3#4$}}%
                      {\mbox{\boldmath$\scriptstyle\mathchar"#1#2#3#4$}}%
                      {\mbox{\boldmath$\scriptscriptstyle\mathchar"#1#2#3#4$}}%
        \else
           \mathchar"#1#2#3#4% 
        \fi     	    
	  \fi}
\newif\ifGreekBold  \GreekBoldfalse
\let\SAVEPBF=\pbf
\def\pbf{\GreekBoldtrue\SAVEPBF}%
  \newcounter{equationnumber}  
  \def\mathletters{%
     \addtocounter{equation}{1}
     \edef\@currentlabel{\theequation}%
     \setcounter{equationnumber}{\c@equation}
     \setcounter{equation}{0}%
     \edef\theequation{\@currentlabel\noexpand\alph{equation}}%
  }
    \def\BibTeX{{\rm B\kern-.05em{\sc i\kern-.025em b}\kern-.08em
                 T\kern-.1667em\lower.7ex\hbox{E}\kern-.125emX}}}{}%
\def\AmS{{\protect\usefont{OMS}{cmsy}{m}{n}%
                A\kern-.1667em\lower.5ex\hbox{M}\kern-.125emS}}}{}%
\def\@@eqncr{\let\@tempa\relax
    \ifcase\@eqcnt \def\@tempa{& & &}\or \def\@tempa{& &}%
      \else \def\@tempa{&}\fi
     \@tempa
     \if@eqnsw
        \iftag@
           \@taggnum
        \else
           \@eqnnum\stepcounter{equation}%
        \fi
     \fi
     \global\tag@false
     \global\@eqnswtrue
     \global\@eqcnt\z@\cr}
\def\TCItag{\@ifnextchar*{\@TCItagstar}{\@TCItag}}
\def\@TCItag#1{%
    \global\tag@true
    \global\def\@taggnum{(#1)}%
    \global\def\@currentlabel{#1}}
\def\@TCItagstar*#1{%
    \global\tag@true
    \global\def\@taggnum{#1}%
    \global\def\@currentlabel{#1}}
\def\tint{\msi@int\textstyle\int}%
\def\tiint{\msi@int\textstyle\iint}%
\def\tiiint{\msi@int\textstyle\iiint}%
\def\tiiiint{\msi@int\textstyle\iiiint}%
\def\tidotsint{\msi@int\textstyle\idotsint}%
\def\toint{\msi@int\textstyle\oint}%
\newtoks\temptoksa
\newtoks\temptoksb
\newtoks\temptoksc
\def\msi@int#1#2{%
 \def\@temp{{#1#2\the\temptoksc_{\the\temptoksa}^{\the\temptoksb}}}%   
 \futurelet\@nextcs
 \@int
}
\def\@int{%
   \ifx\@nextcs\limits
      \typeout{Found limits}%
      \temptoksc={\limits}%
	  \let\@next\@intgobble%
   \else\ifx\@nextcs\nolimits
      \typeout{Found nolimits}%
      \temptoksc={\nolimits}%
	  \let\@next\@intgobble%
   \else
      \typeout{Did not find limits or no limits}%
      \temptoksc={}%
      \let\@next\msi@limits%
   \fi\fi
   \@next   
}%
\def\@intgobble#1{%
   \typeout{arg is #1}%
   \msi@limits
}
\def\msi@limits{%
   \temptoksa={}%
   \temptoksb={}%
   \@ifnextchar_{\@limitsa}{\@limitsb}%
}
\def\@limitsa_#1{%
   \temptoksa={#1}%
   \@ifnextchar^{\@limitsc}{\@temp}%
}
\def\@limitsb{%
   \@ifnextchar^{\@limitsc}{\@temp}%
}
\def\@limitsc^#1{%
   \temptoksb={#1}%
   \@ifnextchar_{\@limitsd}{\@temp}%   
}
\def\@limitsd_#1{%
   \temptoksa={#1}%
   \@temp
}
\def\dint{\msi@int\displaystyle\int}%
\def\diint{\msi@int\displaystyle\iint}%
\def\diiint{\msi@int\displaystyle\iiint}%
\def\diiiint{\msi@int\displaystyle\iiiint}%
\def\didotsint{\msi@int\displaystyle\idotsint}%
\def\doint{\msi@int\displaystyle\oint}%
\def\ExitTCILatex{\makeatother }
\if@compatibility\message{amsmath already loaded}\fi\aftergroup\ExitTCILatex}
\if@compatibility\message{amstex already loaded}\fi\aftergroup\ExitTCILatex}
\if@compatibility\message{amsgen already loaded}\fi\aftergroup\ExitTCILatex}
\let\DOTSI\relax
\def\RIfM@{\relax\ifmmode}%
\def\FN@{\futurelet\next}%
\def\iint{\DOTSI\intno@\tw@\FN@\ints@}%
\def\iiint{\DOTSI\intno@\thr@@\FN@\ints@}%
\def\iiiint{\DOTSI\intno@4 \FN@\ints@}%
\def\idotsint{\DOTSI\intno@\z@\FN@\ints@}%
\def\ints@{\findlimits@\ints@@}%
\newif\iflimtoken@
\newif\iflimits@
\def\findlimits@{\limtoken@true\ifx\next\limits\limits@true
 \else\ifx\next\nolimits\limits@false\else
 \limtoken@false\ifx\ilimits@\nolimits\limits@false\else
 \ifinner\limits@false\else\limits@true\fi\fi\fi\fi}%
\def\multint@{\int\ifnum\intno@=\z@\intdots@                          %1
 \else\intkern@\fi                                                    %2
 \ifnum\intno@>\tw@\int\intkern@\fi                                   %3
 \ifnum\intno@>\thr@@\int\intkern@\fi                                 %4
 \int}%                                                               %5
\def\multintlimits@{\intop\ifnum\intno@=\z@\intdots@\else\intkern@\fi
 \ifnum\intno@>\tw@\intop\intkern@\fi
 \ifnum\intno@>\thr@@\intop\intkern@\fi\intop}%
\def\intic@{%
    \mathchoice{\hskip.5em}{\hskip.4em}{\hskip.4em}{\hskip.4em}}%
\def\negintic@{\mathchoice
 {\hskip-.5em}{\hskip-.4em}{\hskip-.4em}{\hskip-.4em}}%
\def\ints@@{\iflimtoken@                                              %1
 \def\ints@@@{\iflimits@\negintic@
   \mathop{\intic@\multintlimits@}\limits                             %2
  \else\multint@\nolimits\fi                                          %3
  \eat@}%                                                             %4
 \else                                                                %5
 \def\ints@@@{\iflimits@\negintic@
  \mathop{\intic@\multintlimits@}\limits\else
  \multint@\nolimits\fi}\fi\ints@@@}%
\def\intkern@{\mathchoice{\!\!\!}{\!\!}{\!\!}{\!\!}}%
\def\plaincdots@{\mathinner{\cdotp\cdotp\cdotp}}%
\def\intdots@{\mathchoice{\plaincdots@}%
 {{\cdotp}\mkern1.5mu{\cdotp}\mkern1.5mu{\cdotp}}%
 {{\cdotp}\mkern1mu{\cdotp}\mkern1mu{\cdotp}}%
 {{\cdotp}\mkern1mu{\cdotp}\mkern1mu{\cdotp}}}%
\def\RIfM@{\relax\protect\ifmmode}
\def\text{\RIfM@\expandafter\text@\else\expandafter\mbox\fi}
\let\nfss@text\text
\def\text@#1{\mathchoice
   {\textdef@\displaystyle\f@size{#1}}%
   {\textdef@\textstyle\tf@size{\firstchoice@false #1}}%
   {\textdef@\textstyle\sf@size{\firstchoice@false #1}}%
   {\textdef@\textstyle \ssf@size{\firstchoice@false #1}}%
   \glb@settings}
\def\textdef@#1#2#3{\hbox{{%
                    \everymath{#1}%
                    \let\f@size#2\selectfont
                    #3}}}
\newif\iffirstchoice@
\def\Let@{\relax\iffalse{\fi\let\\=\cr\iffalse}\fi}%
\def\vspace@{\def\vspace##1{\crcr\noalign{\vskip##1\relax}}}%
\def\multilimits@{\bgroup\vspace@\Let@
 \baselineskip\fontdimen10 \scriptfont\tw@
 \advance\baselineskip\fontdimen12 \scriptfont\tw@
 \lineskip\thr@@\fontdimen8 \scriptfont\thr@@
 \lineskiplimit\lineskip
 \vbox\bgroup\ialign\bgroup\hfil$\m@th\scriptstyle{##}$\hfil\crcr}%
\def\Sb{_\multilimits@}%
\def\endSb{\crcr\egroup\egroup\egroup}%
\def\Sp{^\multilimits@}%
\newdimen\ex@
\def\rightarrowfill@#1{$#1\m@th\mathord-\mkern-6mu\cleaders
 \hbox{$#1\mkern-2mu\mathord-\mkern-2mu$}\hfill
 \mkern-6mu\mathord\rightarrow$}%
\def\leftarrowfill@#1{$#1\m@th\mathord\leftarrow\mkern-6mu\cleaders
 \hbox{$#1\mkern-2mu\mathord-\mkern-2mu$}\hfill\mkern-6mu\mathord-$}%
\def\leftrightarrowfill@#1{$#1\m@th\mathord\leftarrow
\mkern-6mu\cleaders
 \hbox{$#1\mkern-2mu\mathord-\mkern-2mu$}\hfill
 \mkern-6mu\mathord\rightarrow$}%
\def\overrightarrow{\mathpalette\overrightarrow@}%
\def\overrightarrow@#1#2{\vbox{\ialign{##\crcr\rightarrowfill@#1\crcr
 \noalign{\kern-\ex@\nointerlineskip}$\m@th\hfil#1#2\hfil$\crcr}}}%
\def\overleftarrow{\mathpalette\overleftarrow@}%
\def\overleftarrow@#1#2{\vbox{\ialign{##\crcr\leftarrowfill@#1\crcr
 \noalign{\kern-\ex@\nointerlineskip}$\m@th\hfil#1#2\hfil$\crcr}}}%
\def\overleftrightarrow{\mathpalette\overleftrightarrow@}%
\def\overleftrightarrow@#1#2{\vbox{\ialign{##\crcr
   \leftrightarrowfill@#1\crcr
 \noalign{\kern-\ex@\nointerlineskip}$\m@th\hfil#1#2\hfil$\crcr}}}%
\def\underrightarrow{\mathpalette\underrightarrow@}%
\def\underrightarrow@#1#2{\vtop{\ialign{##\crcr$\m@th\hfil#1#2\hfil
  $\crcr\noalign{\nointerlineskip}\rightarrowfill@#1\crcr}}}%
\def\underleftarrow{\mathpalette\underleftarrow@}%
\def\underleftarrow@#1#2{\vtop{\ialign{##\crcr$\m@th\hfil#1#2\hfil
  $\crcr\noalign{\nointerlineskip}\leftarrowfill@#1\crcr}}}%
\def\underleftrightarrow{\mathpalette\underleftrightarrow@}%
\def\underleftrightarrow@#1#2{\vtop{\ialign{##\crcr$\m@th
  \hfil#1#2\hfil$\crcr
 \noalign{\nointerlineskip}\leftrightarrowfill@#1\crcr}}}%
\def\qopnamewl@#1{\mathop{\operator@font#1}\nlimits@}
\let\nlimits@\displaylimits
\def\setboxz@h{\setbox\z@\hbox}
\def\varlim@#1#2{\mathop{\vtop{\ialign{##\crcr
 \hfil$#1\m@th\operator@font lim$\hfil\crcr
 \noalign{\nointerlineskip}#2#1\crcr
 \noalign{\nointerlineskip\kern-\ex@}\crcr}}}}
 \def\rightarrowfill@#1{\m@th\setboxz@h{$#1-$}\ht\z@\z@
  $#1\copy\z@\mkern-6mu\cleaders
  \hbox{$#1\mkern-2mu\box\z@\mkern-2mu$}\hfill
  \mkern-6mu\mathord\rightarrow$}
\def\leftarrowfill@#1{\m@th\setboxz@h{$#1-$}\ht\z@\z@
  $#1\mathord\leftarrow\mkern-6mu\cleaders
  \hbox{$#1\mkern-2mu\copy\z@\mkern-2mu$}\hfill
  \mkern-6mu\box\z@$}
\def\projlim{\qopnamewl@{proj\,lim}}
\def\injlim{\qopnamewl@{inj\,lim}}
\def\varinjlim{\mathpalette\varlim@\rightarrowfill@}
\def\varprojlim{\mathpalette\varlim@\leftarrowfill@}
\def\varliminf{\mathpalette\varliminf@{}}
\def\varliminf@#1{\mathop{\underline{\vrule\@depth.2\ex@\@width\z@
   \hbox{$#1\m@th\operator@font lim$}}}}
\def\varlimsup{\mathpalette\varlimsup@{}}
\def\varlimsup@#1{\mathop{\overline
  {\hbox{$#1\m@th\operator@font lim$}}}}
\def\align{\@verbatim \frenchspacing\@vobeyspaces \@alignverbatim
You are using the "align" environment in a style in which it is not defined.}
\let\csname endalign*\endcsname =\endtrivlist
\def\alignat{\@verbatim \frenchspacing\@vobeyspaces \@alignatverbatim
You are using the "alignat" environment in a style in which it is not defined.}
\let\csname endalignat*\endcsname =\endtrivlist
\def\xalignat{\@verbatim \frenchspacing\@vobeyspaces \@xalignatverbatim
You are using the "xalignat" environment in a style in which it is not defined.}
\let\csname endxalignat*\endcsname =\endtrivlist
\def\gather{\@verbatim \frenchspacing\@vobeyspaces \@gatherverbatim
You are using the "gather" environment in a style in which it is not defined.}
\let\csname endgather*\endcsname =\endtrivlist
\def\multiline{\@verbatim \frenchspacing\@vobeyspaces \@multilineverbatim
You are using the "multiline" environment in a style in which it is not defined.}
\let\csname endmultiline*\endcsname =\endtrivlist
\def\arrax{\@verbatim \frenchspacing\@vobeyspaces \@arraxverbatim
You are using a type of "array" construct that is only allowed in AmS-LaTeX.}
\def\tabulax{\@verbatim \frenchspacing\@vobeyspaces \@tabulaxverbatim
You are using a type of "tabular" construct that is only allowed in AmS-LaTeX.}
\let\csname endarrax*\endcsname =\endtrivlist
\let\csname endtabulax*\endcsname =\endtrivlist
 \def\endequation{%
     \ifmmode\ifinner % FLEQN hack
      \iftag@
        \addtocounter{equation}{-1} % undo the increment made in the begin part
        $\hfil
           \displaywidth\linewidth\@taggnum\egroup \endtrivlist
        \global\tag@false
        \global\@ignoretrue   
      \else
        $\hfil
           \displaywidth\linewidth\@eqnnum\egroup \endtrivlist
        \global\tag@false
        \global\@ignoretrue 
      \fi
     \else   
      \iftag@
        \addtocounter{equation}{-1} % undo the increment made in the begin part
        \eqno \hbox{\@taggnum}
        \global\tag@false%
        $$\global\@ignoretrue
      \else
        \eqno \hbox{\@eqnnum}% $$ BRACE MATCHING HACK
        $$\global\@ignoretrue
      \fi
     \fi\fi
 } 
 \newif\iftag@ \tag@false
 \def\TCItag{\@ifnextchar*{\@TCItagstar}{\@TCItag}}
 \def\@TCItag#1{%
     \global\tag@true
     \global\def\@taggnum{(#1)}%
     \global\def\@currentlabel{#1}}
 \def\@TCItagstar*#1{%
     \global\tag@true
     \global\def\@taggnum{#1}%
     \global\def\@currentlabel{#1}}
     \def\tag{\@ifnextchar*{\@tagstar}{\@tag}}
     \def\@tag#1{%
         \global\tag@true
         \global\def\@taggnum{(#1)}}
     \def\@tagstar*#1{%
         \global\tag@true
         \global\def\@taggnum{#1}}
\begin{document}
\title[Wave equation with nonlinear damping]{Energy decay rates for
solutions of the wave equations with nonlinear damping in exterior domain}
\author{M. Daoulatli}
\address{Imam Abdulrahman Bin Faisal University, King Saudi Arabia \&
University of Carthage, Tunisia. }
\email[M. Daoulatli]{moez.daoulatli@infcom.rnu.tn}
\date{\today }
\subjclass[2000]{Primary: 35L05, 35B40; Secondary: 35L70, 35B35 }
\keywords{ Wave equation, nonlinear damping, Decay rate, exterior domain}
\thanks{}

\begin{abstract}
In this paper we study the behaviors of the energy of solutions of the wave
equations with localized nonlinear damping in exterior domains.
\end{abstract}

\maketitle

\section{Introduction and Statement of the results}

Let $O$ be a compact domain of $%
%TCIMACRO{\U{211d} }%
%BeginExpansion
\mathbb{R}
%EndExpansion
^{d}$ $\left( d\geq 1\right) $ with $C^{\infty }$ boundary $\Gamma $ and $%
\Omega =\mathbb{R}^{d}\backslash O$. Consider the following wave equation
with localized nonlinear damping 
\begin{equation}
\left\{ 
\begin{array}{lc}
\partial _{t}^{2}u-\Delta u+a\left( x\right) \left\vert \partial
_{t}u\right\vert ^{r-1}\partial _{t}u=0 & \text{in }\mathbb{R}_{+}\times
\Omega , \\ 
u=0 & \text{on }\mathbb{R}_{+}\times \Gamma , \\ 
u\left( 0,x\right) =u_{0}\quad \text{ and }\quad \partial _{t}u\left(
0,x\right) =u_{1}, & 
\end{array}%
\right.  \label{system}
\end{equation}%
here $\Delta $ denotes the Laplace operator in the space variables. $a\left(
x\right) $ is a nonnegative function in $L^{\infty }\left( \Omega \right) $.
Throughout this paper we assume that $1<r\leq 1+\frac{2}{d}.$ Below $r_{0}>0$
is a fixed constant such that $O\subset B_{r_{o}}=\{x\in 
%TCIMACRO{\U{211d} }%
%BeginExpansion
\mathbb{R}
%EndExpansion
^{d};\left\vert x\right\vert <r_{0}\}.$

The existence and uniqueness of global solutions to the problem $\left( \ref%
{system}\right) $\ is standard (see \cite{lions straus}). If $\left(
u_{0},u_{1}\right) $ is in $H_{0}^{1}\left( \Omega \right) \cap H^{2}\left(
\Omega \right) \times H_{0}^{1}\left( \Omega \right) $, then the system (\ref%
{system}), admits a unique solution $u$ in the class%
\begin{equation*}
u\in C^{0}\left( 
%TCIMACRO{\U{211d} }%
%BeginExpansion
\mathbb{R}
%EndExpansion
_{+},H_{0}^{1}\left( \Omega \right) \cap H^{2}\left( \Omega \right) \right)
\cap C^{1}\left( 
%TCIMACRO{\U{211d} }%
%BeginExpansion
\mathbb{R}
%EndExpansion
_{+},H_{0}^{1}\left( \Omega \right) \right) .
\end{equation*}%
Let us consider the energy at instant $t$ defined by%
\begin{equation*}
E_{u}\left( t\right) =\frac{1}{2}\int_{\Omega }\left( \left\vert \nabla
u\left( t,x\right) \right\vert ^{2}+\left\vert \partial _{t}u\left(
t,x\right) \right\vert ^{2}\right) dx.
\end{equation*}%
The energy functional satisfies the following identity%
\begin{equation}
E_{u}\left( T\right) +\int_{0}^{T}\int_{\Omega }a\left( x\right) \left\vert
\partial _{t}u\right\vert ^{r+1}dxdt=E_{u}\left( 0\right) ,
\label{energy inequality}
\end{equation}%
for every $T\geq 0$. Moreover we have%
\begin{equation}
\begin{array}{l}
\left\Vert \nabla \partial _{t}u\right\Vert _{L^{\infty }\left( 
%TCIMACRO{\U{211d} }%
%BeginExpansion
\mathbb{R}
%EndExpansion
_{+},L^{2}\left( \Omega \right) \right) }^{2}+\left\Vert \partial
_{t}^{2}u\right\Vert _{L^{\infty }\left( 
%TCIMACRO{\U{211d} }%
%BeginExpansion
\mathbb{R}
%EndExpansion
_{+},L^{2}\left( \Omega \right) \right) }^{2} \\ 
\leq 2\left( 1+\left\Vert a\right\Vert _{L^{\infty }}\right) \left(
\left\Vert u_{0}\right\Vert _{H^{2}}^{2}+\left\Vert u_{1}\right\Vert
_{H^{1}}^{2}+\left\Vert u_{1}\right\Vert _{H^{1}}^{2r}\right) .%
\end{array}
\label{hight energy inequality}
\end{equation}

Now we give a summary of results on the asymptotic behavior of the energy of
solutions of the nonlinear system $\left( \ref{system}\right) $ in the free
space $%
%TCIMACRO{\U{211d} }%
%BeginExpansion
\mathbb{R}
%EndExpansion
^{d}$ and for a globally distributed damping$.~$For the Klein Gordon
equation a polynomial decay rate is derived by Nakao \cite{nakao klein} for
compactly supported initial data and by Mochizuki and Motai \cite{MM} for
weighted initial data. More precisely they show that if $1<r<1+\frac{2}{d}$
the energy decays according to%
\begin{equation*}
E_{u}\left( t\right) \leq C\left( 1+t\right) ^{-\gamma },
\end{equation*}%
where $\gamma <\min \left( 1,\frac{2+d-dr}{r-1}\right) .$ If $r>1+\frac{2}{d}%
,$ Mochizuki and Motai \cite{MM} establishes a complementary non-decay
result for a dense set of initial data in $H^{1}\left( 
%TCIMACRO{\U{211d} }%
%BeginExpansion
\mathbb{R}
%EndExpansion
^{d}\right) \times L^{2}\left( 
%TCIMACRO{\U{211d} }%
%BeginExpansion
\mathbb{R}
%EndExpansion
^{d}\right) .$

For the wave equation we first quote the result of Ono \cite{ono 1}, in
which the author consider the wave equation with a damping term equal to $%
\partial _{t}u+g\left( \partial _{t}u\right) $ where $g$ superlinear and has
a polynomial growth. He showed the polynomial decay of the energy. We note
that in this case the $L^{2}$ norm of the time derivative on $%
%TCIMACRO{\U{211d} }%
%BeginExpansion
\mathbb{R}
%EndExpansion
_{+}\times 
%TCIMACRO{\U{211d} }%
%BeginExpansion
\mathbb{R}
%EndExpansion
^{d}$ of the solution is bounded by the energy of the initial data.
Mochizuki and Motai in \cite{MM} obtained a logarithmic decay rate when $%
1<r\leq 1+\frac{2}{d}$ and for a kind of weighted initial data. The
corresponding non-decay result in \cite{MM} requires $r>1+\frac{2}{d-1}.$
Todorova and Yordanov in \cite{todo yord} showed that for compactly
supported initial data there exists a positive constant $\tau $ such that $%
E_{u}\left( t\right) \leq C\left( 1+t\right) ^{-\tau },$ when $1<r\leq 1+%
\frac{2}{d+1}$ and $d\geq 3.$ The main idea in this paper is to use the
\textquotedblleft parabolic\textquotedblright\ effects coming from the
presence of the damping term. Recently, Wakasa and Yordanov in \cite{W
yordanov} studied the energy decay for dissipative nonlinear wave equations
in one space dimension. They established polynomial decay estimates for the
energy for compactly supported initial data. More explicetly they show that $%
E_{u}\left( t\right) \leq C\left( 1+t\right) ^{-\tau },$ when $1<r<3$ with $%
\tau <\min \left( \frac{1}{2},\frac{3-r}{r-1}\right) .$

In the case of exterior domain we mention the result of Nakao and Jung \cite%
{nakao jung} which consider a dissipation which is allowed to be nonlinear
only in a ball, but outside that ball the dissipation must be linear. For
the generalized Klein Gordon equation we quote the result of Nakao \cite%
{nakao 2011}.

For another type of total energy decay property we refer the reader to \cite%
{ikehata 1,kawa,Racke,nakao-ba,ono,ikehata2} and references therein.

Before introducing our results we shall state several assumptions:

\begin{description}
\item[Hyp A] There exists $L>r_{0}$ such that 
\begin{equation*}
a\left( x\right) \geq \epsilon _{0}>0\text{ for }\left\vert x\right\vert
\geq L.
\end{equation*}
\end{description}

\begin{definition}
Let $\omega $ be an open set of $\Omega .$

\begin{enumerate}
\item $\left( \omega ,T\right) $ geometrically controls $\Omega $, i.e.
every generalized geodesic travelling with speed $1$ and issued at $t=0$,
enters the set $\omega $ in a time $t<T$.

\item We say that $\omega $ satisfies GCC if there exists $T>0$ such that $%
\left( \omega ,T\right) $ geometrically controls $\Omega $.
\end{enumerate}
\end{definition}

This condition is called Geometric Control Condition (see e.g.\cite{blr} ).
We shall relate the open subset $\omega $ with the damper $a$ by%
\begin{equation*}
\omega \subset \left\{ x\in \Omega ;a\left( x\right) >\epsilon
_{0}>0\right\} .
\end{equation*}

We note that according to \cite{blr} and \cite{bg}\ the Geometric Control
Condition of Bardos et al is a necessary and sufficient condition for the
exponential decay of solutions of the wave equation in bounded domain.

In this paper, we deal with real solutions, the general case can be treated
in the same way. Throughout this paper we use the following notations%
\begin{equation*}
q\left( x\right) =\left( 1+\left\vert x\right\vert ^{2}\right) ^{\frac{1}{2}%
},\text{ for }x\in \Omega .
\end{equation*}

and%
\begin{equation*}
p=\left\{ 
\begin{array}{ll}
2\left( r+1\right) & \text{if }d\leq 3 \\ 
\frac{2d}{d-2} & \text{if }d\geq 4.%
\end{array}%
\right.
\end{equation*}%
Now we state the results of this paper.

\begin{theorem}
We assume that Hyp A holds and $\omega $ satisfies GCC. Let%
\begin{equation*}
\begin{array}{ll}
\gamma >0 & \text{if }1<r<1+\frac{2}{d} \\ 
0<\gamma <\frac{2}{r-1} & \text{if }r=1+\frac{2}{d}.%
\end{array}%
\end{equation*}%
Then there exists $C_{0}>0$ such that the following estimate%
\begin{equation*}
E_{u}\left( t\right) \leq C_{0}\left( \ln \left( 2+t\right) \right)
^{-\gamma }I_{0},\text{ for all }t\geq 0,
\end{equation*}%
holds for every solution $u$ of $\left( \ref{system}\right) $ with initial
data $\left( u_{0},u_{1}\right) $ in $H_{0}^{1}\left( \Omega \right) \cap
H^{2}\left( \Omega \right) \times H_{0}^{1}\left( \Omega \right) $, such that%
\begin{equation*}
\begin{array}{l}
\left\Vert \left( \ln \left( 1+q\right) \right) ^{\frac{\gamma }{2}}\nabla
u_{0}\right\Vert _{L^{2}}^{2}+\left\Vert \left( \ln \left( 1+q\right)
\right) ^{\frac{\gamma }{2}}u_{1}\right\Vert _{L^{2}}^{2}<+\infty ,%
\end{array}%
\end{equation*}%
where%
\begin{equation*}
\begin{array}{l}
I_{0}=\left\Vert u_{0}\right\Vert _{H^{2}}^{2}+\left\Vert u_{1}\right\Vert
_{H^{1}}^{2}+\left\Vert u_{1}\right\Vert _{H^{1}}^{2r}+\left\Vert
u_{0}\right\Vert _{L^{r+1}}^{r+1}+\left\Vert \left( \ln \left( 1+q\right)
\right) ^{\frac{\gamma }{2}}\nabla u_{0}\right\Vert _{L^{2}}^{2} \\ 
+\left\Vert \left( \ln \left( 1+q\right) \right) ^{\frac{\gamma }{2}%
}u_{1}\right\Vert _{L^{2}}^{2}+\left( \left\Vert u_{0}\right\Vert
_{H^{2}}^{2}+\left\Vert u_{1}\right\Vert _{H^{1}}^{2}+\left\Vert
u_{1}\right\Vert _{H^{1}}^{2r}\right) ^{\frac{p}{2}}+1.%
\end{array}%
\end{equation*}
\end{theorem}

In the result above we see that when $1<r<1+\frac{2}{d},$ we can take any $%
\gamma >0,$ so we expect that we can obtain a rate of decay of the energy
for a weight with a polynomial growth.

\begin{theorem}
We assume that Hyp A holds and $\omega $ satisfies GCC. We suppose that $%
1<r<1+\frac{2}{d}$. We set 
\begin{equation*}
\begin{array}{c}
\tau \left( r,\lambda \right) =\frac{r\delta _{0}^{r-1}\left( \lambda
+1\right) ^{r-1}\left( r+1\right) ^{r}}{1+\delta _{0}^{r-1}\left( \lambda
+1\right) ^{r-1}\left( r+1\right) ^{r}\left( r\delta _{0}^{\frac{r-1}{r}%
}\left( \lambda +1\right) \left( r+1\right) +1\right) \allowbreak },%
\end{array}%
\end{equation*}%
$\lambda $ any positive constant$~$and 
\begin{equation*}
\delta _{0}=\left( \lambda +1\right) ^{\frac{r^{2}}{r^{2}-1}}\left(
r+1\right) ^{-\frac{r}{r-1}}.
\end{equation*}

We take 
\begin{equation*}
\begin{array}{c}
\text{ }\gamma <\min \left( \tau \left( r,\lambda \right) ,\frac{d+2-dr}{r-1}%
,\frac{p-2r}{r-1}\right) ,%
\end{array}%
\end{equation*}%
and 
\begin{equation*}
\begin{array}{c}
\alpha \left( r,\lambda \right) =\frac{r\delta _{0}^{\frac{r^{2}-1}{r}%
}\left( 1+\lambda \right) ^{r}\left( r+1\right) ^{r+1}+1}{\delta
_{0}^{r}\left( r-\tau \right) \left( 1+\lambda \right) ^{r}\left( r+1\right)
^{r+1}}.%
\end{array}%
\end{equation*}%
Then there exists $C_{1}>0$ such that the following estimate%
\begin{equation*}
E_{u}\left( t\right) \leq C_{1}\left( 1+\alpha t\right) ^{-\gamma }I_{1},%
\text{ for all }t\geq 0,
\end{equation*}%
holds for every solution $u$ of $\left( \ref{system}\right) $ with initial
data $\left( u_{0},u_{1}\right) $ in $H_{0}^{1}\left( \Omega \right) \cap
H^{2}\left( \Omega \right) \times H_{0}^{1}\left( \Omega \right) $, such that%
\begin{equation*}
\begin{array}{c}
\left\Vert \left( 1+\alpha q\right) ^{\frac{\gamma }{2}}\nabla
u_{0}\right\Vert _{L^{2}}^{2}+\left\Vert \left( 1+\alpha q\right) ^{\frac{%
\gamma }{2}}u_{1}\right\Vert _{L^{2}}^{2}<+\infty ,%
\end{array}%
\end{equation*}%
where%
\begin{equation*}
\begin{array}{l}
I_{1}=\left\Vert u_{0}\right\Vert _{H^{2}}^{2}+\left\Vert u_{1}\right\Vert
_{H^{1}}^{2}+\left\Vert u_{1}\right\Vert _{H^{1}}^{2r}+\left\Vert
u_{0}\right\Vert _{L^{r+1}}^{r+1}+\left\Vert \left( 1+\alpha q\right) ^{%
\frac{\gamma }{2}}\nabla u_{0}\right\Vert _{L^{2}}^{2} \\ 
+\left\Vert \left( 1+\alpha q\right) ^{\frac{\gamma }{2}}u_{1}\right\Vert
_{L^{2}}^{2}+\left( \left\Vert u_{0}\right\Vert _{H^{2}}^{2}+\left\Vert
u_{1}\right\Vert _{H^{1}}^{2}+\left\Vert u_{1}\right\Vert
_{H^{1}}^{2r}\right) ^{\frac{p}{2}}+1.%
\end{array}%
\end{equation*}
\end{theorem}

\begin{remark}
\begin{enumerate}
\item We note that for a fixed $r,$ the best value of \ $\tau \left(
r,\lambda \right) $ is obtained when $\lambda $ goes to zero. In addition
the function $r\longmapsto \tau \left( r,0\right) =\frac{r}{r+2}$ is
increasing on $(1,3],$%
\begin{equation*}
\text{and }\underset{r\rightarrow 1}{\lim }\tau \left( r,0\right) =\frac{1}{3%
}\text{ }.
\end{equation*}%
This fact is natural, since the value of $\tau $ is essentially linked to
the fact that 
\begin{equation*}
\int_{0}^{\infty }\dint_{\Omega }a\left( x\right) \left( 1+\alpha \left(
q\left( x\right) +t\right) \right) ^{\gamma -r-1}\left\vert u\left(
t,x\right) \right\vert ^{r+1}dx
\end{equation*}%
is finite and we cannot expect to obtain a better value of $\tau $ when $r$
decrease.

\item The best rate of decay is obtained, when we take%
\begin{equation*}
\gamma <\min \left( \frac{r}{r+2},\frac{d+2-dr}{r-1},\frac{p-2r}{r-1}\right)
.
\end{equation*}

\item We remark that the function $r\longmapsto \alpha \left( r,0\right) $
is decreasing on $(1,3]$, $\underset{r\rightarrow 1}{\lim }\alpha \left(
r,0\right) =\infty $ and $\alpha \left( 3,0\right) \geq \frac{10}{3}.$
\end{enumerate}
\end{remark}

The case of initial data with compact support

\begin{theorem}
We assume that Hyp A holds and $\omega $ satisfies GCC. We suppose that $%
1<r<1+\frac{2}{d}$. We set 
\begin{equation*}
\begin{array}{c}
\tau _{1}\left( r,\lambda \right) =\frac{2r\delta _{0}^{r-1}\left( \lambda
+1\right) ^{r-1}\left( r+1\right) ^{r}}{1+\delta _{0}^{r-1}\left( \lambda
+1\right) ^{r-1}\left( r+1\right) ^{r}\left( r\delta _{0}^{\frac{r-1}{r}%
}\left( \lambda +1\right) \left( r+1\right) +2\right) \allowbreak },%
\end{array}%
\end{equation*}%
$\lambda $ any positive constant$~$and 
\begin{equation*}
\delta _{0}=\left( \lambda +1\right) ^{\frac{r^{2}}{r^{2}-1}}\left(
r+1\right) ^{-\frac{r}{r-1}}.
\end{equation*}

We take 
\begin{equation*}
\begin{array}{c}
\gamma <\min \left( \tau _{1}\left( r,\lambda \right) ,\frac{d+2-dr}{r-1},%
\frac{p-2r}{r-1}\right) ,%
\end{array}%
\end{equation*}%
and 
\begin{equation*}
\begin{array}{c}
\alpha \left( r,\lambda \right) =\frac{r\delta _{0}^{\frac{r^{2}-1}{r}%
}\left( 1+\lambda \right) ^{r}\left( r+1\right) ^{r+1}+1}{\delta
_{0}^{r}\left( r-\tau _{1}\right) \left( 1+\lambda \right) ^{r}\left(
r+1\right) ^{r+1}}.%
\end{array}%
\end{equation*}%
Then there exists $C_{1}>0$ such that the following estimate%
\begin{equation*}
E_{u}\left( t\right) \leq C_{1}\left( \frac{R}{R+\alpha t}\right) ^{\gamma
}I_{2},\text{ for all }t\geq 0,
\end{equation*}%
holds for every solution $u$ of $\left( \ref{system}\right) $ with initial
data $\left( u_{0},u_{1}\right) $ in $H_{0}^{1}\left( \Omega \right) \cap
H^{2}\left( \Omega \right) \times H_{0}^{1}\left( \Omega \right) $ such that
the support of the initial data is contained in $B_{R}$, where%
\begin{equation*}
\begin{array}{l}
I_{2}=\left\Vert u_{0}\right\Vert _{H^{2}}^{2}+\left\Vert u_{1}\right\Vert
_{H^{1}}^{2}+\left\Vert u_{1}\right\Vert _{H^{1}}^{2r}+\left\Vert
u_{0}\right\Vert _{L^{r+1}}^{r+1} \\ 
+\left( \left\Vert u_{0}\right\Vert _{H^{2}}^{2}+\left\Vert u_{1}\right\Vert
_{H^{1}}^{2}+\left\Vert u_{1}\right\Vert _{H^{1}}^{2r}\right) ^{\frac{p}{2}%
}+1.%
\end{array}%
\end{equation*}
\end{theorem}

\begin{remark}
\begin{enumerate}
\item Our results are also valid for the case $\Omega =\mathbb{R}^{d},$ $%
d\geq 3$, where the boundary condition is dropped.

\item We note that for a fixed $r,$ the best value of \ $\tau _{1}\left(
r,\lambda \right) $ is obtained when $\lambda $ goes to zero. In addition
the function $r\longmapsto \tau _{1}\left( r,0\right) =\frac{2r}{r+3}$ is
increasing on $(1,3],$%
\begin{equation*}
\text{and }\underset{r\rightarrow 1}{\lim }\tau _{1}\left( r,0\right) =\frac{%
1}{2}.
\end{equation*}%
This fact is natural, since the value of $\tau _{1}$ is essentially linked
to the fact that 
\begin{equation*}
\int_{0}^{\infty }\dint_{\Omega }a\left( x\right) \left( R+\alpha t\right)
^{\gamma -r-1}\left\vert u\left( t,x\right) \right\vert ^{r+1}dx
\end{equation*}%
is finite, which depends on the behavior of 
\begin{equation*}
\dint_{\Omega }a\left( x\right) \left\vert u\left( t,x\right) \right\vert
^{r+1}dx,
\end{equation*}%
therefore we cannot expect to obtain a better value of $\tau _{1}$ when $r$
decrease.

\item The best rate of decay is obtained, when we take%
\begin{equation*}
\gamma <\min \left( \frac{2r}{r+3},\frac{d+2-dr}{r-1},\frac{p-2r}{r-1}%
\right) .
\end{equation*}%
When $d=1,$ we obtain that 
\begin{equation*}
\begin{array}{cc}
1/2<\gamma <\frac{2r}{r+3} & \text{if }1<r\leq \frac{2}{3}\sqrt{7}+\frac{1}{3%
} \\ 
\gamma <\frac{3-r}{r-1} & \text{if }\frac{2}{3}\sqrt{7}+\frac{1}{3}<r<3.%
\end{array}%
\end{equation*}%
Our decay rates is better or equal than the one obtained by Wakasa and
Yordanov in \cite{W yordanov}$.$

\item We remark that the function $r\longmapsto \alpha \left( r,0\right) $
is decreasing on $(1,3]$, $\underset{r\rightarrow 1}{\lim }\alpha \left(
r,0\right) =\infty $ and $\alpha \left( 3,0\right) \geq 18.$ In addition we
have%
\begin{equation*}
\alpha \left( r,\lambda \right) \geq \alpha \left( r,0\right) ,\text{ for
all }\left( r,\lambda \right) \in (1,3]\times 
%TCIMACRO{\U{211d} }%
%BeginExpansion
\mathbb{R}
%EndExpansion
_{+}^{\ast },
\end{equation*}%
so the case $r=1$ cannot be obtained by letting $r$ goes to $1.$ 
\end{enumerate}
\end{remark}

The main difficulty in establishing such results is the lack of control of
the $L^{2}$ norm of the solution. This is an essential difference with the
equation in a bounded domain or the Klein-Gordon equation or in the case of
unbounded domain with finite measure \cite{caval}. The other difficulties is
that the $L^{2}$ norm of the time derivative on $%
%TCIMACRO{\U{211d} }%
%BeginExpansion
\mathbb{R}
%EndExpansion
_{+}\times \Omega $ is not controlled by the initial energy and the fact
that the domain is with infinite measure.

To prove our results it is sufficient to show the integrability of $\varphi
^{\prime }E_{u}$ over $(0,\infty )$. For this purpose we show an estimate on
a functional $X(t)$ which control the weighted energy functional (see for
example \cite{Daou linear} and \cite{Daou coupled} for similar idea). Also
we prove a weighted observability estimate for the local energy of solutions
the wave equation with external force.

The rest of the paper is organized as follows. In section 2 we present some
results on the weighted energy and we give a weighted observability estimate
for the local energy. The section 3 is devoted to the proof of theorem 1 and
in section 4 \ we give the proof of theorem 2. In the last section we give
the needed results to show the theorem 3.

\section{Weighted observability estimate}

The next result concern the weighted energy estimate for solutions of $%
\left( \ref{system}\right) $ with initial data with finite weighted energy.

\begin{proposition}
\label{proposition weighted energy}Let $\varphi $ be a positive function in $%
C^{2}\left( 
%TCIMACRO{\U{211d} }%
%BeginExpansion
\mathbb{R}
%EndExpansion
_{+}\right) $ such that $\varphi ^{\prime }$ and $\varphi ^{\prime \prime }$
are in $L^{\infty }\left( 
%TCIMACRO{\U{211d} }%
%BeginExpansion
\mathbb{R}
%EndExpansion
_{+}\right) .$ Let $u$ be a solution of $\left( \ref{system}\right) $ with
initial data $\left( u_{0},u_{1}\right) $ in $H_{0}^{1}\left( \Omega \right)
\cap H^{2}\left( \Omega \right) \times H_{0}^{1}\left( \Omega \right) .$ We
set 
\begin{equation}
E_{\varphi }\left( u\right) \left( t\right) =\frac{1}{2}\int_{\Omega
}\varphi \left( \eta q\left( x\right) +\alpha t\right) \left( \left\vert
\nabla u\right\vert ^{2}+\left\vert \partial _{t}u\right\vert ^{2}\right) dx.
\label{weighted energy definition}
\end{equation}%
If $E_{\varphi }\left( u\right) \left( 0\right) $ $<\infty $, then%
\begin{equation}
\sqrt{\varphi }\nabla u\in L_{loc}^{\infty }\left( 
%TCIMACRO{\U{211d} }%
%BeginExpansion
\mathbb{R}
%EndExpansion
_{+},\left( L^{2}\left( \Omega \right) \right) ^{d}\right) \text{ and }\sqrt{%
\varphi }\partial _{t}u\in L_{loc}^{\infty }\left( 
%TCIMACRO{\U{211d} }%
%BeginExpansion
\mathbb{R}
%EndExpansion
_{+},L^{2}\left( \Omega \right) \right) .  \label{weighted regularity}
\end{equation}%
Moreover, we have%
\begin{equation}
\begin{array}{l}
\medskip E_{\varphi }\left( u\right) \left( t+T\right)
+\dint_{t}^{t+T}\dint_{\Omega }a\left( x\right) \varphi \left( s,x\right)
\left\vert \partial _{t}u\right\vert ^{r+1}dxds \\ 
\leq E_{\varphi }\left( u\right) \left( t\right) +\frac{\alpha +\eta }{2}%
\dint_{t}^{t+T}\dint_{\Omega }\left\vert \varphi ^{\prime }\left( s,x\right)
\right\vert \left( \left\vert \nabla u\left( s\right) \right\vert
^{2}+\left\vert \partial _{t}u\left( s\right) \right\vert ^{2}\right) dxds.%
\end{array}
\label{weighted energy}
\end{equation}%
for every $t\geq 0$ and $T\geq 0$, where $\varphi ^{\left( j\right) }\left(
t,x\right) =\varphi ^{\left( j\right) }\left( \eta q\left( x\right) +\alpha
t\right) ,$ for $j=0,1,2$ and $\alpha ,\eta \geq 0.$
\end{proposition}

\begin{proof}
The first step is to show $\left( \ref{weighted regularity}\right) .$ Let $%
n\in 
%TCIMACRO{\U{2115} }%
%BeginExpansion
\mathbb{N}
%EndExpansion
^{\ast }.$ We define 
\begin{equation*}
g_{n}\left( s\right) =g\circ \left( I+n^{-1}g\right) ^{-1}\left( s\right)
=n\left( s-\left( I+n^{-1}g\right) ^{-1}\left( s\right) \right) ,
\end{equation*}%
the Yosida approximation of $g:s\longmapsto \left\vert s\right\vert ^{r-1}s.$
$g_{n}$ is monotone increasing, globally Lipschitz and $g_{n}\left( 0\right)
=0$. Let $w$ be the solution of the following system%
\begin{equation}
\left\{ 
\begin{array}{ll}
\partial _{t}^{2}w-\Delta w+a\left( x\right) \left( 1+\varphi \right) ^{%
\frac{1}{2}}g_{n}\left( \left( 1+\varphi \right) ^{-\frac{1}{2}}\partial
_{t}w+h\left( t,x\right) \right) =f\left( t,x\right) & 
%TCIMACRO{\U{211d} }%
%BeginExpansion
\mathbb{R}
%EndExpansion
_{+}\times \Omega \\ 
w=0 & 
%TCIMACRO{\U{211d} }%
%BeginExpansion
\mathbb{R}
%EndExpansion
_{+}\times \partial \Omega \\ 
\left( w\left( 0\right) ,\partial _{t}w\left( 0\right) \right) =\left(
w_{0},w_{1}\right) \in H_{D}\left( \Omega \right) \times L^{2}\left( \Omega
\right) & 
\end{array}%
\right.  \label{proof energy weighted system}
\end{equation}%
with $f\in L_{loc}^{2}\left( 
%TCIMACRO{\U{211d} }%
%BeginExpansion
\mathbb{R}
%EndExpansion
_{+},L^{2}\left( \Omega \right) \right) $ and $\left( 1+\varphi \right) ^{%
\frac{1}{2}}h\in L_{loc}^{2}\left( 
%TCIMACRO{\U{211d} }%
%BeginExpansion
\mathbb{R}
%EndExpansion
_{+},L^{2}\left( \Omega \right) \right) ,$ where $H_{D}\left( \Omega \right) 
$\ the completion of $C_{c}^{\infty }\left( \Omega \right) $\ with respect
to the norm%
\begin{equation*}
\left\Vert \varphi _{0}\right\Vert _{H_{D}}^{2}=\dint_{\Omega }\left\vert
\nabla \varphi _{0}\right\vert ^{2}dx.
\end{equation*}%
$g_{n}$ is a global Lipschitz. function, therefore 
\begin{equation*}
a\left( x\right) \left( 1+\varphi \right) ^{\frac{1}{2}}g_{n}\left( \left(
1+\varphi \right) ^{-\frac{1}{2}}\partial _{t}w+h\right) \in
L_{loc}^{2}\left( 
%TCIMACRO{\U{211d} }%
%BeginExpansion
\mathbb{R}
%EndExpansion
_{+},L^{2}\left( \Omega \right) \right) .
\end{equation*}%
Using the fact that the function $f\in L_{loc}^{2}\left( 
%TCIMACRO{\U{211d} }%
%BeginExpansion
\mathbb{R}
%EndExpansion
_{+},L^{2}\left( \Omega \right) \right) ,$ we infer that the unique solution
of $\left( \ref{proof energy weighted system}\right) $ 
\begin{equation*}
w\in C\left( 
%TCIMACRO{\U{211d} }%
%BeginExpansion
\mathbb{R}
%EndExpansion
_{+},H_{D}\left( \Omega \right) \right) \text{ and }\partial _{t}w\in
C\left( 
%TCIMACRO{\U{211d} }%
%BeginExpansion
\mathbb{R}
%EndExpansion
_{+},L^{2}\left( \Omega \right) \right) ,
\end{equation*}%
and the following energy identity%
\begin{eqnarray}
&&E_{w}\left( t\right) +\int_{0}^{t}\dint_{\Omega }a\left( x\right) \left(
1+\varphi \right) ^{\frac{1}{2}}g_{n}\left( \left( 1+\varphi \right) ^{-%
\frac{1}{2}}\partial _{t}w+h\left( s,x\right) \right) \partial _{t}wdxds 
\notag \\
&=&E_{w}\left( 0\right) +\int_{0}^{t}\int_{\Omega }f\left( s,x\right)
\partial _{t}wdxds  \label{proof weighted energy estimate energy identity}
\end{eqnarray}%
holds for every $t\geq 0.$

Let $u_{n}$ be the solution of the following system%
\begin{equation}
\left\{ 
\begin{array}{ll}
\partial _{t}^{2}u_{n}-\Delta u_{n}+a\left( x\right) g_{n}\left( \partial
_{t}u_{n}\right) =0 & 
%TCIMACRO{\U{211d} }%
%BeginExpansion
\mathbb{R}
%EndExpansion
_{+}\times \Omega \\ 
u_{n}=0 & 
%TCIMACRO{\U{211d} }%
%BeginExpansion
\mathbb{R}
%EndExpansion
_{+}\times \partial \Omega \\ 
\left( u_{n}\left( 0\right) ,\partial _{t}u_{n}\left( 0\right) \right)
=\left( u_{0},u_{1}\right) & 
\end{array}%
\right.  \label{sys: u mu}
\end{equation}%
with $\left( u_{0},u_{1}\right) $ in $H_{0}^{1}\left( \Omega \right) \cap
H^{2}\left( \Omega \right) \times H_{0}^{1}\left( \Omega \right) $ such that 
\begin{equation}
\dint_{\Omega }\varphi \left( \eta q\left( x\right) \right) \left(
\left\vert \nabla u_{0}\right\vert ^{2}+\left\vert u_{1}\right\vert
^{2}\right) dx<\infty .  \label{initial data regularity}
\end{equation}%
The function $g_{n}$ is globally Lipschitz., hence%
\begin{equation*}
u_{n}\in L^{\infty }\left( \left[ 0,T\right] ,H_{0}^{1}\left( \Omega \right)
\cap H^{2}\left( \Omega \right) \right) \cap W^{1,\infty }\left( \left[ 0,T%
\right] ,H_{0}^{1}\left( \Omega \right) \right) ,
\end{equation*}%
moreover we have the following energy identity 
\begin{equation}
E_{u_{n}}\left( t\right) +\int_{0}^{t}\dint_{\Omega }a\left( x\right)
g_{n}\left( \partial _{t}u_{n}\right) \partial _{t}u_{n}dxds=E_{u_{n}}\left(
0\right) .  \label{proof weighted energy u energy identity}
\end{equation}%
In addition we have 
\begin{equation}
\begin{array}{l}
\left\Vert \Delta u_{n}\right\Vert _{L^{\infty }\left( 
%TCIMACRO{\U{211d} }%
%BeginExpansion
\mathbb{R}
%EndExpansion
_{+},L^{2}\left( \Omega \right) \right) }^{2}+\left\Vert \nabla \partial
_{t}u_{n}\right\Vert _{L^{\infty }\left( 
%TCIMACRO{\U{211d} }%
%BeginExpansion
\mathbb{R}
%EndExpansion
_{+},L^{2}\left( \Omega \right) \right) }^{2}+\left\Vert \partial
_{t}^{2}u_{n}\right\Vert _{L^{\infty }\left( 
%TCIMACRO{\U{211d} }%
%BeginExpansion
\mathbb{R}
%EndExpansion
_{+},L^{2}\left( \Omega \right) \right) }^{2} \\ 
\leq 2\left( 1+\left\Vert a\right\Vert _{L^{\infty }}\right) \left(
\left\Vert u_{0}\right\Vert _{H^{2}}^{2}+\left\Vert u_{1}\right\Vert
_{H^{1}}^{2}\right) .%
\end{array}
\label{proof weighted energy dtu energy identity}
\end{equation}%
From $\left( \ref{proof weighted energy u energy identity}\right) $ and $%
\left( \ref{proof weighted energy dtu energy identity}\right) ,$ we infer
that there exists $u$ \ and $\psi $ in $L^{\frac{r+1}{r}}\left( \left(
0,T\right) \times \Omega ,a\right) $\ such that 
\begin{equation}
\begin{array}{l}
u_{n}\underset{n\rightarrow +\infty }{\longrightarrow }u\text{ in the weak
star topology of }L^{\infty }\left( \left[ 0,T\right] ,H_{0}^{1}\left(
\Omega \right) \cap H^{2}\left( \Omega \right) \right) \\ 
\partial _{t}u_{n}\underset{n\rightarrow +\infty }{\longrightarrow }\partial
_{t}u\text{ in the weak star topology of }L^{\infty }\left( \left[ 0,T\right]
,H_{0}^{1}\left( \Omega \right) \right) \\ 
\left( I+n^{-1}g\right) ^{-1}\left( \partial _{t}u_{n}\right) \underset{%
n\rightarrow +\infty }{\longrightarrow }\partial _{t}u\text{ in the weak
topology of }L^{r+1}\left( \left( 0,T\right) \times \Omega ,a\right) \\ 
g_{n}\left( \partial _{t}u_{n}\right) \underset{n\rightarrow +\infty }{%
\longrightarrow }\psi \text{ in the weak topology of }L^{\frac{r+1}{r}%
}\left( \left( 0,T\right) \times \Omega ,a\right) ,%
\end{array}
\label{weak convergence un}
\end{equation}%
where 
\begin{equation*}
L^{r+1}\left( \left( 0,T\right) \times \Omega ,a\right) =\left\{ \varkappa ~%
\text{such that }\int_{0}^{T}\dint_{\Omega }\left\vert \varkappa \left(
s,x\right) \right\vert ^{r+1}a\left( x\right) dxds<\infty \right\} .
\end{equation*}%
To show that, $\psi =g\left( \partial _{t}u\right) ,$ we proceed as in \cite[%
P55-56]{lions straus}$.$ By a classical compactness argument, we can show
that there exists a subsequence of $\left( u_{n}\right) $ still denoted by $%
\left( u_{n}\right) ,$ such that%
\begin{equation}
\partial _{t}u_{n}\underset{n\rightarrow +\infty }{\longrightarrow }\partial
_{t}u\text{ strongly in }L^{2}\left( K\right) ,
\label{Strong convergence u n}
\end{equation}%
for a given compact subset $K$\ of $\left( 0,T\right) \times \Omega .$
Therefore we can assume that%
\begin{equation}
\partial _{t}u_{n}\underset{n\rightarrow +\infty }{\longrightarrow }\partial
_{t}u,\text{ a.e. in }K.  \label{Strong convergence u n consequence}
\end{equation}%
Since the function $%
\begin{array}{l}
s\longmapsto \left( I+n^{-1}g\right) ^{-1}\left( s\right) ,%
\end{array}%
$ is non-expansive on $%
%TCIMACRO{\U{211d} }%
%BeginExpansion
\mathbb{R}
%EndExpansion
,$ we obtain%
\begin{equation*}
\left( I+n^{-1}g\right) ^{-1}\left( \partial _{t}u_{n}\right) \underset{%
n\rightarrow +\infty }{\longrightarrow }\partial _{t}u,\text{ a.e. in }K.
\end{equation*}%
Hence%
\begin{equation*}
g_{n}\left( \partial _{t}u_{n}\right) \underset{n\rightarrow +\infty }{%
\longrightarrow }g\left( \partial _{t}u\right) ,\text{ a.e. in }K.
\end{equation*}%
This enough to gives $\psi =g\left( \partial _{t}u\right) .$ Therefore $u$
is a solution of $\left( \ref{system}\right) $ with initial data in $%
H_{0}^{1}\left( \Omega \right) \cap H^{2}\left( \Omega \right) \times
H_{0}^{1}\left( \Omega \right) $ such that%
\begin{equation*}
\dint_{\Omega }\varphi \left( \eta q\left( x\right) \right) \left(
\left\vert \nabla u_{0}\right\vert ^{2}+\left\vert u_{1}\right\vert
^{2}\right) dx<\infty .
\end{equation*}%
We set $v_{n}=\left( 1+\varphi \right) ^{\frac{1}{2}}u_{n}.$ Therefore $%
v_{n} $ satisfies%
\begin{equation}
\left\{ 
\begin{array}{ll}
\partial _{t}^{2}v_{n}-\Delta v_{n}+a\left( x\right) \left( 1+\varphi
\right) ^{\frac{1}{2}}g_{n}\left( \left( 1+\varphi \right) ^{-\frac{1}{2}%
}\partial _{t}v_{n}+h_{n}\left( t,x\right) \right) =f\left( t,x\right) & 
%TCIMACRO{\U{211d} }%
%BeginExpansion
\mathbb{R}
%EndExpansion
_{+}\times \Omega \\ 
v_{n}=0 & 
%TCIMACRO{\U{211d} }%
%BeginExpansion
\mathbb{R}
%EndExpansion
_{+}\times \partial \Omega \\ 
\left( v_{n}\left( 0\right) ,\partial _{t}v_{n}\left( 0\right) \right)
=\left( v_{0},v_{1}\right) & 
\end{array}%
\right.
\end{equation}%
with%
\begin{equation*}
\begin{array}{l}
\left( v_{0},v_{1}\right) =\left( \left( 1+\varphi \left( \eta q\left(
x\right) \right) \right) ^{\frac{1}{2}}u_{0},\frac{1}{2}\left( 1+\varphi
\left( \eta q\left( x\right) \right) \right) ^{-\frac{1}{2}}\varphi ^{\prime
}\left( \eta q\left( x\right) \right) \eta u_{0}+\left( 1+\varphi \left(
\eta q\left( x\right) \right) \right) ^{\frac{1}{2}}u_{1}\right) \\ 
h_{n}=-\frac{\alpha }{2}\left( 1+\varphi \right) ^{-1}\varphi ^{\prime }u_{n}%
\end{array}%
\end{equation*}%
and%
\begin{equation*}
\begin{array}{l}
f=\frac{1}{2}\left( 1+\varphi \right) ^{-\frac{1}{2}}\left[ \eta ^{2}\left(
\varphi ^{\prime \prime }-\frac{1}{2}\left( \varphi ^{\prime }\right)
^{2}\left( 1+\varphi \right) ^{-1}\right) \frac{\left\vert x\right\vert ^{2}%
}{q^{2}}+\eta \left( \frac{d}{q}-\frac{\left\vert x\right\vert ^{2}}{q^{3}}%
\right) \varphi ^{\prime }\right] u_{n} \\ 
+\frac{\alpha ^{2}}{2}\left( 1+\varphi \right) ^{-\frac{1}{2}}\left[ \varphi
^{\prime \prime }-\frac{1}{2}\left( \varphi ^{\prime }\right) ^{2}\left(
1+\varphi \right) ^{-1}\right] u_{n}+\varphi ^{\prime }\left( 1+\varphi
\right) ^{-\frac{1}{2}}\left( \alpha \partial _{t}u_{n}+\eta \frac{x\cdot
\nabla u_{n}}{q}\right) .%
\end{array}%
\end{equation*}%
Hence, recalling $\left( \ref{initial data regularity}\right) $, $\varphi
^{\prime }\in L^{\infty }\left( 
%TCIMACRO{\U{211d} }%
%BeginExpansion
\mathbb{R}
%EndExpansion
_{+}\right) $ and $\varphi ^{\prime \prime }\in L^{\infty }\left( 
%TCIMACRO{\U{211d} }%
%BeginExpansion
\mathbb{R}
%EndExpansion
_{+}\right) $%
\begin{equation*}
\begin{array}{l}
\left( v_{0},v_{1}\right) \in H_{D}\left( \Omega \right) \times L^{2}\left(
\Omega \right) \\ 
\left( 1+\varphi \right) ^{\frac{1}{2}}h_{n}\in L_{loc}^{2}\left( 
%TCIMACRO{\U{211d} }%
%BeginExpansion
\mathbb{R}
%EndExpansion
_{+},L^{2}\left( \Omega \right) \right) \\ 
f\in L_{loc}^{2}\left( 
%TCIMACRO{\U{211d} }%
%BeginExpansion
\mathbb{R}
%EndExpansion
_{+},L^{2}\left( \Omega \right) \right) .%
\end{array}%
\end{equation*}%
Therefore using $\left( \ref{proof weighted energy estimate energy identity}%
\right) $ along with%
\begin{equation*}
\left( 1+\varphi \right) ^{-\frac{1}{2}}\partial _{t}v_{n}-\frac{\alpha }{2}%
\left( 1+\varphi \right) ^{-1}\varphi ^{\prime }u_{n}=\partial _{t}u_{n},
\end{equation*}%
and making some arrangement, we deduce that%
\begin{equation}
\begin{array}{l}
\medskip E_{v_{n}}\left( t\right) +\dint_{0}^{t}\dint_{\Omega }a\left(
x\right) \left( 1+\varphi \right) g_{n}\left( \partial _{t}u_{n}\right)
\partial _{t}u_{n}dxds \\ 
\medskip =E_{v_{n}}\left( 0\right) +\dint_{0}^{t}\dint_{\Omega }\left(
1+\varphi \right) ^{\frac{1}{2}}f\left( s,x\right) \partial _{t}u_{n}dxds+%
\frac{\alpha }{2}\dint_{0}^{t}\dint_{\Omega }\left( 1+\varphi \right) ^{-%
\frac{1}{2}}f\left( s,x\right) \varphi ^{\prime }u_{n}dxds \\ 
-\frac{\alpha }{2}\dint_{0}^{t}\dint_{\Omega }a\left( x\right) \varphi
^{\prime }g_{n}\left( \partial _{t}u_{n}\right) u_{n}dxds.%
\end{array}
\label{proof energy weighted energy identity 1}
\end{equation}%
On the other hand, since $\varphi ^{\prime }\in L^{\infty }\left( 
%TCIMACRO{\U{211d} }%
%BeginExpansion
\mathbb{R}
%EndExpansion
_{+}\right) $ and $\varphi ^{\prime \prime }\in L^{\infty }\left( 
%TCIMACRO{\U{211d} }%
%BeginExpansion
\mathbb{R}
%EndExpansion
_{+}\right) $\ then there exists a positive constant $C=C\left( \varphi
\right) $ such that%
\begin{equation*}
\begin{array}{l}
\medskip \left\vert \dint_{0}^{t}\dint_{\Omega }\left( 1+\varphi \right) ^{%
\frac{1}{2}}f\left( s,x\right) \partial _{t}u_{n}dxds\right\vert \leq
C\dint_{0}^{t}\dint_{\Omega }\left\vert u_{n}\right\vert ^{2}+\left\vert
\partial _{t}u_{n}\right\vert ^{2}+\left\vert \nabla u_{n}\right\vert
^{2}dxds, \\ 
\medskip \left\vert \frac{\alpha }{2}\dint_{0}^{t}\dint_{\Omega }\left(
1+\varphi \right) ^{-\frac{1}{2}}f\left( s,x\right) \varphi ^{\prime
}u_{n}dxds\right\vert \leq C\dint_{0}^{t}\dint_{\Omega }\left\vert
u_{n}\right\vert ^{2}+\left\vert \partial _{t}u_{n}\right\vert
^{2}+\left\vert \nabla u_{n}\right\vert ^{2}dxds.%
\end{array}%
\end{equation*}%
To estimate the last term of the RHS of $\left( \ref{proof energy weighted
energy identity 1}\right) ,$ we use Young's inequality along with the fact
that $g(s)=\left\vert s\right\vert ^{r-1}s$\ 
\begin{equation*}
\left\vert \frac{\alpha }{2}\int_{0}^{t}\dint_{\Omega }a\left( x\right)
\varphi ^{\prime }g_{n}\left( \partial _{t}u_{n}\right) u_{n}dxds\right\vert
\leq C\int_{0}^{t}\dint_{\Omega }a\left( x\right) \left\vert
u_{n}\right\vert ^{r+1}+a\left( x\right) \left\vert \left( I+n^{-1}g\right)
^{-1}\left( \partial _{t}u_{n}\right) \right\vert ^{r+1}dxds.
\end{equation*}%
Now using $\left( \ref{proof weighted energy u energy identity}\right) $ and
the fact that%
\begin{equation}
g_{n}\left( \partial _{t}u_{n}\right) \partial _{t}u_{n}\geq \left\vert
\left( I+n^{-1}g\right) ^{-1}\left( \partial _{t}u_{n}\right) \right\vert
^{r+1},  \label{lipshitz approxmation dtu}
\end{equation}%
we infer that%
\begin{equation*}
\int_{0}^{t}\dint_{\Omega }a\left( x\right) \left\vert \left(
I+n^{-1}g\right) ^{-1}\left( \partial _{t}u_{n}\right) \right\vert
^{r+1}dxds\leq E_{u_{n}}\left( 0\right) ,
\end{equation*}%
and%
\begin{equation*}
\begin{array}{c}
\dint_{0}^{t}\dint_{\Omega }\left\vert \partial _{t}u_{n}\right\vert
^{2}+\left\vert \nabla u_{n}\right\vert ^{2}dxds\leq \left( 1+t\right)
E_{u_{n}}\left( 0\right) .%
\end{array}%
\end{equation*}%
We have%
\begin{equation}
\begin{array}{l}
\dint_{\Omega }\left\vert u_{n}\left( s\right) \right\vert ^{2}dx\leq
C\left( 1+s\right) \left( E_{u_{n}}\left( 0\right) +\left\Vert
u_{0}\right\Vert _{L^{2}}^{2}\right) \\ 
\text{and} \\ 
\dint_{\Omega }\left\vert u_{n}\left( s\right) \right\vert ^{r+1}dx\leq
C\left( 1+s\right) ^{\frac{r+1}{2}}\left( E_{u_{n}}\left( 0\right)
+\left\Vert u_{0}\right\Vert _{L^{2}}^{2}\right) ^{\frac{r+1}{2}}.%
\end{array}
\label{proof weighted energy L2 esimate}
\end{equation}%
Therefore%
\begin{equation*}
\begin{array}{l}
\dint_{0}^{t}\dint_{\Omega }\left\vert u_{n}\right\vert ^{2}dxds\leq C\left(
1+t\right) ^{2}\left( E_{u_{n}}\left( 0\right) +\left\Vert u_{0}\right\Vert
_{L^{2}}^{2}\right) , \\ 
\dint_{0}^{t}\dint_{\Omega }\left\vert u_{n}\right\vert ^{r+1}dxds\leq
C\left( 1+t\right) ^{\frac{r+3}{2}}\left( E_{u_{n}}\left( 0\right)
+\left\Vert u_{0}\right\Vert _{L^{2}}^{2}\right) ^{\frac{r+1}{2}}.%
\end{array}%
\end{equation*}%
Combining the estimates above with $\left( \ref{proof energy weighted energy
identity 1}\right) ,$ we obtain\ 
\begin{equation}
\begin{array}{l}
E_{v_{n}}\left( t\right) +\dint_{0}^{t}\dint_{\Omega }a\left( x\right)
\left( 1+\varphi \right) g_{n}\left( \partial _{t}u_{n}\right) \partial
_{t}u_{n}dxds \\ 
\leq C\left( 1+t\right) ^{3}\left( E_{v_{n}}\left( 0\right) +E_{u_{n}}\left(
0\right) +\left( E_{u_{n}}\left( 0\right) +\left\Vert u_{0}\right\Vert
_{L^{2}}^{2}\right) ^{\frac{r+1}{2}}+\left\Vert u_{0}\right\Vert
_{L^{2}}^{2}\right) .%
\end{array}
\label{weight energy proof 1}
\end{equation}%
It is easy to see that%
\begin{equation*}
E_{\varphi }\left( u_{n}\right) \left( t\right) \leq 2E_{v_{n}}\left(
t\right) +C\left\Vert u_{n}\left( t\right) \right\Vert _{L^{2}}^{2}.
\end{equation*}%
Therefore combining the estimate above with $\left( \ref{weight energy proof
1}\right) $ and $\left( \ref{proof weighted energy L2 esimate}\right) $ we
obtain\ 
\begin{equation}
\begin{array}{l}
E_{\varphi }\left( u_{n}\right) \left( t\right) +\dint_{0}^{t}\dint_{\Omega
}a\left( x\right) \left( 1+\varphi \right) g_{n}\left( \partial
_{t}u_{n}\right) \partial _{t}u_{n}dxds \\ 
\leq C\left( 1+t\right) ^{2}\left( E_{\varphi }\left( u_{n}\right) \left(
0\right) +\left\Vert u_{0}\right\Vert _{L^{2}}^{2}+E_{u_{n}}\left( 0\right)
+\left( E_{u_{n}}\left( 0\right) +\left\Vert u_{0}\right\Vert
_{L^{2}}^{2}\right) ^{\frac{r+1}{2}}\right) .%
\end{array}
\label{weighted estimate}
\end{equation}

Note that in the estimate above we have used the fact that%
\begin{equation*}
E_{v_{n}}\left( 0\right) \leq E_{\varphi }\left( u_{n}\right) \left(
0\right) +\left\Vert u_{0}\right\Vert _{L^{2}}^{2}.
\end{equation*}%
Now using $\left( \ref{weighted estimate}\right) $ and $\left( \ref{lipshitz
approxmation dtu}\right) ,$ we infer that 
\begin{equation*}
\begin{array}{l}
\sqrt{1+\varphi }\partial _{x_{i}}u_{n}\underset{n\rightarrow +\infty }{%
\longrightarrow }\psi _{i}\text{ in the weak star topology of }L^{\infty
}\left( \left[ 0,T\right] ,L^{2}\left( \Omega \right) \right) ,\text{ }i\in
\left\{ 1,..,d\right\} \\ 
\sqrt{1+\varphi }\partial _{t}u_{n}\underset{n\rightarrow +\infty }{%
\longrightarrow }\phi _{1}\text{ in the weak star topology of }L^{\infty
}\left( \left[ 0,T\right] ,L^{2}\left( \Omega \right) \right) \\ 
\left( a\left( 1+\varphi \right) \right) ^{\frac{1}{r+1}}\left(
I+n^{-1}g\right) ^{-1}\left( \partial _{t}u_{n}\right) \underset{%
n\rightarrow +\infty }{\longrightarrow }\phi _{2}\text{ in the weak topology
of }L^{r+1}\left( \left( 0,T\right) \times \Omega \right) .%
\end{array}%
\end{equation*}%
Now we show that%
\begin{equation*}
\begin{array}{c}
\psi _{i}=\sqrt{1+\varphi }\partial _{x_{i}}u,\text{ }\phi _{1}=\sqrt{%
1+\varphi }\partial _{t}u\text{ and }\phi _{2}=\left( a\left( 1+\varphi
\right) \right) ^{\frac{1}{r+1}}\partial _{t}u.%
\end{array}%
\end{equation*}%
Let $K$ be a compact set of $\left( 0,T\right) \times \Omega .$ Using $%
\left( \ref{Strong convergence u n consequence}\right) ,$ we get%
\begin{equation*}
\sqrt{1+\varphi }\partial _{t}u_{n}\underset{n\rightarrow +\infty }{%
\longrightarrow }\sqrt{1+\varphi }\partial _{t}u,\text{ a.e. in }K,
\end{equation*}%
and using the fact that the function $%
\begin{array}{l}
s\longmapsto \left( I+n^{-1}g\right) ^{-1}\left( s\right) ,%
\end{array}%
$ is non-expansive on $%
%TCIMACRO{\U{211d} }%
%BeginExpansion
\mathbb{R}
%EndExpansion
,$ we obtain%
\begin{equation*}
\begin{array}{l}
\left( a\left( 1+\varphi \right) \right) ^{\frac{1}{r+1}}\left(
I+n^{-1}g\right) ^{-1}\left( \partial _{t}u_{n}\right) \underset{%
n\rightarrow +\infty }{\longrightarrow }\left( a\left( 1+\varphi \right)
\right) ^{\frac{1}{r+1}}\partial _{t}u,\text{ a.e. in }K,%
\end{array}%
\end{equation*}%
This is enough to imply%
\begin{equation*}
\begin{array}{c}
\phi _{1}=\sqrt{1+\varphi }\partial _{t}u\text{ and }\phi _{2}=\left(
a\left( 1+\varphi \right) \right) ^{\frac{1}{r+1}}\partial _{t}u.%
\end{array}%
\end{equation*}%
From $\left( \ref{weak convergence un}\right) $ and by a classical
compactness argument, we can show that there exists a subsequence of $\left(
u_{n}\right) $ still denoted by $\left( u_{n}\right) ,$ such that%
\begin{equation*}
\partial _{x_{i}}u_{n}\underset{n\rightarrow +\infty }{\longrightarrow }%
\partial _{x_{i}}u\text{ strongly in }L^{2}\left( K\right) .
\end{equation*}%
Therefore extracting a subsequence if necessary\ 
\begin{equation*}
\partial _{x_{i}}u_{n}\underset{n\rightarrow +\infty }{\longrightarrow }%
\partial _{x_{i}}u,\text{ a.e. in }K,
\end{equation*}%
which gives%
\begin{equation*}
\sqrt{1+\varphi }\partial _{x_{i}}u_{n}\underset{n\rightarrow +\infty }{%
\longrightarrow }\sqrt{1+\varphi }\partial _{x_{i}}u\text{ },\text{ a.e. in }%
K.
\end{equation*}%
We conclude that%
\begin{equation*}
\psi _{i}=\sqrt{1+\varphi }\partial _{x_{i}}u,\text{ }i\in \left\{
1,..,d\right\} .
\end{equation*}%
\ Therefore%
\begin{equation}
\sqrt{\varphi }\nabla u\in L_{loc}^{\infty }\left( 
%TCIMACRO{\U{211d} }%
%BeginExpansion
\mathbb{R}
%EndExpansion
_{+},\left( L^{2}\left( \Omega \right) \right) ^{d}\right) \text{ and }\sqrt{%
\varphi }\partial _{t}u\in L_{loc}^{\infty }\left( 
%TCIMACRO{\U{211d} }%
%BeginExpansion
\mathbb{R}
%EndExpansion
_{+},L^{2}\left( \Omega \right) \right) .
\label{proof weighted energy regularity weight}
\end{equation}%
Now we will prove the energy estimate $\left( \ref{weighted energy}\right) .$%
\ We remind that%
\begin{equation*}
u\in L_{loc}^{\infty }\left( 
%TCIMACRO{\U{211d} }%
%BeginExpansion
\mathbb{R}
%EndExpansion
_{+},H_{0}^{1}\left( \Omega \right) \cap H^{2}\left( \Omega \right) \right)
\cap W^{1,\infty }\left( 
%TCIMACRO{\U{211d} }%
%BeginExpansion
\mathbb{R}
%EndExpansion
_{+},H_{0}^{1}\left( \Omega \right) \right) \cap W^{2,\infty }\left( 
%TCIMACRO{\U{211d} }%
%BeginExpansion
\mathbb{R}
%EndExpansion
_{+},L^{2}\left( \Omega \right) \right) .
\end{equation*}%
Let $R\gg 1$ and setting $S\left( R\right) =\partial B_{R}.$ It is easy to
see that%
\begin{equation*}
\begin{array}{l}
\frac{1}{2}\frac{d}{dt}\dint_{\Omega \cap B_{R}}\varphi \left( \left\vert
\nabla u\left( t\right) \right\vert ^{2}+\left\vert \partial _{t}u\left(
t\right) \right\vert ^{2}\right) dx+\dint_{\Omega \cap B_{R}}a\left(
x\right) \varphi \left\vert \partial _{t}u\left( t\right) \right\vert
^{r+1}dx \\ 
=\frac{\alpha }{2}\dint_{\Omega \cap B_{R}}\varphi ^{\prime }\left(
\left\vert \nabla u\left( t\right) \right\vert ^{2}+\left\vert \partial
_{t}u\left( t\right) \right\vert ^{2}\right) dx+\dint_{\Omega \cap
B_{R}}\varphi \nabla u\left( t\right) \cdot \nabla \partial _{t}u\left(
t\right) +\varphi \partial _{t}u\left( t\right) \partial _{t}^{2}u\left(
t\right) dx \\ 
+\dint_{\Omega \cap B_{R}}a\left( x\right) \varphi \left\vert \partial
_{t}u\left( t\right) \right\vert ^{r+1}dx \\ 
=\frac{\alpha }{2}\dint_{\Omega \cap B_{R}}\varphi ^{\prime }\left(
\left\vert \nabla u\left( t\right) \right\vert ^{2}+\left\vert \partial
_{t}u\left( t\right) \right\vert ^{2}\right) dx+\dint_{\Omega \cap
B_{R}}\nabla u\left( t\right) \cdot \nabla \left( \varphi \partial
_{t}u\left( t\right) \right) +\varphi \partial _{t}u\left( t\right) \partial
_{t}^{2}u\left( t\right) dx \\ 
+\dint_{\Omega \cap B_{R}}a\left( x\right) \varphi \left\vert \partial
_{t}u\left( t\right) \right\vert ^{r+1}dx-\eta \dint_{\Omega \cap
B_{R}}\varphi ^{\prime }\frac{x\cdot \nabla u\left( t\right) }{q\left(
x\right) }\partial _{t}u\left( t\right) dx.%
\end{array}%
\end{equation*}%
Green's formula along with the fact that $u$ is a solution of $\left( \ref%
{system}\right) ,$%
\begin{equation*}
\begin{array}{l}
\frac{1}{2}\frac{d}{dt}\dint_{\Omega \cap B_{R}}\varphi \left( \left\vert
\nabla u\left( t\right) \right\vert ^{2}+\left\vert \partial _{t}u\left(
t\right) \right\vert ^{2}\right) dx+\dint_{\Omega \cap B_{R}}a\left(
x\right) \varphi \left\vert \partial _{t}u\left( t\right) \right\vert
^{r+1}dx \\ 
=\frac{\alpha }{2}\dint_{\Omega \cap B_{R}}\varphi ^{\prime }\left(
\left\vert \nabla u\left( t\right) \right\vert ^{2}+\left\vert \partial
_{t}u\left( t\right) \right\vert ^{2}\right) dx-\eta \dint_{\Omega \cap
B_{R}}\varphi ^{\prime }\frac{x\cdot \nabla u\left( t\right) }{q\left(
x\right) }\partial _{t}u\left( t\right) dx+\dint_{S\left( R\right) }\varphi 
\frac{x\cdot \nabla u\left( t\right) }{R}\partial _{t}u\left( t\right) dS.%
\end{array}%
\end{equation*}%
\ Integrating the estimate above between $t$ and $t+T$, we obtain%
\begin{equation}
\begin{array}{l}
\dint_{\Omega \cap B_{R}}\varphi \left( \left\vert \nabla u\left( t+T\right)
\right\vert ^{2}+\left\vert \partial _{t}u\left( t+T\right) \right\vert
^{2}\right) dx+\dint_{t}^{t+T}\dint_{\Omega \cap B_{R}}a\left( x\right)
\varphi \left\vert \partial _{t}u\right\vert ^{r+1}dxds \\ 
\leq E_{\varphi }\left( u\right) \left( t\right) +\frac{\alpha }{2}%
\dint_{t}^{t+T}\dint_{\Omega }\left\vert \varphi ^{\prime }\right\vert
\left( \left\vert \nabla u\left( s\right) \right\vert ^{2}+\left\vert
\partial _{t}u\left( s\right) \right\vert ^{2}\right) dxds \\ 
+\eta \dint_{t}^{t+T}\dint_{\Omega }\left\vert \varphi ^{\prime }\frac{%
x\cdot \nabla u\left( s\right) }{q\left( x\right) }\partial _{t}u\left(
s\right) \right\vert dxds+\dint_{t}^{t+T}\dint_{S\left( R\right) }\varphi
\left\vert \frac{x\cdot \nabla u\left( s\right) }{R}\partial _{t}u\left(
s\right) \right\vert dSds.%
\end{array}
\label{proof weighted energy 1}
\end{equation}%
Using Young's inequality%
\begin{equation*}
\int_{t}^{t+T}\int_{S\left( R\right) }\varphi \left\vert \frac{x\cdot \nabla
u}{R}\partial _{t}u\right\vert dSd\tau \leq \frac{1}{2}\int_{t}^{t+T}\int_{S%
\left( R\right) }\left( \left\vert \partial _{r}u\right\vert ^{2}+\left\vert
\partial _{t}u\right\vert ^{2}\right) \varphi dSd\tau .
\end{equation*}%
From $\left( \ref{proof weighted energy regularity weight}\right) ,$ we
infer that 
\begin{equation*}
\underset{R\longrightarrow +\infty }{\lim \inf }\int_{t}^{t+T}\int_{S\left(
R\right) }\varphi \left\vert \frac{x\cdot \nabla u}{R}\partial
_{t}u\right\vert dSd\tau =0.
\end{equation*}%
Passing to the limit in $\left( \ref{proof weighted energy 1}\right) ,$ we
get%
\begin{equation*}
\begin{array}{l}
E_{\varphi }\left( u\right) \left( t+T\right) +\dint_{t}^{t+T}\dint_{\Omega
}a\left( x\right) \varphi \left\vert \partial _{t}u\right\vert
^{r+1}dxds\leq E_{\varphi }\left( u\right) \left( t\right) \\ 
+\frac{\alpha }{2}\dint_{t}^{t+T}\dint_{\Omega }\left\vert \varphi ^{\prime
}\right\vert \left( \left\vert \nabla u\left( s\right) \right\vert
^{2}+\left\vert \partial _{t}u\left( s\right) \right\vert ^{2}\right)
dxds+\eta \dint_{t}^{t+T}\dint_{\Omega }\left\vert \varphi ^{\prime }\frac{%
x\cdot \nabla u}{q\left( x\right) }\partial _{t}u\left( s\right) \right\vert
dxds%
\end{array}%
\end{equation*}%
\ Young's inequality, gives%
\begin{equation*}
\begin{array}{l}
E_{\varphi }\left( u\right) \left( t+T\right) +\dint_{t}^{t+T}\dint_{\Omega
}a\left( x\right) \varphi \left\vert \partial _{t}u\right\vert ^{r+1}dxds \\ 
\leq E_{\varphi }\left( u\right) \left( t\right) +\frac{\alpha +\eta }{2}%
\dint_{t}^{t+T}\dint_{\Omega }\left\vert \varphi ^{\prime }\right\vert
\left( \left\vert \nabla u\left( s\right) \right\vert ^{2}+\left\vert
\partial _{t}u\left( s\right) \right\vert ^{2}\right) dxds.%
\end{array}%
\end{equation*}
\end{proof}

The proof of our results need a weighted observability estimate for the
local energy and to show such result we need to prove a unique continuation
result for the wave equation.

\begin{lemma}
\label{lemma unique continuation}We assume that Hyp A holds and $\left(
\omega ,T\right) $ geometrically controls $\Omega $. Then the only solution
of the system%
\begin{equation}
\left\{ 
\begin{array}{ll}
\partial _{t}^{2}z-\Delta z=0 & \text{in }\left( 0,T\right) \times \Omega ,
\\ 
z=0 & \text{on }\left( 0,T\right) \times \Gamma , \\ 
a\left( x\right) \partial _{t}z=0 & \text{on }\left( 0,T\right) \times
\Omega ,%
\end{array}%
\right.  \label{wave system unique continuation}
\end{equation}%
in the class%
\begin{equation*}
C^{0}\left( \left[ 0,T\right] ;H_{D}\left( \Omega \right) \right) \cap
C^{1}\left( \left[ 0,T\right] ;L^{2}\left( \Omega \right) \right) ,
\end{equation*}%
is the null one, where $H_{D}\left( \Omega \right) $ is the completion of $%
C_{c}^{\infty }\left( \Omega \right) $\ with respect to the norm%
\begin{equation*}
\left\Vert \varphi \right\Vert _{H}^{2}=\dint_{\Omega }\left\vert \nabla
\varphi \left( x\right) \right\vert ^{2}dx.
\end{equation*}
\end{lemma}

\begin{proof}
Let $\chi \in C_{c}^{\infty }\left( 
%TCIMACRO{\U{211d} }%
%BeginExpansion
\mathbb{R}
%EndExpansion
^{d}\right) $ such that $\chi =1$ on $\left\{ \left\vert x\right\vert \leq
L\right\} $ and the support of $\chi $ is contained in $\left\{ \left\vert
x\right\vert \leq 2L\right\} .$ First we note that $H_{D}\left( \Omega
\right) \subset H_{loc}^{1}\left( \Omega \right) .$\ Let $z$ be a solution
of the system $\left( \ref{wave system unique continuation}\right) .$\ We
set $w=\chi z,$ we observe that 
\begin{equation*}
\left\{ 
\begin{array}{ll}
\partial _{t}^{2}w-\Delta w=-2\nabla \chi \nabla z-z\Delta \chi & \text{in }%
\left( 0,T\right) \times \Omega \cap B_{2L}, \\ 
w=0 & \text{on }\left( 0,T\right) \times \Gamma \cup \left\{ \left\vert
x\right\vert =2L\right\} , \\ 
\left( w_{0},w_{1}\right) \in H_{0}^{1}\left( \Omega \cap B_{2L}\right)
\times L^{2}\left( \Omega \cap B_{2L}\right) &  \\ 
a\left( x\right) \partial _{t}w=0 & \text{on }\left( 0,T\right) \times
\Omega .%
\end{array}%
\right.
\end{equation*}%
From linear semi-group theory, we infer that 
\begin{equation*}
w\in C^{0}\left( \left[ 0,T\right] ;H_{0}^{1}\left( \Omega \cap
B_{2L}\right) \right) \cap C^{1}\left( \left[ 0,T\right] ;L^{2}\left( \Omega
\cap B_{2L}\right) \right) .
\end{equation*}%
We set 
\begin{equation*}
v_{n}\left( t,x\right) =n\left( w\left( t+\frac{1}{n},x\right) -w\left(
t,x\right) \right) .
\end{equation*}%
Since 
\begin{equation*}
a\left( x\right) \geq \epsilon _{0}>0\text{ for }\left\vert x\right\vert
\geq L,
\end{equation*}%
and $\chi =1$ on $\left\{ \left\vert x\right\vert \leq L\right\} ,$
therefore, $v_{n}$ is a solution of%
\begin{equation*}
\left\{ 
\begin{array}{ll}
\partial _{t}^{2}v_{n}-\Delta v_{n}=0 & \text{in }\left( 0,T\right) \times
\Omega \cap B_{2L} \\ 
v_{n}=0 & \text{on }\left( 0,T\right) \times \Gamma \cup \left\{ \left\vert
x\right\vert =2L\right\} \\ 
a\left( x\right) \partial _{t}v_{n}=0 & \text{on }\left( 0,T\right) \times
\Omega .%
\end{array}%
\right.
\end{equation*}%
We have $\left( \omega \cap B_{2L},T\right) $ geometrically controls $\Omega
\cap B_{2L}$ and 
\begin{equation*}
v_{n}\in C^{0}\left( \left[ 0,T\right] ;H_{0}^{1}\left( \Omega \cap
B_{2L}\right) \right) \cap C^{1}\left( \left[ 0,T\right] ;L^{2}\left( \Omega
\cap B_{2L}\right) \right) ,
\end{equation*}%
thus using the observability estimate for the wave equation in bounded
domain (see e.g. \cite{daou}), we end up with 
\begin{equation*}
E_{v_{n}}\left( s\right) =0,\text{ for all }s\in \left[ 0,T\right] .
\end{equation*}

On the other hand, 
\begin{equation*}
v_{n}\underset{n\rightarrow +\infty }{\longrightarrow }\partial _{t}w\text{
\ in }\mathcal{D}^{\prime }\left( \left( 0,T\right) \times \Omega \right) .
\end{equation*}%
We deduce that $\partial _{t}w=0.$ Recalling that $\chi =1$ on $\left\{
\left\vert x\right\vert \leq L\right\} ,$ hence%
\begin{equation*}
\partial _{t}z\left( t,x\right) =0,~\text{on }\left\{ \left\vert
x\right\vert \leq L\right\} .
\end{equation*}%
Using $a\left( x\right) \partial _{t}z=0$ on $\left( 0,T\right) \times
\Omega $\ along with $a\left( x\right) >\epsilon _{0}>0$ for $\left\vert
x\right\vert \geq L,$ we infer that $\partial _{t}z\equiv 0$ on $\left[ 0,T%
\right] \times \Omega $. This mean that $z\left( t,x\right) =z\left(
x\right) $ is independent of $t$. Therefore, we have 
\begin{equation*}
\begin{array}{l}
\Delta z=0\text{ and }z\in H_{D}\left( \Omega \right) ,%
\end{array}%
\end{equation*}%
we conclude from this that $z\equiv 0$ on $\left[ 0,T\right] \times \Omega $.
\end{proof}

In view of the fact that the energy doesn't control the $L^{2}$ norm of the
solution, we do not expect to prove an observability estimate for the global
energy and this is the essential difference with the equation in a bounded
domain or the Klein-Gordon equation.

We remind that under our assumptions we have the following Poincar\'{e}
inequality (see \cite{Dan-Shi} and \cite{Lax}) 
\begin{equation}
\left\Vert f\right\Vert _{L^{2}\left( \Omega \cap B_{R}\right) }\leq
C_{R}\left\Vert \nabla f\right\Vert _{L^{2}\left( \Omega \right) },\text{
for every }f\in H_{D}\left( \Omega \right) \text{ and }R\geq r_{0}.
\label{poincare inequality}
\end{equation}

Next we show a weighted observability estimate for the local energy of
solutions of the system $\left( \ref{system}\right) $.

\begin{proposition}
\label{proposition onservability global}We assume that Hyp A holds and $%
\omega $ satisfies GCC. Let $\delta >0$ and $R_{0}\geq L.$ Let $\varphi $ be
a positive function in $C^{2}\left( 
%TCIMACRO{\U{211d} }%
%BeginExpansion
\mathbb{R}
%EndExpansion
_{+}\right) $ such that $\varphi ^{\prime }$ in $L^{\infty }\left( 
%TCIMACRO{\U{211d} }%
%BeginExpansion
\mathbb{R}
%EndExpansion
_{+}\right) .$ We suppose that there exists a positive constant $K$ such that%
\begin{equation*}
\underset{%
%TCIMACRO{\U{211d} }%
%BeginExpansion
\mathbb{R}
%EndExpansion
_{+}}{\sup }\left\vert \frac{\varphi ^{\prime \prime }\left( t\right) }{%
\varphi ^{\prime }\left( t\right) }\right\vert \leq K.
\end{equation*}%
Moreover we assume that the function $t\longmapsto \left\vert \frac{\varphi
^{\prime }\left( t\right) }{\varphi \left( t\right) }\right\vert $ is
monotone decreasing and $\underset{t\rightarrow +\infty }{\lim }\left\vert 
\frac{\varphi ^{\prime }\left( t\right) }{\varphi \left( t\right) }%
\right\vert =0$. There exist $T>0$ and $C_{T,\delta }=C\left( T,\delta
,R_{0}\right) >0$ , such that the following inequality%
\begin{equation}
\begin{array}{l}
\medskip \dint_{t}^{t+T}\dint_{\Omega \cap B_{R_{0}}}\varphi \left( q\left(
x\right) +s\right) \left( \left\vert u\right\vert ^{2}+\left\vert \nabla
u\right\vert ^{2}+\left\vert \partial _{t}u\right\vert ^{2}\right) dxds \\ 
\medskip \leq C_{T,\delta }\dint_{t}^{t+T}\dint_{\Omega }a\left( x\right)
\varphi \left( q\left( x\right) +s\right) \left\vert \partial
_{t}u\right\vert ^{2}dxds \\ 
\medskip +C_{T,\delta }\dint_{t}^{t+T}\dint_{\Omega }\varphi \left( q\left(
x\right) +s\right) \left\vert g\left( s,x\right) \right\vert ^{2}dxds \\ 
\medskip +C_{T,\delta }\dint_{t}^{t+T}\dint_{\Omega }\frac{\left( \varphi
^{\prime }\left( q\left( x\right) +s\right) \right) ^{2}}{\varphi \left(
q\left( x\right) +s\right) }a\left( x\right) \left\vert u\right\vert ^{2}dxds
\\ 
+\delta \dint_{t}^{t+T}\dint_{\Omega }\varphi \left( q\left( x\right)
+s\right) \left( \left\vert \nabla u\right\vert ^{2}+\left\vert \partial
_{t}u\right\vert ^{2}\right) dxds,%
\end{array}
\label{observability gradient1 obs}
\end{equation}%
holds for every%
\begin{equation*}
g\text{ such that }\sqrt{\varphi }g\in L_{loc}^{2}\left( 
%TCIMACRO{\U{211d} }%
%BeginExpansion
\mathbb{R}
%EndExpansion
_{+},L^{2}\left( \Omega \right) \right) ,
\end{equation*}%
for all 
\begin{equation*}
u\in C^{0}\left( 
%TCIMACRO{\U{211d} }%
%BeginExpansion
\mathbb{R}
%EndExpansion
_{+},H_{0}^{1}\left( \Omega \right) \right) \cap C^{1}\left( 
%TCIMACRO{\U{211d} }%
%BeginExpansion
\mathbb{R}
%EndExpansion
_{+},L^{2}\left( \Omega \right) \right) ,
\end{equation*}%
solution of 
\begin{equation}
\left\{ 
\begin{array}{lc}
\partial _{t}^{2}u-\Delta u=g & \text{in }\mathbb{R}_{+}\times \Omega , \\ 
u=0 & \text{on }\mathbb{R}_{+}\times \Gamma , \\ 
u\left( 0,x\right) =u_{0}\quad \text{ and }\quad \partial _{t}u\left(
0,x\right) =u_{1}, & 
\end{array}%
\right.  \label{sys g obs}
\end{equation}%
such that $E_{\varphi }\left( u\right) \left( 0\right) <\infty $.
\end{proposition}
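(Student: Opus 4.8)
The plan is to argue by contradiction and to reduce the weighted estimate, by a change of unknown that absorbs the weight, to the classical unweighted observability estimate under the geometric control condition, which is then settled by a microlocal defect measure argument on the sequence of counterexamples. Fix $\delta>0$ and suppose no pair $(C_{T,\delta,\chi},T_1)$ works. Taking $C=T_1=n$ produces times $t_n\geq n$, parameters $\lambda_n\in[c_1,c_0]$, $\mu_n\in[0,c_0]$, sources $g_n$ and solutions $u_n$ of \eqref{sys g obs} for which, writing $\varphi_n=\varphi(\mu_n q(x)+\lambda_n s)$, the localized quantity $L_n:=\int_{t_n}^{t_n+T}\int_\Omega \chi^2\varphi_n(|u_n|^2+|\nabla u_n|^2+|\partial_t u_n|^2)\,dx\,ds$ may be normalized to $L_n=1$ (the system \eqref{sys g obs} is linear, so $u_n$ and $g_n$ scale together), while $\int a\varphi_n|\partial_t u_n|^2$, $\int \varphi_n|g_n|^2$ and $\int \frac{(\varphi_n')^2}{\varphi_n}a|u_n|^2$ all tend to $0$ and the global weighted energy $\int_{t_n}^{t_n+T}\int_\Omega \varphi_n(|\nabla u_n|^2+|\partial_t u_n|^2)$ stays bounded by $1/\delta$. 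Crucially $t_n\to\infty$, so on any fixed bounded $x$-set and for $s\in[t_n,t_n+T]$ the argument $\mu_n q(x)+\lambda_n s$ is large; since $|\varphi'/\varphi|$ decreases to $0$, this is exactly where the slow variation of the weight is exploited, and it is why $T_1$ must be allowed large.

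Next I would set $v_n=\sqrt{\varphi_n}\,u_n$ and translate $s\mapsto s-t_n$ so that all functions live on $[0,T]\times\Omega$. A direct computation, of the type carried out in the substitution $v=(1+\varphi)^{1/2}u$ in the proof of Proposition \ref{proposition weighted energy}, shows that $v_n$ solves $\partial_t^2 v_n-\Delta v_n=G_n$, where $G_n$ collects $\sqrt{\varphi_n}\,g_n$ together with first-order terms whose coefficients are multiples of $\lambda_n\varphi_n'/\varphi_n$ and $\mu_n(\varphi_n'/\varphi_n)(x/q)$, and zeroth-order terms controlled by $\varphi_n''/\varphi_n$ and $(\varphi_n'/\varphi_n)^2$. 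Using $|\varphi''/\varphi'|\leq K$, the monotonicity of $|\varphi'/\varphi|$ and $t_n\to\infty$, all these coefficients tend to $0$ on the region carrying the energy of $v_n$, so that $\|G_n\|_{L^2([0,T]\times\Omega)}\to 0$; simultaneously the energy $\int_0^T\int_\Omega(|\nabla v_n|^2+|\partial_t v_n|^2)$ stays bounded, $\int_0^T\int_\Omega a|\partial_t v_n|^2\to 0$ (the cross term being absorbed precisely by $\int \frac{(\varphi_n')^2}{\varphi_n}a|u_n|^2\to 0$, which is why that term appears in the statement), and $\int_0^T\int_\Omega \chi^2(|v_n|^2+|\nabla v_n|^2+|\partial_t v_n|^2)\to 1$.

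It then remains to show that a bounded-energy sequence $v_n$ solving the wave equation with $G_n\to 0$ in $L^2$ and $\int_0^T\int_\Omega a|\partial_t v_n|^2\to 0$ cannot keep its localized energy bounded away from $0$; this produces the contradiction. Up to a subsequence $v_n\rightharpoonup v$, with $v$ a free wave and $\partial_t v=0$ on $\omega$; since $\omega\supset\{|x|\geq L\}$ by Hyp A and $(\omega,T)$ controls $\Omega$, unique continuation gives $v\equiv 0$, so $v_n\to 0$ in $L^2_{loc}$ and the $\chi^2|v_n|^2$ contribution vanishes. To the weakly null $(v_n)$ I would attach its microlocal defect measure $\nu$ on the cosphere bundle: because $G_n\to 0$ in $L^2$, $\nu$ is invariant along the generalized bicharacteristic flow of Melrose--Sj\"ostrand (so the Dirichlet boundary is accounted for), while $\int a|\partial_t v_n|^2\to 0$ forces $\nu$ to vanish over $\omega$. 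Every generalized bicharacteristic either reaches $\{|x|\geq L\}$ or, staying bounded, enters the control region within time $T$; in either case it meets $\{a>\epsilon_0\}$, whence $\nu\equiv 0$ and $\int_0^T\int_\Omega \chi^2(|\nabla v_n|^2+|\partial_t v_n|^2)\to 0$, contradicting the normalization and yielding $C_{T,\delta,\chi}$ and $T_1$.

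The hard part will be twofold. First, one must check that \emph{every} perturbation created by removing the weight is negligible uniformly in $(\lambda_n,\mu_n,t_n)$; this is exactly the purpose of the bound $|\varphi''/\varphi'|\leq K$, the monotone decay of $|\varphi'/\varphi|$ to $0$, and the freedom to take $T_1$ large. Second, one must run the propagation of the microlocal defect measure up to the Dirichlet boundary through the Melrose--Sj\"ostrand flow, so that Hyp A and the geometric control condition can be combined to annihilate $\nu$; this propagation at the boundary, together with the uniform control of the weight, is the real technical core of the proof.
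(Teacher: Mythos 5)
Your proposal follows essentially the same route as the paper: a contradiction argument with normalization $\sigma_n$, the weight-absorbing substitution $v_n=\sqrt{\varphi_n}\,u_n/\sigma_n$, verification that the resulting source terms vanish in $L^{2}$ thanks to $\vert\varphi''/\varphi'\vert\leq K$ and $\vert\varphi'/\varphi\vert\downarrow 0$ with $t_n\to\infty$, vanishing of $\int a\vert\partial_t v_n\vert^{2}$, and a microlocal defect measure propagated along the Melrose--Sj\"ostrand flow under Hyp A and geometric control to annihilate the localized energy. The only cosmetic difference is that the paper first compares $v_n$ with free waves $Z_n$ sharing its data (to upgrade the time-averaged energy bound to $\sup_{[0,T]}E_{v_n}\leq C_{T,\delta}$ and to run the measure argument on $Z_n$, citing its earlier work), whereas you attach the measure directly to $v_n$; both implementations are equivalent.
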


\begin{proof}
Let $T>0$ such that $\left( \omega ,T\right) $ geometrically controls $%
\Omega $.

To prove this result we argue by contradiction: If $\left( \ref%
{observability gradient1 obs}\right) $ was false, there would exist a
sequences $\left( t_{n}\right) ,$~$\left( g_{n}\right) $ such that $\sqrt{%
\varphi }g_{n}\in L_{loc}^{2}\left( 
%TCIMACRO{\U{211d} }%
%BeginExpansion
\mathbb{R}
%EndExpansion
_{+},L^{2}\left( \Omega \right) \right) $ and a sequence of solutions $%
\left( u_{n}\right) $ in $C^{0}\left( 
%TCIMACRO{\U{211d} }%
%BeginExpansion
\mathbb{R}
%EndExpansion
_{+},H_{0}^{1}\left( \Omega \right) \right) \cap C^{1}\left( 
%TCIMACRO{\U{211d} }%
%BeginExpansion
\mathbb{R}
%EndExpansion
_{+},L^{2}\left( \Omega \right) \right) $ with $E_{\varphi }\left(
u_{n}\right) \left( 0\right) <\infty $ and such that%
\begin{equation}
\begin{array}{l}
\medskip \dint_{t_{n}}^{t_{n}+T}\dint_{\Omega \cap B_{R_{0}}}\varphi \left(
q\left( x\right) +s\right) \left( \left\vert u_{n}\right\vert
^{2}+\left\vert \nabla u_{n}\right\vert ^{2}+\left\vert \partial
_{t}u_{n}\right\vert ^{2}\right) dxds \\ 
\medskip \geq n\left( \dint_{t_{n}}^{t_{n}+T}\dint_{\Omega }a\left( x\right)
\varphi \left( q\left( x\right) +s\right) \left\vert \partial
_{t}u_{n}\right\vert ^{2}dxds\right) \\ 
\medskip +n\dint_{t_{n}}^{t_{n}+T}\dint_{\Omega }\varphi \left( q\left(
x\right) +s\right) \left\vert g_{n}\left( s,x\right) \right\vert ^{2}dxds \\ 
\medskip +n\left( \dint_{t_{n}}^{t_{n}+T}\dint_{\Omega }\frac{\left( \varphi
^{\prime }\left( q\left( x\right) +s\right) \right) ^{2}}{\varphi \left(
q\left( x\right) +s\right) }a\left( x\right) \left\vert u_{n}\right\vert
^{2}dxds\right) \\ 
+\delta \dint_{t_{n}}^{t_{n}+T}\dint_{\Omega }\varphi \left( q\left(
x\right) +s\right) \left( \left\vert \nabla u_{n}\right\vert ^{2}+\left\vert
\partial _{t}u_{n}\right\vert ^{2}\right) dxds.%
\end{array}
\label{proof proposition 1 contradiction estimate obs}
\end{equation}

$~$

\begin{description}
\item[First case] The sequence $\left( t_{n}\right) $ is bounded.

$\varphi $ is a continuous positive function on $%
%TCIMACRO{\U{211d} }%
%BeginExpansion
\mathbb{R}
%EndExpansion
_{+},$ therefore for all $K>R_{0}$ there exist $M>N>0$ such that%
\begin{equation}
N\leq \varphi \left( q\left( x\right) +t_{n}+s\right) \leq M,\text{ for all }%
\left( s,x\right) \in \left[ 0,T\right] \times B_{K}.  \label{fi boundness}
\end{equation}%
We set%
\begin{equation*}
\begin{array}{l}
\sigma _{n}^{2}=\dint_{t_{n}}^{t_{n}+T}\dint_{\Omega \cap B_{R_{0}}}\left(
\left\vert u_{n}\right\vert ^{2}+\left\vert \nabla u_{n}\right\vert
^{2}+\left\vert \partial _{t}u_{n}\right\vert ^{2}\right) dxds \\ 
\text{ and }v_{n}\left( t,x\right) =\frac{u_{n}\left( t_{n}+t,x\right) }{%
\sigma _{n}}.%
\end{array}%
\end{equation*}%
From $\left( \ref{proof proposition 1 contradiction estimate obs}\right) $
and $\left( \ref{fi boundness}\right) $, we infer that%
\begin{equation}
\begin{array}{l}
\dint_{t_{n}}^{t_{n}+T}\dint_{\Omega \cap B_{K}}\left( \left\vert \nabla
v_{n}\left( t\right) \right\vert ^{2}+\left\vert \partial _{t}v_{n}\left(
t\right) \right\vert ^{2}\right) dxdt\leq C_{\delta } \\ 
\text{and }\dint_{t_{n}}^{t_{n}+T}\dint_{\Omega \cap B_{R_{0}}}\left\vert
v_{n}\left( t\right) \right\vert ^{2}dxdt\leq C,%
\end{array}
\label{contradiction boundness}
\end{equation}%
and%
\begin{equation}
\begin{array}{l}
\dint_{0}^{T}\dint_{\Omega \cap B_{K}}a\left( x\right) \left\vert \partial
_{t}v_{n}\left( s,x\right) \right\vert ^{2}dxds\underset{n\rightarrow
+\infty }{\longrightarrow }0 \\ 
\frac{1}{\sigma _{n}^{2}}\dint_{0}^{T}\dint_{\Omega \cap B_{K}}\left\vert
g_{n}\left( s+t_{n},x\right) \right\vert ^{2}dxds\underset{n\rightarrow
+\infty }{\longrightarrow }0,%
\end{array}
\label{proof of proposition 5 fn converge obs 1 case}
\end{equation}%
for all $K>R_{0}.$ We note that since the function $t\longmapsto \left\vert 
\frac{\varphi ^{\prime }\left( t\right) }{\varphi \left( t\right) }%
\right\vert $ is monotone decreasing and the the sequence $\left(
t_{n}\right) $\ is bounded, then for all $K\geq L,$ we have%
\begin{equation}
\begin{array}{l}
\dint_{0}^{T}\dint_{\Omega \cap B_{K}}a\left( x\right) \left\vert
v_{n}\left( s,x\right) \right\vert ^{2}dxds \\ 
\leq C\dint_{0}^{T}\dint_{\Omega }\frac{\left( \varphi ^{\prime }\left(
q\left( x\right) +t_{n}+t\right) \right) ^{2}}{\varphi \left( q\left(
x\right) +t_{n}+t\right) }a\left( x\right) \left\vert u_{n}\left( s,x\right)
\right\vert ^{2}dxds\underset{n\rightarrow +\infty }{\longrightarrow }0.%
\end{array}
\label{a vn convergence}
\end{equation}%
Then the result above combined with $\left( \ref{contradiction boundness}%
\right) ,$ gives 
\begin{equation}
\int_{0}^{T}\int_{\Omega \cap B_{K}}\left( \left\vert v_{n}\left( t\right)
\right\vert ^{2}+\left\vert \nabla v_{n}\left( t\right) \right\vert
^{2}+\left\vert \partial _{t}v_{n}\left( t\right) \right\vert ^{2}\right)
dxdt\leq C_{\delta },~\text{for n large enough.}
\label{energy bound first case}
\end{equation}

We take $R_{1}$ and $R_{2}$ such that, $R_{2}>R_{1}>\max \left(
R_{0},2L\right) $ and let $\psi \in C_{c}^{\infty }\left( 
%TCIMACRO{\U{211d} }%
%BeginExpansion
\mathbb{R}
%EndExpansion
^{d}\right) $ such that $\psi =1$ on $\left\{ x\in 
%TCIMACRO{\U{211d} }%
%BeginExpansion
\mathbb{R}
%EndExpansion
^{d},\frac{3L}{2}\leq \left\vert x\right\vert \leq R_{1}\right\} $ and the
support of $\psi $ is contained in $\left\{ x\in 
%TCIMACRO{\U{211d} }%
%BeginExpansion
\mathbb{R}
%EndExpansion
^{d},~L\leq \left\vert x\right\vert \leq R_{2}\right\} .$ Let $0<\epsilon
<<1 $ and $\eta $ be a nonnegative function in $C_{c}^{\infty }\left(
0,T\right) $ such that 
\begin{equation*}
\eta \left( s\right) =1\text{ for }\epsilon \leq s\leq T-\epsilon .
\end{equation*}%
Now we show that%
\begin{equation}
\int_{\epsilon }^{T-\epsilon }\int_{\Omega \cap \left\{ \frac{3L}{2}\leq
\left\vert x\right\vert \leq R_{1}\right\} }\left\vert \nabla v_{n}\left(
s\right) \right\vert ^{2}+\left\vert v_{n}\left( s,x\right) \right\vert
^{2}dxds\underset{n\rightarrow +\infty }{\longrightarrow }0.
\label{exterior convergence to zero}
\end{equation}%
First we note that since the support of $\psi $ is contained in $\left\{
L\leq \left\vert x\right\vert \leq R_{2}\right\} $ and $a\left( x\right)
>\epsilon _{0}$ on $\left\{ L\leq \left\vert x\right\vert \right\} ,$ then
using $\left( \ref{a vn convergence}\right) $ we get%
\begin{equation*}
\epsilon _{0}\int_{0}^{T}\int_{\Omega \cap \left\{ \frac{3L}{2}\leq
\left\vert x\right\vert \leq R_{1}\right\} }\left\vert v_{n}\left(
s,x\right) \right\vert ^{2}dxds\leq \int_{0}^{T}\dint_{\Omega \cap \left\{ 
\frac{3L}{2}\leq \left\vert x\right\vert \leq R_{1}\right\} }a\left(
x\right) \left\vert v_{n}\left( s,x\right) \right\vert ^{2}dxds\underset{%
n\rightarrow +\infty }{\longrightarrow }0.
\end{equation*}%
We have, $v_{n}$ is a solution of the following system%
\begin{equation}
\left\{ 
\begin{array}{lc}
\partial _{t}^{2}v_{n}-\Delta v_{n}=\frac{1}{\sigma _{n}}g_{n}\left(
t,x\right) & \text{in }%
%TCIMACRO{\U{211d} }%
%BeginExpansion
\mathbb{R}
%EndExpansion
_{+}\times \Omega , \\ 
v_{n}\left( t,x\right) =0 & \text{on }%
%TCIMACRO{\U{211d} }%
%BeginExpansion
\mathbb{R}
%EndExpansion
_{+}\times \Gamma , \\ 
\left( v_{n}\left( 0\right) ,\partial _{t}v_{n}\left( 0\right) \right) =%
\frac{1}{\sigma _{n}}\left( u_{n}\left( t_{n}\right) ,\partial
_{t}u_{n}\left( t_{n}\right) \right) \in H_{0}^{1}\left( \Omega \right)
\times L^{2}\left( \Omega \right) . & 
\end{array}%
\right.  \label{proof proposition 4 system vn}
\end{equation}%
We multiply Eq$\left( \ref{proof proposition 4 system vn}\right) $ by $\eta
\psi ^{2}v_{n}$ and integrate over $\left( 0,T\right) \times \Omega ,$ we
obtain%
\begin{eqnarray*}
&&\int_{0}^{T}\dint_{\Omega }\eta \left( s\right) \psi ^{2}\left( x\right)
\left\vert \nabla v_{n}\left( s\right) \right\vert ^{2}dxds \\
&=&\int_{0}^{T}\dint_{\Omega }\eta ^{\prime }\left( s\right) \psi ^{2}\left(
x\right) v_{n}\left( s\right) \partial _{t}v_{n}\left( s\right) +\eta \left(
s\right) \psi ^{2}\left( x\right) \left\vert \partial _{t}v_{n}\left(
s\right) \right\vert ^{2}dxds \\
&&+\int_{0}^{T}\dint_{\Omega }\frac{1}{2}\eta \left( s\right) \Delta \psi
^{2}\left( x\right) \left\vert v_{n}\left( s\right) \right\vert ^{2}+\frac{1%
}{\sigma _{n}}\eta \left( s\right) \psi ^{2}\left( x\right) g_{n}\left(
s,x\right) v_{n}\left( s\right) dxds.
\end{eqnarray*}%
Using Young's inequality and the fact that $\eta $ is in $C_{c}^{\infty
}\left( 0,T\right) $, we infer that there exists a positive constant $c$
such that%
\begin{eqnarray*}
&&\int_{0}^{T}\dint_{\Omega }\eta \left( s\right) \psi ^{2}\left( x\right)
\left\vert \nabla v_{n}\left( s\right) \right\vert ^{2}dxds \\
&\leq &c\int_{0}^{T}\dint_{\Omega }\psi ^{2}\left( x\right) \left(
\left\vert \partial _{t}v_{n}\left( s\right) \right\vert ^{2}+\left\vert
v_{n}\left( s\right) \right\vert ^{2}\right) +\left\vert \Delta \psi
^{2}\left( x\right) \right\vert \left\vert v_{n}\left( s\right) \right\vert
^{2}dxds \\
&&+\frac{c}{\sigma _{n}^{2}}\int_{0}^{T}\int_{\Omega \cap
B_{R_{2}}}\left\vert g_{n}\left( s+t_{n},x\right) \right\vert ^{2}dxds,
\end{eqnarray*}%
therefore%
\begin{eqnarray*}
&&\int_{0}^{T}\dint_{\Omega }\eta \left( s\right) \psi ^{2}\left( x\right)
\left\vert \nabla v_{n}\left( s\right) \right\vert ^{2}dxds \\
&\leq &c\int_{0}^{T}\int_{\Omega \cap B_{R_{2}}}a\left( x\right) \left\vert
\partial _{t}v_{n}\left( s\right) \right\vert ^{2}+a\left( x\right)
\left\vert v_{n}\left( s\right) \right\vert ^{2}dxds \\
&&+c\int_{0}^{T}\int_{\Omega \cap B_{R_{2}}}\left\vert \frac{1}{\sigma _{n}}%
g_{n}\left( t_{n}+s,x\right) \right\vert ^{2}dxds.
\end{eqnarray*}%
Combining the estimate above with $\left( \ref{proof of proposition 5 fn
converge obs 1 case}\right) $ and $\left( \ref{a vn convergence}\right) ,$
we get%
\begin{equation*}
\begin{array}{l}
\dint_{\epsilon }^{T-\epsilon }\dint_{\Omega \cap \left\{ \frac{3L}{2}\leq
\left\vert x\right\vert \leq R_{1}\right\} }\left\vert \nabla v_{n}\left(
s\right) \right\vert ^{2}dxds \\ 
\leq \dint_{0}^{T}\dint_{\Omega }\eta \left( s\right) \psi ^{2}\left(
x\right) \left\vert \nabla v_{n}\left( s\right) \right\vert ^{2}dxds\underset%
{n\rightarrow +\infty }{\longrightarrow }0,%
\end{array}%
\end{equation*}%
we note that in the inequality above we have used the fact that $\psi =1$ on 
$\left\{ x\in 
%TCIMACRO{\U{211d} }%
%BeginExpansion
\mathbb{R}
%EndExpansion
^{d},\frac{3L}{2}\leq \left\vert x\right\vert \leq R_{1}\right\} $ and $\eta
=1$ on $\left[ \epsilon ,T-\epsilon \right] .~$

Let $\chi \in C_{c}^{\infty }\left( 
%TCIMACRO{\U{211d} }%
%BeginExpansion
\mathbb{R}
%EndExpansion
^{d}\right) $ such that $\chi =1$ on $\left\{ \left\vert x\right\vert \leq
R\right\} $ and the support of $\chi $ is contained in $\left\{ \left\vert
x\right\vert \leq R_{1}\right\} $ with $R_{1}>R>\max \left( R_{0},2L\right)
. $ We set $W_{n}=\chi v_{n},$ then $W_{n}$ is a solution of the following
system%
\begin{equation*}
\left\{ 
\begin{array}{ll}
\partial _{t}^{2}W_{n}-\Delta W_{n}=-2\nabla \chi \nabla v_{n}-v_{n}\Delta
\chi +\frac{1}{\sigma _{n}}\chi g_{n}\left( t,x\right) & 
%TCIMACRO{\U{211d} }%
%BeginExpansion
\mathbb{R}
%EndExpansion
_{+}\times \Omega \cap B_{R_{1}}, \\ 
W_{n}=0 & 
%TCIMACRO{\U{211d} }%
%BeginExpansion
\mathbb{R}
%EndExpansion
_{+}\times \Gamma \cup \left\{ \left\vert x\right\vert =R_{1}\right\} , \\ 
\left( W_{n}\left( 0\right) ,\partial _{t}W_{n}\left( 0\right) \right) =\chi
\left( v_{n}\left( 0\right) ,\partial _{t}v_{n}\left( 0\right) \right) . & 
\end{array}%
\right.
\end{equation*}%
In addition we have 
\begin{equation*}
W_{n}\in C\left( \left( 0,T\right) ,\text{ }H_{0}^{1}\left( \Omega \cap
B_{R_{1}}\right) \right) \cap C^{1}\left( \left( 0,T\right) ,\text{ }%
L^{2}\left( \Omega \cap B_{R_{1}}\right) \right) .
\end{equation*}%
Now we show that%
\begin{equation}
\underset{\lbrack 0,T]}{\sup }E_{W_{n}}\left( s\right) \leq C_{T,\delta },%
\text{ for n large enough.}  \label{energy Zn bounded1}
\end{equation}%
First we note that we have the following energy identity 
\begin{equation*}
\begin{array}{c}
tE_{W_{n}}\left( t\right) =\dint_{0}^{t}E_{W_{n}}\left( s\right)
ds+\dint_{0}^{t}\dint_{\Omega }s\left( -2\nabla \chi \nabla
v_{n}-v_{n}\Delta \chi +\frac{1}{\sigma _{n}}\chi g_{n}\right) \partial
_{t}W_{n}dxds%
\end{array}%
\end{equation*}%
for all $0\leq t\leq T.$ Then using Young's inequality and the fact that the
support of $W_{n}$ is contained in $\left\{ \left\vert x\right\vert \leq
R_{1}\right\} $, we deduce that 
\begin{equation*}
\begin{array}{l}
E_{W_{n}}\left( T\right) \\ 
\leq \frac{c}{T}\left( \dint_{0}^{T}\left( E_{W_{n}}\left( s\right)
+T\dint_{\Omega \cap B_{R_{1}}}\left\vert -2\nabla \chi \nabla
v_{n}-v_{n}\Delta \chi +\frac{1}{\sigma _{n}}\chi g_{n}\right\vert
^{2}+\left\vert \partial _{t}W_{n}\right\vert ^{2}dx\right) ds\right) \\ 
\leq \frac{c}{T}\dint_{0}^{T}\dint_{\Omega \cap B_{R_{1}}}\left\vert \nabla
v_{n}\right\vert ^{2}+\left\vert \partial _{t}v_{n}\right\vert
^{2}+\left\vert v_{n}\right\vert ^{2}+\left\vert \frac{1}{\sigma _{n}}\chi
g_{n}\right\vert ^{2}dxds.%
\end{array}%
\end{equation*}%
Combining the estimate above with $\left( \ref{proof of proposition 5 fn
converge obs 1 case}\right) $ and $\left( \ref{energy bound first case}%
\right) $, we obtain%
\begin{equation}
E_{W_{n}}\left( T\right) \leq C_{T,\delta },\text{ for n large enough.}
\label{bound1}
\end{equation}%
On the other hand, we have the following energy identity%
\begin{equation*}
\begin{array}{c}
E_{W_{n}}\left( t\right) =E_{W_{n}}\left( T\right)
+\dint_{t}^{T}\dint_{\Omega \cap B_{R_{1}}}\left( -2\nabla \chi \nabla
v_{n}-v_{n}\Delta \chi +\frac{1}{\sigma _{n}}\chi g_{n}\right) \partial
_{t}W_{n}dxds%
\end{array}%
\end{equation*}%
for all $0\leq t\leq T.$ Using Young's inequality and making some
arrangement, we deduce that%
\begin{equation*}
\begin{array}{l}
E_{W_{n}}\left( t\right) \\ 
\leq E_{W_{n}}\left( T\right) +c\dint_{0}^{T}\dint_{\Omega \cap
B_{R_{1}}}\left\vert -2\nabla \chi \nabla v_{n}-v_{n}\Delta \chi +\frac{1}{%
\sigma _{n}}\chi g_{n}\right\vert ^{2}+\left\vert \partial
_{t}v_{n}\right\vert ^{2}dxds,%
\end{array}%
\end{equation*}%
for all $0\leq t\leq T.~$The estimate above combined with $\left( \ref{proof
of proposition 5 fn converge obs 1 case}\right) ,$ $\left( \ref{energy bound
first case}\right) $ and $\left( \ref{bound1}\right) $ gives $\left( \ref%
{energy Zn bounded1}\right) .$

The next step is to show that 
\begin{equation}
\int_{0}^{T}E_{W_{n}}\left( s\right) ds\underset{n\rightarrow +\infty }{%
\longrightarrow }0.  \label{integral convergence to zero}
\end{equation}

For $\epsilon $ small enough, we have $\left( \omega \cap
B_{R_{1}},T-2\epsilon \right) $ geometrically controls $\Omega \cap
B_{R_{1}}.$ Therefore, using the control theory of the wave equation in
bounded domain, we deduce that the following observability estimate\ holds 
\begin{equation}
\begin{array}{l}
E_{W_{n}}\left( \epsilon \right) \\ 
\leq C_{\epsilon ,T}\left( \dint_{\epsilon }^{T-\epsilon }\dint_{\Omega \cap
B_{R_{1}}}a\left( x\right) \left\vert \partial _{t}v_{n}\right\vert
^{2}+\left\vert -2\nabla \chi \nabla v_{n}-v_{n}\Delta \chi +\frac{1}{\sigma
_{n}}\chi g_{n}\right\vert ^{2}dxds\right) ,%
\end{array}
\label{E Wn estimate}
\end{equation}%
(we can show this result using \cite{daou}.) Recalling%
\begin{equation*}
\nabla \chi =0\text{ on }\left\{ \left\vert x\right\vert \leq 2L\right\} 
\text{ and }Supp\chi \subset \left\{ \left\vert x\right\vert \leq
R_{1}\right\} .
\end{equation*}%
Hence $\left( \ref{exterior convergence to zero}\right) $ and $\left( \ref%
{proof of proposition 5 fn converge obs 1 case}\right) $\ give 
\begin{equation}
\int_{\epsilon }^{T-\epsilon }\int_{\Omega \cap B_{R_{1}}}\left\vert
-2\nabla \chi \nabla v_{n}-v_{n}\Delta \chi +\frac{1}{\sigma _{n}}\chi
g_{n}\right\vert ^{2}dxds\underset{n\rightarrow +\infty }{\longrightarrow }0.
\label{grad exterior conv}
\end{equation}%
Combining the estimate above with $\left( \ref{proof of proposition 5 fn
converge obs 1 case}\right) ,$ we get%
\begin{equation}
\begin{array}{c}
E_{W_{n}}\left( \epsilon \right) \underset{n\rightarrow +\infty }{%
\longrightarrow }0,%
\end{array}
\label{epsilon energy conv}
\end{equation}%
for all $\epsilon >0$ small enough, such that $\left( \omega \cap
B_{R_{1}},T-2\epsilon \right) $ geometrically controls $\Omega \cap
B_{R_{1}}.~$On the other hand the energy estimate for the nonhomogeneous
wave equation, gives 
\begin{equation}
\begin{array}{l}
E_{W_{n}}\left( s\right) \\ 
\leq 2e^{T}\left( E_{W_{n}}\left( \epsilon \right) +\dint_{\epsilon
}^{T-\epsilon }\dint_{\Omega \cap B_{R_{1}}}\left\vert -2\nabla \chi \nabla
v_{n}-v_{n}\Delta \chi +\frac{1}{\sigma _{n}}\chi g_{n}\right\vert
^{2}dxdt\right) ,%
\end{array}
\label{e Wn s convergence}
\end{equation}%
for $\epsilon \leq s\leq T-\epsilon .~$Using $\left( \ref{grad exterior conv}%
\right) $ and $\left( \ref{epsilon energy conv}\right) ,$ we see that 
\begin{equation*}
\begin{array}{l}
E_{W_{n}}\left( s\right) \\ 
\leq C\left( E_{W_{n}}\left( \epsilon \right) +\dint_{\epsilon }^{T-\epsilon
}\dint_{\Omega \cap B_{R_{1}}}\left\vert -2\nabla \chi \nabla
v_{n}-v_{n}\Delta \chi +\frac{1}{\sigma _{n}}\chi g_{n}\right\vert
^{2}dxdt\right) \underset{n\rightarrow +\infty }{\longrightarrow }0,%
\end{array}%
\end{equation*}%
for all $s\in \left[ \epsilon ,T-\epsilon \right] .$ We conclude that 
\begin{equation}
\begin{array}{c}
E_{W_{n}}\left( s\right) \underset{n\rightarrow +\infty }{\longrightarrow }0,%
\text{ for all }0<s<T.%
\end{array}
\label{1}
\end{equation}%
Using $\left( \ref{energy Zn bounded1}\right) $ and applying the dominated
convergence theorem, we obtain $\left( \ref{integral convergence to zero}%
\right) .$

Now $\left( \ref{integral convergence to zero}\right) $\ and the fact that $%
\chi =1$ on $\left\{ \left\vert x\right\vert \leq R\right\} $ along with $%
\left( \ref{a vn convergence}\right) ,$ give 
\begin{equation}
\int_{0}^{T}\int_{\Omega \cap B_{R}}\left\vert \nabla v_{n}\left( s\right)
\right\vert ^{2}+\left\vert \partial _{t}v_{n}\left( s\right) \right\vert
^{2}dxds\underset{n\rightarrow +\infty }{\longrightarrow }0.
\label{vn convergence Br}
\end{equation}%
On the other hand let $\theta \in C_{c}^{\infty }\left( 
%TCIMACRO{\U{211d} }%
%BeginExpansion
\mathbb{R}
%EndExpansion
^{d}\right) $ such that $\theta =1$ on $\left\{ \left\vert x\right\vert \leq
R_{0}\right\} $ and the support of $\theta $ is contained in $\left\{
\left\vert x\right\vert \leq R\right\} .$ Using Poincar\'{e}'s inequality,
we obtain 
\begin{eqnarray*}
&&\int_{0}^{T}\int_{\Omega \cap B_{R_{0}}}\left\vert v_{n}\left( s,x\right)
\right\vert ^{2}dxds \\
&\leq &C\int_{0}^{T}\int_{\Omega \cap B_{R}}\left\vert v_{n}\left(
s,x\right) \nabla \theta \left( x\right) \right\vert ^{2}+\left\vert \theta
\left( x\right) \nabla v_{n}\left( s,x\right) \right\vert ^{2}dxds.
\end{eqnarray*}%
The estimate above combined with $\left( \ref{vn convergence Br}\right) $
and $\left( \ref{a vn convergence}\right) $, give 
\begin{equation*}
\int_{0}^{T}\int_{\Omega \cap B_{R_{0}}}\left\vert v_{n}\left( s\right)
\right\vert ^{2}+\left\vert \nabla v_{n}\left( s\right) \right\vert
^{2}+\left\vert \partial _{t}v_{n}\left( s\right) \right\vert ^{2}dxds%
\underset{n\rightarrow +\infty }{\longrightarrow }0.
\end{equation*}%
The contradiction follows from the fact that%
\begin{eqnarray*}
1 &=&\frac{1}{\sigma _{n}^{2}}\int_{t_{n}}^{t_{n}+T}\int_{\Omega \cap
B_{R_{0}}}\varphi \left( s+t_{n}+q\left( x\right) \right) \left( \left\vert
u_{n}\right\vert ^{2}+\left\vert \nabla u_{n}\right\vert ^{2}+\left\vert
\partial _{t}u_{n}\right\vert ^{2}\right) dxds \\
&\leq &C\int_{0}^{T}\int_{\Omega \cap B_{R_{0}}}\left( \left\vert
v_{n}\right\vert ^{2}+\left\vert \nabla v_{n}\right\vert ^{2}+\left\vert
\partial _{t}v_{n}\right\vert ^{2}\right) dxds\underset{n\rightarrow +\infty 
}{\longrightarrow }0.
\end{eqnarray*}

\item[Second case ] The sequence $t_{n}\underset{n\rightarrow +\infty }{%
\longrightarrow }+\infty .~$We set%
\begin{equation*}
\begin{array}{l}
\sigma _{n}^{2}=\dint_{t_{n}}^{t_{n}+T}\dint_{\Omega \cap B_{R_{0}}}\varphi
\left( q\left( x\right) +s\right) \left( \left\vert u_{n}\right\vert
^{2}+\left\vert \nabla u_{n}\right\vert ^{2}+\left\vert \partial
_{t}u_{n}\right\vert ^{2}\right) dxds \\ 
\text{ and }v_{n}\left( t,x\right) =\frac{\left( \varphi \left( q\left(
x\right) +t_{n}+t\right) \right) ^{\frac{1}{2}}u_{n}\left( t_{n}+t,x\right) 
}{\sigma _{n}}.%
\end{array}%
\end{equation*}%
From $\left( \ref{proof proposition 1 contradiction estimate obs}\right) $,
we infer that%
\begin{equation}
\begin{array}{l}
\frac{1}{\sigma _{n}^{2}}\dint_{t_{n}}^{t_{n}+T}\dint_{\Omega }\varphi
\left( q\left( x\right) +t\right) \left( \left\vert \nabla u_{n}\left(
t\right) \right\vert ^{2}+\left\vert \partial _{t}u_{n}\left( t\right)
\right\vert ^{2}\right) dxdt\leq \frac{1}{\delta } \\ 
\text{and }\dint_{t_{n}}^{t_{n}+T}\dint_{\Omega \cap B_{R_{0}}}\left\vert
v_{n}\left( t\right) \right\vert ^{2}dxdt\leq 1,%
\end{array}
\label{proof of proposition  contradiction obs}
\end{equation}%
and%
\begin{equation}
\begin{array}{l}
\frac{1}{\sigma _{n}^{2}}\dint_{t_{n}}^{t_{n}+T}\dint_{\Omega }a\left(
x\right) \varphi \left( q\left( x\right) +s\right) \left\vert \partial
_{t}u_{n}\right\vert ^{2}dxds\underset{n\rightarrow +\infty }{%
\longrightarrow }0 \\ 
\frac{1}{\sigma _{n}^{2}}\dint_{t_{n}}^{t_{n}+T}\dint_{\Omega }\varphi
\left( q\left( x\right) +s\right) \left\vert g_{n}\left( s,x\right)
\right\vert ^{2}dxds\underset{n\rightarrow +\infty }{\longrightarrow }0 \\ 
\frac{1}{\sigma _{n}^{2}}\dint_{t_{n}}^{t_{n}+T}\dint_{\Omega }\frac{\left(
\varphi ^{\prime }\left( q\left( x\right) +s\right) \right) ^{2}}{\varphi
\left( q\left( x\right) +s\right) }a\left( x\right) \left\vert
u_{n}\right\vert ^{2}dxds\underset{n\rightarrow +\infty }{\longrightarrow }0.%
\end{array}
\label{vn contradiction}
\end{equation}%
It is clear that $v_{n}$ is a solution of the following system%
\begin{equation*}
\left\{ 
\begin{array}{lc}
\partial _{t}^{2}v_{n}-\Delta v_{n}=f_{n}\left( t,x\right) & \text{in }%
%TCIMACRO{\U{211d} }%
%BeginExpansion
\mathbb{R}
%EndExpansion
_{+}\times \Omega , \\ 
v_{n}\left( t,x\right) =0 & \text{on }%
%TCIMACRO{\U{211d} }%
%BeginExpansion
\mathbb{R}
%EndExpansion
_{+}\times \Gamma , \\ 
\left( v_{n}\left( 0\right) ,\partial _{t}v_{n}\left( 0\right) \right) \in
H_{0}^{1}\left( \Omega \right) \times L^{2}\left( \Omega \right) , & 
\end{array}%
\right.
\end{equation*}%
where%
\begin{equation*}
\begin{array}{l}
\medskip f_{n}\left( t,x\right) =\frac{1}{2\sigma _{n}}\left[ \left( \varphi
^{\prime \prime }\left( \varphi \right) ^{-\frac{1}{2}}-\frac{1}{2}\left(
\varphi ^{\prime }\right) ^{2}\varphi ^{-3/2}\right) \frac{\left\vert
x\right\vert ^{2}}{q^{2}}\right] u_{n}\left( t_{n}+t\right) \\ 
+\frac{1}{2\sigma _{n}}\left[ \left( \frac{d}{q}-\frac{\left\vert
x\right\vert ^{2}}{q^{3}}\right) \varphi ^{\prime }\left( \varphi \right) ^{-%
\frac{1}{2}}\right] u_{n}\left( t_{n}+t\right) \\ 
\medskip +\frac{1}{2\sigma _{n}}\left[ \varphi ^{\prime \prime }\left(
\varphi \right) ^{-\frac{1}{2}}-\frac{1}{2}\left( \varphi ^{\prime }\right)
^{2}\varphi ^{-3/2}\right] u_{n}\left( t_{n}+t\right) -\frac{1}{\sigma _{n}}%
\varphi ^{\frac{1}{2}}g_{n}\left( t_{n}+t,x\right) \\ 
+\frac{\varphi ^{\prime }\left( \varphi \right) ^{-\frac{1}{2}}}{\sigma _{n}}%
\left( \partial _{t}u_{n}\left( t_{n}+t\right) +\frac{x\cdot \nabla
u_{n}\left( t_{n}+t\right) }{q}\right) ,%
\end{array}%
\end{equation*}%
where $\varphi ^{\left( j\right) }\left( t,x\right) =\varphi ^{\left(
j\right) }\left( q\left( x\right) +t+t_{n}\right) ,$ for $j=0,1,2.$

Now we will show that%
\begin{equation}
\int_{0}^{T}\dint_{\Omega }\left\vert f_{n}\left( s,x\right) \right\vert
^{2}dxds\underset{n\rightarrow +\infty }{\longrightarrow }0.
\label{proof of proposition 5 fn converge obs}
\end{equation}%
Using $\left( \ref{vn contradiction}\right) $ and the fact that $\underset{%
t\rightarrow +\infty }{\lim }\left\vert \frac{\varphi ^{\prime }\left(
t\right) }{\varphi \left( t\right) }\right\vert =0,$ we obtain%
\begin{equation*}
\begin{array}{l}
\medskip \dint_{0}^{T}\dint_{\Omega }\left\vert \frac{1}{2\sigma _{n}}\left[
\left( \varphi ^{\prime \prime }\left( \varphi \right) ^{-\frac{1}{2}}-\frac{%
1}{2}\left( \varphi ^{\prime }\right) ^{2}\varphi ^{-3/2}\right) \frac{%
\left\vert x\right\vert ^{2}}{q^{2}}\right] u_{n}\left( t_{n}+t\right)
\right\vert ^{2}dxdt \\ 
+\dint_{0}^{T}\dint_{\Omega }\left\vert \frac{1}{2\sigma _{n}}\left[ \left( 
\frac{d}{q}-\frac{\left\vert x\right\vert ^{2}}{q^{3}}\right) \varphi
^{\prime }\left( \varphi \right) ^{-\frac{1}{2}}\right] u_{n}\left(
t_{n}+t\right) \right\vert ^{2}dxdt \\ 
+\dint_{0}^{T}\dint_{\Omega }\left\vert \frac{1}{2\sigma _{n}}\left[ \varphi
^{\prime \prime }\left( \varphi \right) ^{-\frac{1}{2}}-\frac{1}{2}\left(
\varphi ^{\prime }\right) ^{2}\varphi ^{-3/2}\right] u_{n}\left(
t_{n}+t\right) \right\vert ^{2}dxdt \\ 
\leq \frac{C}{\sigma _{n}^{2}}\dint_{t_{n}}^{t_{n}+T}\dint_{\Omega }\frac{%
\left( \varphi ^{\prime }\left( q\left( x\right) +s\right) \right) ^{2}}{%
\varphi \left( q\left( x\right) +s\right) }\left( 1+\left( \frac{\varphi
^{\prime }\left( t_{n}\right) }{\varphi \left( t_{n}\right) }\right)
^{2}\right) \left\vert u_{n}\right\vert ^{2}dxds \\ 
\leq \frac{C}{\sigma _{n}^{2}}\left( \frac{\varphi ^{\prime }\left(
t_{n}\right) }{\varphi \left( t_{n}\right) }\right)
^{2}\dint_{t_{n}}^{t_{n}+T}\int_{\Omega \cap B_{L}}\varphi \left( q\left(
x\right) +s\right) \left\vert u_{n}\right\vert ^{2}dxds \\ 
+\frac{C}{\epsilon _{0}\sigma _{n}^{2}}\dint_{t_{n}}^{t_{n}+T}\dint_{\Omega }%
\frac{\left( \varphi ^{\prime }\left( q\left( x\right) +s\right) \right) ^{2}%
}{\varphi \left( q\left( x\right) +s\right) }a\left( x\right) \left\vert
u_{n}\right\vert ^{2}dxds \\ 
\leq C\left( \frac{\varphi ^{\prime }\left( t_{n}\right) }{\varphi \left(
t_{n}\right) }\right) ^{2}+\frac{C}{\epsilon _{0}\sigma _{n}^{2}}%
\dint_{t_{n}}^{t_{n}+T}\dint_{\Omega }\frac{\left( \varphi ^{\prime }\left(
q\left( x\right) +s\right) \right) ^{2}}{\varphi \left( q\left( x\right)
+s\right) }a\left( x\right) \left\vert u_{n}\right\vert ^{2}dxds\underset{%
n\rightarrow +\infty }{\longrightarrow }0.%
\end{array}%
\end{equation*}%
Now we estimate the remaining term of $f_{n}.~$\ Turn into account of $%
\left( \ref{proof of proposition contradiction obs}\right) ,$ we get,%
\begin{equation*}
\begin{array}{l}
\medskip \dint_{0}^{T}\dint_{\Omega }\left\vert \frac{\varphi ^{\prime
}\left( \varphi \right) ^{-\frac{1}{2}}}{\sigma _{n}}\left( \partial
_{t}u_{n}\left( t_{n}+t\right) +\frac{x\cdot \nabla u_{n}\left(
t_{n}+t\right) }{q}\right) \right\vert ^{2}dxdt \\ 
\medskip \leq \frac{C}{\sigma _{n}^{2}}\left( \frac{\varphi ^{\prime }\left(
t_{n}\right) }{\varphi \left( t_{n}\right) }\right)
^{2}\dint_{0}^{T}\dint_{\Omega }\varphi \left( q\left( x\right) +\left(
t_{n}+t\right) \right) \left( \left\vert \partial _{t}u_{n}\left(
t_{n}+t\right) \right\vert ^{2}+\left\vert \nabla u_{n}\left( t_{n}+t\right)
\right\vert ^{2}\right) dxdt \\ 
\leq \frac{C}{\delta }\left( \frac{\varphi ^{\prime }\left( t_{n}\right) }{%
\varphi \left( t_{n}\right) }\right) ^{2}\underset{n\rightarrow +\infty }{%
\longrightarrow }0.%
\end{array}%
\end{equation*}%
The results above combined with $\left( \ref{vn contradiction}\right) $,
gives $\left( \ref{proof of proposition 5 fn converge obs}\right) .$\ 

The next step is to show the boundeness of the energy of $v_{n}.$ It is easy
to see that%
\begin{equation*}
\begin{array}{l}
\medskip \dint_{0}^{T}E_{v_{n}}\left( t\right) dt\leq \frac{c}{\sigma
_{n}^{2}}\dint_{t_{n}}^{t_{n}+T}\dint_{\Omega }\varphi \left( q\left(
x\right) +t\right) \left( \left\vert \nabla u_{n}\left( t\right) \right\vert
^{2}+\left\vert \partial _{t}u_{n}\left( t\right) \right\vert ^{2}\right)
dxdt \\ 
\medskip +\frac{c}{\sigma _{n}^{2}}\dint_{t_{n}}^{t_{n}+T}\dint_{\Omega }%
\frac{\left( \varphi ^{\prime }\left( q\left( x\right) +t\right) \right) ^{2}%
}{\varphi \left( q\left( x\right) +t\right) }\left\vert u_{n}\left( t\right)
\right\vert ^{2}dxdt.%
\end{array}%
\end{equation*}%
Now using $\left( \ref{proof of proposition contradiction obs}\right) $\ and 
$\left( \ref{vn contradiction}\right) $ we infer that there exists a
positive constant $C_{\delta }$ such that 
\begin{equation}
\int_{0}^{T}E_{v_{n}}\left( t\right) dt\leq C_{\delta },\text{ for n large
enough.}  \label{integral energy vn bound}
\end{equation}%
On the other hand, we have%
\begin{equation*}
E_{v_{n}}\left( t\right) \leq \frac{c}{t}\left( \int_{0}^{T}\left(
E_{v_{n}}\left( s\right) +s\int_{\Omega }\left\vert f_{n}\left( s,x\right)
\right\vert ^{2}dx\right) ds\right) ,
\end{equation*}%
for all $0<t\leq T.$ Turn into account of the estimate above along with $%
\left( \ref{integral energy vn bound}\right) $ and $\left( \ref{proof of
proposition 5 fn converge obs}\right) $, we obtain%
\begin{equation}
E_{v_{n}}\left( T\right) \leq C_{T,\delta },\text{ for n large enough.}
\label{bound 1 second case}
\end{equation}%
On the other hand, from the energy identity, we see that%
\begin{equation*}
E_{v_{n}}\left( t\right) \leq E_{v_{n}}\left( T\right) +\int_{0}^{T}\left(
E_{v_{n}}\left( s\right) +\int_{\Omega }\left\vert f_{n}\left( s,x\right)
\right\vert ^{2}dx\right) ds,
\end{equation*}%
for all $0\leq t\leq T.~$The estimate above combined with $\left( \ref%
{integral energy vn bound}\right) $ and $\left( \ref{bound 1 second case}%
\right) $ gives%
\begin{equation}
\underset{\lbrack 0,T]}{\sup }E_{v_{n}}\left( s\right) \leq C_{T,\delta },%
\text{ for n large enough.}  \label{energy vn bound}
\end{equation}%
The last step is to show that 
\begin{equation}
\int_{0}^{T}\dint_{\Omega }a\left( x\right) \left\vert \partial
_{t}v_{n}\right\vert ^{2}dxdt\underset{n\rightarrow +\infty }{%
\longrightarrow }0.  \label{proof of proposition 5 adtun converge obs}
\end{equation}%
We have%
\begin{equation*}
\begin{array}{l}
\dint_{0}^{T}\dint_{\Omega }a\left( x\right) \left\vert \partial
_{t}v_{n}\right\vert ^{2}dxdt\leq \frac{2}{\sigma _{n}^{2}}%
\dint_{t_{n}}^{t_{n}+T}\dint_{\Omega }\frac{\left( \varphi ^{\prime }\left(
q\left( x\right) +s\right) \right) ^{2}}{\varphi \left( q\left( x\right)
+s\right) }a\left( x\right) \left\vert u_{n}\left( s\right) \right\vert
^{2}dxds \\ 
+\frac{2}{\sigma _{n}^{2}}\dint_{t_{n}}^{t_{n}+T}\dint_{\Omega }\varphi
\left( q\left( x\right) +s\right) a\left( x\right) \left\vert \partial
_{t}u_{n}\right\vert ^{2}dxds.%
\end{array}%
\end{equation*}%
Using $\left( \ref{vn contradiction}\right) ,$ we get $\left( \ref{proof of
proposition 5 adtun converge obs}\right) .$

For the rest of the proof we have only to argue as in \cite[Proof of
proposition 2]{Daou linear}.
\end{description}
\end{proof}

\section{Proof of Theorem 1}

\subsection{Preliminary results}

Throughout this section we use the following notations:

Let $\beta $ be a real number such that 
\begin{equation*}
\begin{array}{ll}
\beta >-1 & \text{if }1<r<1+\frac{2}{d} \\ 
-1<\beta <\frac{3-r}{r-1} & \text{if }r=1+\frac{2}{d}.%
\end{array}%
\end{equation*}

Let $\psi \in C_{0}^{\infty }\left( 
%TCIMACRO{\U{211d} }%
%BeginExpansion
\mathbb{R}
%EndExpansion
^{d}\right) $ such that $0\leq \psi \leq 1$ and 
\begin{equation*}
\psi \left( x\right) =\left\{ 
\begin{array}{ll}
1 & \text{for }\left\vert x\right\vert \leq L \\ 
0 & \text{for }\left\vert x\right\vert \geq 2L%
\end{array}%
\right. .
\end{equation*}

Finally we set%
\begin{equation*}
\begin{array}{l}
\varphi \left( s\right) =\ln ^{\beta +1}\left( b+s\right) ,\text{ }f\left(
s\right) =\frac{\ln ^{\beta }\left( b+s\right) }{b+s},\text{ }f_{1}\left(
s\right) =\frac{\ln ^{\beta }\left( b+s\right) }{\left( b+s\right) ^{2}} \\ 
\text{and }f_{2}\left( s\right) =\frac{\ln ^{\beta -r+1}\left( b+s\right) }{%
\left( b+s\right) ^{r}},%
\end{array}%
\end{equation*}%
with%
\begin{equation*}
\begin{array}{c}
\ln b=\max \left( \left( 2\left( r+1\right) \right) ^{r+1},\frac{\beta +1-r}{%
r-1},\left( 8\left( r+1\right) \left( \beta +1\right) \right) ^{r+1}\right) .%
\end{array}%
\end{equation*}

\begin{proposition}
\label{proposition onservability global log}We assume that Hyp A holds and $%
(\omega $,$T)$ geometrically controls $\Omega $. Let $\beta >-1$. Let $%
\delta >0$ and $R_{0}>L$. There exists $C_{T,\delta }=C\left( T,\delta
,R_{0}\right) >0$, such that the following inequality%
\begin{equation}
\begin{array}{l}
\medskip \dint_{t}^{t+T}\dint_{\Omega \cap B_{R_{0}}}f\left( q\left(
x\right) +s\right) \left( \left\vert u\right\vert ^{2}+\left\vert \nabla
u\right\vert ^{2}+\left\vert \partial _{t}u\right\vert ^{2}\right) dxds \\ 
\medskip \leq C_{T,\delta }\dint_{t}^{t+T}\dint_{\Omega }a\left( x\right)
f\left( q\left( x\right) +s\right) \left( \left\vert \partial
_{t}u\right\vert ^{2}+\left\vert \partial _{t}u\right\vert ^{2r}\right) dxds
\\ 
\medskip +C_{T,\delta }\dint_{t}^{t+T}\dint_{\Omega }a\left( x\right)
f_{1}^{\prime }\left( q\left( x\right) +s\right) \left\vert u\right\vert
^{2}dxds \\ 
+\delta \dint_{t}^{t+T}\dint_{\Omega }f\left( q\left( x\right) +s\right)
\left( \left\vert \nabla u\left( s\right) \right\vert ^{2}+\left\vert
\partial _{t}u\left( s\right) \right\vert ^{2}\right) dxds,%
\end{array}
\label{observability gradient log}
\end{equation}%
holds for every $t\geq 0$ and for all $u$ solution of $\left( \ref{system}%
\right) $ with initial data $\left( u_{0},u_{1}\right) $ in $H_{0}^{1}\left(
\Omega \right) \cap H^{2}\left( \Omega \right) \times H_{0}^{1}\left( \Omega
\right) .$
\end{proposition}

\begin{proof}
In view of $f\in L^{\infty }\left( 
%TCIMACRO{\U{211d} }%
%BeginExpansion
\mathbb{R}
%EndExpansion
_{+}\right) ,$ we have $E_{f}\left( u\right) \left( 0\right) <\infty .$ On
the other hand, it is clear that $f^{\prime }\in L^{\infty }\left( 
%TCIMACRO{\U{211d} }%
%BeginExpansion
\mathbb{R}
%EndExpansion
_{+}\right) $ and there exists a positive constant $K,$ such that 
\begin{equation*}
\underset{%
%TCIMACRO{\U{211d} }%
%BeginExpansion
\mathbb{R}
%EndExpansion
_{+}}{\sup }\left\vert \frac{f^{\prime \prime }\left( t\right) }{f^{\prime
}\left( t\right) }\right\vert \leq K.
\end{equation*}%
In addition the function $t\longmapsto \left\vert \frac{f^{\prime }\left(
t\right) }{f\left( t\right) }\right\vert $ is decreasing and $\underset{%
t\rightarrow +\infty }{\lim }\left\vert \frac{f^{\prime }\left( t\right) }{%
f\left( t\right) }\right\vert =0$. Moreover there exists $C>0,$ such that%
\begin{equation*}
\begin{array}{l}
\frac{\left( f^{\prime }\left( t\right) \right) ^{2}}{f\left( t\right) }\leq
C\text{ }\left( -f_{1}^{\prime }\left( t\right) \right) ,\text{ for all }%
t\geq 0.%
\end{array}%
\end{equation*}%
Since%
\begin{equation*}
\partial _{t}u\in L^{\infty }\left( 
%TCIMACRO{\U{211d} }%
%BeginExpansion
\mathbb{R}
%EndExpansion
_{+},H_{0}^{1}\left( \Omega \right) \right) ,
\end{equation*}%
therefore, from Sobolev imbedding, we deduce that%
\begin{equation*}
\sqrt{a\left( x\right) f\left( q\left( x\right) +s\right) }\left\vert
\partial _{t}u\right\vert ^{r}\in L_{loc}^{2}\left( 
%TCIMACRO{\U{211d} }%
%BeginExpansion
\mathbb{R}
%EndExpansion
_{+},L^{2}\left( \Omega \right) \right) .
\end{equation*}%
By taking into account of the results above, we can use proposition \ref%
{proposition onservability global} and we obtain $\left( \ref{observability
gradient log}\right) .$This finishes the proof of the proposition.
\end{proof}

In order to prove theorem 1 we need the following result.

\begin{lemma}
\label{lemma xt log} Let $T>0$ and $u$ be the solution of $\left( \ref%
{system}\right) $ with initial data in $H_{0}^{1}\left( \Omega \right) \cap
H^{2}\left( \Omega \right) \times H_{0}^{1}\left( \Omega \right) $ such that 
\begin{equation}
E_{\varphi }\left( u\right) \left( 0\right) =\dint_{\Omega }\varphi \left(
q\left( x\right) \right) \left( \left\vert \nabla u_{0}\right\vert
^{2}+\left\vert u_{1}\right\vert ^{2}\right) dx<\infty .
\label{lemma xt efi bound log}
\end{equation}%
We set $\chi =1-\psi ~$and%
\begin{equation}
\begin{array}{l}
\medskip X\left( t\right) =\dint_{\Omega }f\left( q\left( x\right) +t\right)
\chi ^{2}\left( x\right) u\left( t\right) \partial _{t}u\left( t\right) dx+%
\frac{k_{1}}{2}\dint_{\Omega }a\left( x\right) f_{1}\left( q\left( x\right)
+t\right) \left\vert u\left( t\right) \right\vert ^{2}dx \\ 
+\dint_{\Omega }a\left( x\right) f_{2}\left( q\left( x\right) +t\right)
\left\vert u\left( t\right) \right\vert ^{r+1}dx+\frac{k}{2}\dint_{\Omega
}\ln ^{\beta +1}\left( b+q\left( x\right) +t\right) \left( \left\vert \nabla
u\right\vert ^{2}+\left\vert \partial _{t}u\right\vert ^{2}\right) dx,%
\end{array}
\label{Xt definition}
\end{equation}%
where%
\begin{equation*}
k=\frac{1}{4\left( \beta +1\right) },\text{ }k_{1}>0.
\end{equation*}%
We have%
\begin{equation}
\begin{array}{l}
\medskip X\left( t+T\right) -X\left( t\right) +\frac{1}{4}%
\dint_{t}^{t+T}\dint_{\Omega }f\left( q\left( x\right) +s\right) \left(
\left\vert \nabla u\right\vert ^{2}+\left\vert \partial _{t}u\right\vert
^{2}\right) dxds \\ 
\medskip -\left( \frac{k_{1}}{4}-\frac{2\left( 1+\left\vert \beta
\right\vert \right) }{\epsilon _{0}}\right) \dint_{t}^{t+T}\dint_{\Omega
}a\left( x\right) f_{1}^{\prime }\left( q\left( x\right) +s\right)
\left\vert u\right\vert ^{2}dxds \\ 
\medskip -\frac{1}{2}\dint_{t}^{t+T}\dint_{\Omega }a\left( x\right)
f_{2}^{\prime }\left( q\left( x\right) +s\right) \left\vert u\right\vert
^{r+1}dxds \\ 
\medskip +\frac{1}{8\left( \beta +1\right) }\dint_{t}^{t+T}\dint_{\Omega
}a\left( x\right) \ln ^{\beta +1}\left( b+q\left( x\right) +s\right)
\left\vert \partial _{t}u\right\vert ^{r+1}dxds \\ 
\medskip \leq \left( 3+\frac{1}{2}\left\Vert \nabla \chi ^{2}\right\Vert
_{\infty }\right) \dint_{t}^{t+T}\dint_{\Omega \cap B_{2L}}f\left( q\left(
x\right) +s\right) \left( \left\vert u\right\vert ^{2}+\left\vert \nabla
u\right\vert ^{2}+\left\vert \partial _{t}u\right\vert ^{2}\right) dxds \\ 
+2\left( \frac{1}{\epsilon _{0}}+\frac{4\left( 1+\left\vert \beta
\right\vert \right) }{\epsilon _{0}^{2}k_{1}}+4k_{1}\right)
\dint_{t}^{t+T}\dint_{\Omega }a\left( x\right) f\left( q\left( x\right)
+t\right) \left\vert \partial _{t}u\right\vert ^{2}dxds.%
\end{array}
\label{X t estimate log}
\end{equation}
\end{lemma}

\begin{proof}
First $\left( \ref{lemma xt efi bound log}\right) $ allows us to apply $%
\left( \ref{weighted energy}\right) \ $and to obtain 
\begin{equation*}
\begin{array}{l}
\medskip E_{\varphi }\left( u\right) \left( t+T\right)
+\dint_{t}^{t+T}\dint_{\Omega }a\left( x\right) \varphi \left( q\left(
x\right) +s\right) \left\vert \partial _{t}u\right\vert ^{r+1}dxds \\ 
\leq E_{\varphi }\left( u\right) \left( t\right) +\left( \beta +1\right)
\dint_{t}^{t+T}\dint_{\Omega }f\left( q\left( x\right) +s\right) \left(
\left\vert \nabla u\right\vert ^{2}+\left\vert \partial _{t}u\right\vert
^{2}\right) dxds.%
\end{array}%
\end{equation*}%
We set%
\begin{equation*}
\begin{array}{l}
\medskip X_{0}\left( t\right) =\dint_{\Omega }f\left( q\left( x\right)
+t\right) \chi ^{2}\left( x\right) u\left( t\right) \partial _{t}u\left(
t\right) dx+\frac{k_{1}}{2}\dint_{\Omega }a\left( x\right) f_{1}\left(
q\left( x\right) +t\right) \left\vert u\left( t\right) \right\vert ^{2}dx \\ 
+\dint_{\Omega }a\left( x\right) f_{2}\left( q\left( x\right) +t\right)
\left\vert u\left( t\right) \right\vert ^{r+1}dx.%
\end{array}%
\end{equation*}%
Therefore, we have%
\begin{equation}
\begin{array}{l}
\medskip \frac{d}{dt}X_{0}\left( t\right) =\dint_{\Omega }\left( \left\vert
\partial _{t}u\left( t\right) \right\vert ^{2}-\left\vert \nabla u\left(
t\right) \right\vert ^{2}-a\left( x\right) \left\vert \partial _{t}u\left(
t\right) \right\vert ^{r-1}u\partial _{t}u\left( t\right) \right) \chi
^{2}\left( x\right) f\left( q\left( x\right) +t\right) dx \\ 
-\dint_{\Omega }\chi ^{2}\left( x\right) f^{\prime }\left( q\left( x\right)
+t\right) u\left( t\right) \frac{x\cdot \nabla u\left( t\right) }{q\left(
x\right) }+f\left( q\left( x\right) +t\right) \nabla \chi ^{2}\left(
x\right) \nabla u\left( t\right) dx \\ 
\medskip +\dint_{\Omega }f^{\prime }\left( q\left( x\right) +t\right) \chi
^{2}\left( x\right) u\left( t\right) \partial _{t}u\left( t\right) dx \\ 
\medskip +k_{1}\left( \dint_{\Omega }a\left( x\right) f_{1}\left( q\left(
x\right) +t\right) u\left( t\right) \partial _{t}u\left( t\right) dx+\frac{1%
}{2}\dint_{\Omega }a\left( x\right) f_{1}^{\prime }\left( q\left( x\right)
+t\right) \left\vert u\left( t\right) \right\vert ^{2}dx\right) \\ 
+\dint_{\Omega }a\left( x\right) f_{2}^{\prime }\left( q\left( x\right)
+t\right) \left\vert u\right\vert ^{r+1}dx+\left( r+1\right) \dint_{\Omega
}a\left( x\right) f_{2}\left( q\left( x\right) +t\right) \left\vert
u\right\vert ^{r-1}u\partial _{t}udx.%
\end{array}
\label{X0 log}
\end{equation}%
A direct computation gives%
\begin{equation*}
\begin{array}{l}
\frac{\left( f^{\prime }\left( s\right) \right) ^{2}}{f\left( s\right) }\leq
\left( 1+\left\vert \beta \right\vert \right) \frac{\ln ^{\beta }\left(
b+s\right) }{\left( b+s\right) ^{3}}\leq -\left( 1+\left\vert \beta
\right\vert \right) f_{1}^{\prime }\left( s\right) \\ 
\text{and} \\ 
\frac{\left( f_{1}\left( s\right) \right) ^{2}}{f\left( s\right) }=\frac{\ln
^{\beta }\left( b+s\right) }{\left( b+s\right) ^{3}}\leq -f_{1}^{\prime
}\left( s\right) .%
\end{array}%
\end{equation*}%
We note that $\left\Vert \chi \right\Vert _{\infty }\leq 1.$ Using Young's
inequality and the fact that the support of $\chi $ is contained in $\left\{
\left\vert x\right\vert \geq L\right\} $ and 
\begin{equation*}
a\left( x\right) >\epsilon _{0}>0\text{ for }\left\vert x\right\vert \geq L,
\end{equation*}%
we deduce that%
\begin{equation*}
\begin{array}{l}
\medskip \left\vert \dint_{\Omega }f^{\prime }\left( q\left( x\right)
+t\right) \chi ^{2}\left( x\right) u\left( t\right) \partial _{t}u\left(
t\right) dx\right\vert \\ 
\leq -\frac{k_{1}}{8}\dint_{\Omega }a\left( x\right) f_{1}^{\prime }\left(
q\left( x\right) +t\right) \left\vert u\left( t\right) \right\vert ^{2}dx+%
\frac{8\left( 1+\left\vert \beta \right\vert \right) }{\epsilon _{0}^{2}k_{1}%
}\dint_{\Omega }a\left( x\right) f\left( q\left( x\right) +t\right)
\left\vert \partial _{t}u\left( t\right) \right\vert ^{2}dx,%
\end{array}%
\end{equation*}%
and%
\begin{equation*}
\begin{array}{l}
\medskip \left\vert k_{1}\dint_{\Omega }a\left( x\right) f_{1}\left( q\left(
x\right) +t\right) u\left( t\right) \partial _{t}u\left( t\right)
dx\right\vert \\ 
\leq -\frac{k_{1}}{8}\dint_{\Omega }a\left( x\right) f_{1}^{\prime }\left(
q\left( x\right) +t\right) \left\vert u\left( t\right) \right\vert
^{2}dx+8k_{1}\dint_{\Omega }a\left( x\right) f\left( q\left( x\right)
+t\right) \left\vert \partial _{t}u\left( t\right) \right\vert ^{2}dx.%
\end{array}%
\end{equation*}%
Using the same arguments we also deduce that%
\begin{equation*}
\begin{array}{l}
\dint_{\Omega }\chi ^{2}\left( x\right) f^{\prime }\left( q\left( x\right)
+t\right) u\left( t\right) \frac{x\cdot \nabla u\left( t\right) }{q\left(
x\right) }dx \\ 
\leq \frac{1}{2}\dint_{\Omega }f\left( q\left( x\right) +t\right) \left\vert
\nabla u\left( t\right) \right\vert ^{2}dx-\frac{2\left( 1+\left\vert \beta
\right\vert \right) }{\epsilon _{0}}\dint_{\Omega }a\left( x\right)
f_{1}^{\prime }\left( q\left( x\right) +t\right) \left\vert u\left( t\right)
\right\vert ^{2}dx.%
\end{array}%
\end{equation*}

Since the support of $\psi $ is contained in $\left\{ \left\vert
x\right\vert \leq 2L\right\} $ and 
\begin{equation*}
a\left( x\right) >\epsilon _{0}\text{ for }\left\vert x\right\vert \geq L,
\end{equation*}%
therefore we see that 
\begin{equation*}
\begin{array}{l}
\medskip \dint_{\Omega }\left( \left\vert \partial _{t}u\left( t\right)
\right\vert ^{2}-\left\vert \nabla u\left( t\right) \right\vert ^{2}\right)
\chi ^{2}\left( x\right) f\left( q\left( x\right) +t\right) dx \\ 
=\dint_{\Omega }f\left( q\left( x\right) +t\right) \left( 1-2\psi \left(
x\right) +\psi ^{2}\left( x\right) \right) \left( \left\vert \partial
_{t}u\left( t\right) \right\vert ^{2}-\left\vert \nabla u\left( t\right)
\right\vert ^{2}\right) dx \\ 
\leq \frac{2}{\epsilon _{0}}\dint_{\Omega }a\left( x\right) f\left( q\left(
x\right) +t\right) \left\vert \partial _{t}u\left( t\right) \right\vert
^{2}dx \\ 
-\dint_{\Omega }f\left( q\left( x\right) +t\right) \left( \left\vert
\partial _{t}u\left( t\right) \right\vert ^{2}+\left\vert \nabla u\left(
t\right) \right\vert ^{2}\right) dx \\ 
+3\dint_{\Omega \cap B_{2L}}f\left( q\left( x\right) +t\right) \left(
\left\vert \partial _{t}u\left( t\right) \right\vert ^{2}+\left\vert \nabla
u\left( t\right) \right\vert ^{2}\right) dx.%
\end{array}%
\end{equation*}%
We note that the support of $\nabla \chi ^{2}$ is contained in $\left\{
\left\vert x\right\vert \leq 2L\right\} $, using Young's inequality, we
deduce that%
\begin{equation*}
\begin{array}{l}
\left\vert -\dint_{\Omega }f\left( q\left( x\right) +t\right) u\left(
t\right) \nabla \chi ^{2}\left( x\right) \nabla u\left( t\right)
dx\right\vert \\ 
\leq \frac{1}{2}\left\Vert \nabla \chi ^{2}\right\Vert _{\infty
}\int_{\Omega \cap B_{2L}}f\left( q\left( x\right) +t\right) \left(
\left\vert u\left( t\right) \right\vert ^{2}+\left\vert \nabla u\left(
t\right) \right\vert ^{2}\right) dx.%
\end{array}%
\end{equation*}%
Since 
\begin{equation*}
\ln b\geq \frac{\beta +1-r}{r-1},
\end{equation*}%
therefore a direct computation gives%
\begin{equation*}
\begin{array}{l}
-f_{2}\left( s\right) \geq \frac{\ln ^{\beta -r+1}\left( b+s\right) }{\left(
b+s\right) ^{r+1}} \\ 
\left( f\left( s\right) \right) ^{r+1}\ln ^{-r\left( \beta +1\right) }\left(
b+s\right) \leq \frac{-f_{2}^{\prime }\left( s\right) }{\ln \left(
b+s\right) } \\ 
\left( f_{2}\left( s\right) \right) ^{\frac{r+1}{r}}\ln ^{-\frac{\beta +1}{r}%
}\left( b+s\right) \leq \frac{-f_{2}^{\prime }\left( s\right) }{\ln \left(
b+s\right) }.%
\end{array}%
\end{equation*}%
Now we can estimate the last term of the RHS of $\left( \ref{X0 log}\right)
.~$H\"{o}lder's inequality along with Young's inequality, leads to%
\begin{equation*}
\begin{array}{l}
\medskip \dint_{\Omega }a\left( x\right) f\left( q\left( x\right) +s\right)
\left\vert \partial _{t}u\left( t\right) \right\vert ^{r-1}u\partial _{t}udx
\\ 
\medskip \leq \left( \ln b\right) ^{-\frac{1}{r+1}}\left( \dint_{\Omega
}a\left( x\right) \ln ^{\beta +1}\left( b+q\left( x\right) +s\right)
\left\vert \partial _{t}u\right\vert ^{r+1}dx\right) ^{\frac{r}{r+1}}\left(
-\dint_{\Omega }a\left( x\right) f_{2}^{\prime }\left( q\left( x\right)
+s\right) \left\vert u\right\vert ^{r+1}dx\right) ^{\frac{1}{r+1}} \\ 
\leq \left( \ln b\right) ^{-\frac{1}{r+1}}\dint_{\Omega }a\left( x\right)
\ln ^{\beta +1}\left( b+q\left( x\right) +s\right) \left\vert \partial
_{t}u\right\vert ^{r+1}dx-\left( \ln b\right) ^{-\frac{1}{r+1}}\dint_{\Omega
}a\left( x\right) f_{2}^{\prime }\left( q\left( x\right) +s\right)
\left\vert u\right\vert ^{r+1}dx,%
\end{array}%
\end{equation*}%
and%
\begin{equation*}
\begin{array}{l}
\medskip \left( r+1\right) \dint_{\Omega }a\left( x\right) f_{2}\left(
q\left( x\right) +s\right) \left\vert u\right\vert ^{r-1}u\partial _{t}udx
\\ 
\medskip \leq \left( r+1\right) \left( \ln b\right) ^{-\frac{r}{r+1}}\left(
\dint_{\Omega }a\left( x\right) \ln ^{\beta +1}\left( b+q\left( x\right)
+s\right) \left\vert \partial _{t}u\right\vert ^{r+1}dx\right) ^{\frac{1}{r+1%
}} \\ 
\medskip \times \left( -\dint_{\Omega }a\left( x\right) f_{2}^{\prime
}\left( q\left( x\right) +s\right) \left\vert u\right\vert ^{r+1}dx\right) ^{%
\frac{r}{r+1}} \\ 
\leq \left( \ln b\right) ^{-\frac{1}{r+1}}\dint_{\Omega }a\left( x\right)
\ln ^{\beta +1}\left( b+q\left( x\right) +s\right) \left\vert \partial
_{t}u\right\vert ^{r+1}dx-r\left( \ln b\right) ^{-\frac{1}{r+1}%
}\dint_{\Omega }a\left( x\right) f_{2}^{\prime }\left( q\left( x\right)
+s\right) \left\vert u\right\vert ^{r+1}dx.%
\end{array}%
\end{equation*}%
Thus%
\begin{equation*}
\begin{array}{l}
\medskip \dint_{t}^{t+T}\dint_{\Omega }a\left( x\right) f\left( q\left(
x\right) +s\right) \left\vert \partial _{t}u\right\vert ^{r-1}u\partial
_{t}udxds+\left( r+1\right) \dint_{t}^{t+T}\dint_{\Omega }a\left( x\right)
f_{2}\left( q\left( x\right) +s\right) \left\vert u\right\vert
^{r-1}u\partial _{t}udxds \\ 
\medskip \leq \left( r+1\right) \left( \ln b\right) ^{-\frac{1}{r+1}%
}\dint_{t}^{t+T}\dint_{\Omega }a\left( x\right) \ln ^{\beta +1}\left(
b+q\left( x\right) +s\right) \left\vert \partial _{t}u\right\vert ^{r+1}dxds
\\ 
-\left( r+1\right) \left( \ln b\right) ^{-\frac{1}{r+1}}\dint_{t}^{t+T}%
\dint_{\Omega }a\left( x\right) f_{2}^{\prime }\left( q\left( x\right)
+s\right) \left\vert u\right\vert ^{r+1}dxdds.%
\end{array}%
\end{equation*}%
Collecting the inequalities above, making some arrangement in $\left( \ref%
{X0 log}\right) $ and integrating the result between $t$ and $t+T$, we end
up with%
\begin{equation*}
\begin{array}{l}
\medskip X\left( t+T\right) -X\left( t\right) +\left( \frac{1}{2}-\left(
1+\beta \right) k\right) \dint_{t}^{t+T}\dint_{\Omega }f\left( q\left(
x\right) +s\right) \left( \left\vert \nabla u\right\vert ^{2}+\left\vert
\partial _{t}u\right\vert ^{2}\right) dxds \\ 
\medskip -\left( \frac{k_{1}}{4}-\frac{2\left( 1+\left\vert \beta
\right\vert \right) }{\epsilon _{0}}\right) \dint_{t}^{t+T}\dint_{\Omega
}a\left( x\right) f_{1}^{\prime }\left( q\left( x\right) +s\right)
\left\vert u\left( s\right) \right\vert ^{2}dxds \\ 
\medskip -\left( 1-\left( r+1\right) \left( \ln b\right) ^{-\frac{1}{r+1}%
}\right) \dint_{t}^{t+T}\dint_{\Omega }a\left( x\right) f_{2}^{\prime
}\left( q\left( x\right) +s\right) \left\vert u\right\vert ^{r+1}dxds \\ 
\medskip +\left( k-\left( r+1\right) \left( \ln b\right) ^{-\frac{1}{r+1}%
}\right) \dint_{t}^{t+T}\dint_{\Omega }a\left( x\right) \ln ^{\beta
+1}\left( b+q\left( x\right) +s\right) \left\vert \partial _{t}u\right\vert
^{r+1}dxds \\ 
\medskip \leq \left( 3+\frac{1}{2}\left\Vert \nabla \chi ^{2}\right\Vert
_{\infty }\right) \left( \dint_{t}^{t+T}\dint_{\Omega \cap B_{2L}}f\left(
q\left( x\right) +s\right) \left( \left\vert u\right\vert ^{2}+\left\vert
\nabla u\right\vert ^{2}+\left\vert \partial _{t}u\right\vert ^{2}\right)
dxds\right) \\ 
+\left( \frac{2}{\epsilon _{0}}+\frac{8\left( 1+\left\vert \beta \right\vert
\right) }{\epsilon _{0}^{2}k_{1}}+8k_{1}\right) \dint_{t}^{t+T}\dint_{\Omega
}a\left( x\right) f\left( q\left( x\right) +s\right) \left\vert \partial
_{t}u\right\vert ^{2}dxds.%
\end{array}%
\end{equation*}%
Using the fact that $k=\frac{1}{4\left( \beta +1\right) }$ and 
\begin{equation*}
\begin{array}{l}
\ln b\geq \max \left( \left( 2\left( r+1\right) \right) ^{r+1},\left(
8\left( r+1\right) \left( \beta +1\right) \right) ^{r+1}\right) ,%
\end{array}%
\end{equation*}%
we obtain $\left( \ref{X t estimate log}\right) .$
\end{proof}

\subsection{Proof of Theorem 1}

We assume that Hyp A holds and $\omega $ satisfies the GCC. We set $\gamma
=\beta +1.$ Let $u$ be a solution of $\left( \ref{system}\right) $ with
initial data in $H_{0}^{1}\left( \Omega \right) \cap H^{2}\left( \Omega
\right) \times H_{0}^{1}\left( \Omega \right) $ such that 
\begin{equation*}
E_{\varphi }\left( u\right) \left( 0\right) =\dint_{\Omega }\ln ^{\beta
+1}\left( 1+q\left( x\right) \right) \left( \left\vert \nabla
u_{0}\right\vert ^{2}+\left\vert u_{1}\right\vert ^{2}\right) dx<\infty .
\end{equation*}%
Let $T>0$ such that the observability estimate $\left( \ref{observability
gradient log}\right) $ holds. First we estimate the first term of the RHS of 
$\left( \ref{X t estimate log}\right) .$\ Using the observability estimate $%
\left( \ref{observability gradient log}\right) $, we see that%
\begin{equation}
\begin{array}{l}
\medskip X\left( t+T\right) -X\left( t\right) +\left( \frac{1}{4}-\left(
3+\left\Vert \nabla \chi ^{2}\right\Vert _{\infty }\right) \delta \right)
\dint_{t}^{t+T}\dint_{\Omega }f\left( q\left( x\right) +s\right) \left(
\left\vert \nabla u\right\vert ^{2}+\left\vert \partial _{t}u\right\vert
^{2}\right) dxds \\ 
\medskip -\left( \frac{k_{1}}{4}-\frac{2\left( 1+\left\vert \beta
\right\vert \right) }{\epsilon _{0}}-\left( 3+\left\Vert \nabla \chi
^{2}\right\Vert _{\infty }\right) C_{T,\delta }\right)
\dint_{t}^{t+T}\dint_{\Omega }a\left( x\right) f_{1}^{\prime }\left( q\left(
x\right) +s\right) \left\vert u\right\vert ^{2}dxds \\ 
\medskip -\frac{1}{2}\dint_{t}^{t+T}\dint_{\Omega }a\left( x\right)
f_{2}^{\prime }\left( q\left( x\right) +s\right) \left\vert u\right\vert
^{r+1}dxds \\ 
\medskip +\frac{1}{8\left( \beta +1\right) }\dint_{t}^{t+T}\dint_{\Omega
}a\left( x\right) \ln ^{\beta +1}\left( b+q\left( x\right) +s\right)
\left\vert \partial _{t}u\right\vert ^{r+1}dxds \\ 
\medskip \leq k_{3}\dint_{t}^{t+T}\dint_{\Omega }a\left( x\right) f\left(
q\left( x\right) +s\right) \left( \left\vert \partial _{t}u\right\vert
^{2}+\left\vert \partial _{t}u\right\vert ^{2r}\right) dxds,%
\end{array}
\label{proof thm 1 estimate prefinal log}
\end{equation}%
for every $t\geq 0,$ where $k_{3}=2\left( \frac{1}{\epsilon _{0}}+\frac{%
4\left( 1+\left\vert \beta \right\vert \right) }{\epsilon _{0}^{2}k_{1}}%
+4k_{1}+2\left( 3+\left\Vert \nabla \chi ^{2}\right\Vert _{\infty }\right)
C_{T,\delta }\right) .$

On the other hand, using Young's inequality we get%
\begin{equation}
\begin{array}{l}
\medskip X\left( t\right) \leq \left( \frac{k_{1}}{2}+\frac{1}{\epsilon
_{0}\epsilon }\right) \dint_{\Omega }a\left( x\right) f_{1}\left( q\left(
x\right) +t\right) \left\vert u\left( t\right) \right\vert ^{2}dx \\ 
\medskip +\left( k+\epsilon \right) \dint_{\Omega }\ln ^{\beta +1}\left(
b+q\left( x\right) +t\right) \left( \left\vert \nabla u\left( t\right)
\right\vert ^{2}+\left\vert \partial _{t}u\left( t\right) \right\vert
^{2}\right) dx \\ 
+\dint_{\Omega }a\left( x\right) f_{2}\left( q\left( x\right) +t\right)
\left\vert u\left( t\right) \right\vert ^{r+1}dx%
\end{array}
\label{k estimate sup log}
\end{equation}%
and%
\begin{equation}
\begin{array}{l}
\medskip X\left( t\right) \geq \left( \frac{k_{1}}{2}-\frac{1}{\epsilon
_{0}\epsilon }\right) \dint_{\Omega }a\left( x\right) f_{1}\left( q\left(
x\right) +t\right) \left\vert u\left( t\right) \right\vert ^{2}dx \\ 
\medskip +\left( k-\epsilon \right) \dint_{\Omega }\ln ^{\beta +1}\left(
b+q\left( x\right) +t\right) \left( \left\vert \nabla u\left( t\right)
\right\vert ^{2}+\left\vert \partial _{t}u\left( t\right) \right\vert
^{2}\right) dx \\ 
+\dint_{\Omega }a\left( x\right) f_{2}\left( q\left( x\right) +t\right)
\left\vert u\left( t\right) \right\vert ^{r+1}dx,%
\end{array}
\label{k estimate inf log}
\end{equation}%
for all $\epsilon >0.$ We choose (by taking into account of the order below)%
\begin{equation*}
\begin{array}{l}
\medskip \delta \text{ such that }\frac{1}{4}-\left( 3+\left\Vert \nabla
\chi ^{2}\right\Vert _{\infty }\right) \delta =\frac{1}{8}, \\ 
\medskip \epsilon \text{ such that }k-\epsilon \geq \frac{1}{16\left( \beta
+1\right) }, \\ 
k_{1}\text{ such that }\frac{k_{1}}{2}-\frac{1}{\epsilon _{0}\epsilon }\geq 1%
\text{ and }\frac{k_{1}}{4}-\frac{2\left( 1+\left\vert \beta \right\vert
\right) }{\epsilon _{0}}-\left( 3+\left\Vert \nabla \chi ^{2}\right\Vert
_{\infty }\right) C_{T,\delta }\geq 1.%
\end{array}%
\end{equation*}%
Therefore%
\begin{equation}
\begin{array}{l}
\medskip X\left( t\right) \geq \dint_{\Omega }a\left( x\right) f_{1}\left(
q\left( x\right) +t\right) \left\vert u\left( t\right) \right\vert ^{2}dx \\ 
\medskip +\frac{1}{16\left( \beta +1\right) }\dint_{\Omega }\ln ^{\beta
+1}\left( b+q\left( x\right) +t\right) \left( \left\vert \nabla u\left(
t\right) \right\vert ^{2}+\left\vert \partial _{t}u\left( t\right)
\right\vert ^{2}\right) dx \\ 
+\dint_{\Omega }a\left( x\right) f_{2}\left( q\left( x\right) +t\right)
\left\vert u\left( t\right) \right\vert ^{r+1}dx.%
\end{array}
\label{k estimate inf log final}
\end{equation}%
and%
\begin{equation}
\begin{array}{l}
\medskip X\left( t+T\right) -X\left( t\right) +\frac{1}{8}%
\dint_{t}^{t+T}\dint_{\Omega }f\left( q\left( x\right) +s\right) \left(
\left\vert \nabla u\right\vert ^{2}+\left\vert \partial _{t}u\right\vert
^{2}\right) dxds \\ 
\medskip -\dint_{t}^{t+T}\dint_{\Omega }a\left( x\right) f_{1}^{\prime
}\left( q\left( x\right) +s\right) \left\vert u\right\vert ^{2}dxds-\frac{1}{%
2}\dint_{t}^{t+T}\dint_{\Omega }a\left( x\right) f_{2}^{\prime }\left(
q\left( x\right) +s\right) \left\vert u\right\vert ^{r+1}dxds \\ 
\medskip +\frac{1}{8\left( \beta +1\right) }\dint_{t}^{t+T}\dint_{\Omega
}a\left( x\right) \ln ^{\beta +1}\left( b+q\left( x\right) +s\right)
\left\vert \partial _{t}u\right\vert ^{r+1}dxds \\ 
\leq k_{3}\dint_{t}^{t+T}\dint_{\Omega }a\left( x\right) f\left( q\left(
x\right) +s\right) \left( \left\vert \partial _{t}u\right\vert
^{2}+\left\vert \partial _{t}u\right\vert ^{2r}\right) dxds,%
\end{array}
\label{Xt final estimate log}
\end{equation}%
for every $t\geq 0.$ Thus%
\begin{equation}
\begin{array}{l}
\medskip X\left( nT\right) +\frac{1}{8}\dint_{0}^{nT}\dint_{\Omega }f\left(
q\left( x\right) +s\right) \left( \left\vert \nabla u\right\vert
^{2}+\left\vert \partial _{t}u\right\vert ^{2}\right) dxds \\ 
\medskip -\dint_{0}^{nT}\dint_{\Omega }a\left( x\right) f_{1}^{\prime
}\left( q\left( x\right) +s\right) \left\vert u\right\vert ^{2}dxds-\frac{1}{%
2}\dint_{0}^{nT}\dint_{\Omega }a\left( x\right) f_{2}^{\prime }\left(
q\left( x\right) +s\right) \left\vert u\right\vert ^{r+1}dxds \\ 
\medskip +\frac{1}{8\left( \beta +1\right) }\dint_{0}^{nT}\dint_{\Omega
}a\left( x\right) \ln ^{\beta +1}\left( b+q\left( x\right) +s\right)
\left\vert \partial _{t}u\right\vert ^{r+1}dxds \\ 
\leq k_{3}\dint_{0}^{nT}\dint_{\Omega }a\left( x\right) f\left( q\left(
x\right) +s\right) \left( \left\vert \partial _{t}u\right\vert
^{2}+\left\vert \partial _{t}u\right\vert ^{2r}\right) dxds+X\left( 0\right)
,\text{ for all }n\in 
%TCIMACRO{\U{2115} }%
%BeginExpansion
\mathbb{N}
%EndExpansion
.%
\end{array}
\label{Xt final estimate1 log}
\end{equation}%
Using proposition \ref{proposition weighted energy}, we deduce that%
\begin{equation}
X\left( 0\right) \leq CI_{0}  \label{proof theorem 1 XT bound log}
\end{equation}%
where $I_{0}$ is defined in the statement of theorem 1.

Combining $\left( \ref{Xt final estimate1 log}\right) $ and $\left( \ref%
{proof theorem 1 XT bound log}\right) ,$ we obtain%
\begin{equation}
\begin{array}{l}
\medskip X\left( nT\right) +\frac{1}{8}\dint_{0}^{nT}\dint_{\Omega }f\left(
q\left( x\right) +s\right) \left( \left\vert \nabla u\right\vert
^{2}+\left\vert \partial _{t}u\right\vert ^{2}\right) dxds \\ 
\medskip -\dint_{0}^{nT}\dint_{\Omega }a\left( x\right) f_{1}^{\prime
}\left( q\left( x\right) +s\right) \left\vert u\right\vert ^{2}dxds-\frac{1}{%
2}\dint_{0}^{nT}\dint_{\Omega }a\left( x\right) f_{2}^{\prime }\left(
q\left( x\right) +s\right) \left\vert u\right\vert ^{r+1}dxds \\ 
\medskip +\frac{1}{8\left( \beta +1\right) }\dint_{0}^{nT}\dint_{\Omega
}a\left( x\right) \ln ^{\beta +1}\left( b+q\left( x\right) +s\right)
\left\vert \partial _{t}u\right\vert ^{r+1}dxds \\ 
\leq k_{4}\left( \dint_{0}^{nT}\dint_{\Omega }a\left( x\right) f\left(
q\left( x\right) +s\right) \left( \left\vert \partial _{t}u\right\vert
^{2}+\left\vert \partial _{t}u\right\vert ^{2r}\right) dxds+I_{0}\right) ,%
\text{ for all }n\in 
%TCIMACRO{\U{2115} }%
%BeginExpansion
\mathbb{N}
%EndExpansion
.%
\end{array}
\label{log aux}
\end{equation}%
for some $k_{4}>0.$ The next step is to control the first term of the RHS of
\ the estimate above by the last term of the LHS. We remind that%
\begin{equation*}
p=\left\{ 
\begin{array}{ll}
2\left( r+1\right) & \text{if }d\leq 3 \\ 
\frac{2d}{d-2} & \text{if }d\geq 4.%
\end{array}%
\right.
\end{equation*}%
We have $r+1<2r<p$, using interpolation inequality and Young's inequality,
we obtain%
\begin{equation*}
\begin{array}{l}
\dint_{0}^{nT}\dint_{\Omega }a\left( x\right) f\left( q\left( x\right)
+s\right) \left\vert \partial _{t}u\right\vert ^{2r}dxds \\ 
\medskip \leq \dint_{0}^{nT}f\left( s\right) \dint_{\Omega }a\left( x\right)
\left\vert \partial _{t}u\right\vert ^{2r}dxds \\ 
\medskip \leq \dint_{0}^{nT}f\left( s\right) \left( \dint_{\Omega }a\left(
x\right) \left\vert \partial _{t}u\right\vert ^{r+1}dx\right) ^{\frac{p-2r}{%
p-r-1}}\left( \dint_{\Omega }a\left( x\right) \left\vert \partial
_{t}u\right\vert ^{p}dx\right) ^{\frac{r-1}{p-r-1}}ds \\ 
\medskip \leq \left( \left\Vert a\right\Vert _{L^{\infty }}\left\Vert
\partial _{t}u\right\Vert _{L^{\infty }\left( 
%TCIMACRO{\U{211d} }%
%BeginExpansion
\mathbb{R}
%EndExpansion
_{+},L^{p}\left( \Omega \right) \right) }^{p}\dint_{0}^{nT}\left( f\left(
s\right) \right) ^{\frac{p-r-1}{r-1}}\left( \ln \left( b+s\right) \right) ^{-%
\frac{\left( \beta +1\right) \left( p-2r\right) }{r-1}}ds\right) ^{\frac{r-1%
}{p-r-1}} \\ 
\medskip \times \left( \dint_{0}^{nT}\ln ^{\beta +1}\left( b+s\right)
\dint_{\Omega }a\left( x\right) \left\vert \partial _{t}u\right\vert
^{r+1}dxds\right) ^{\frac{p-2r}{p-r-1}} \\ 
\medskip \leq \frac{\epsilon ^{-\frac{p-2r}{r-1}}\left( r-1\right)
\left\Vert a\right\Vert _{L^{\infty }}\left\Vert \partial _{t}u\right\Vert
_{L^{\infty }\left( 
%TCIMACRO{\U{211d} }%
%BeginExpansion
\mathbb{R}
%EndExpansion
_{+},L^{p}\left( \Omega \right) \right) }^{p}}{p-r-1}\dint_{0}^{+\infty
}\left( b+s\right) ^{-\frac{p-r-1}{r-1}}\left( \ln \left( b+s\right) \right)
^{\beta -\frac{p-2r}{r-1}}ds \\ 
+\frac{\epsilon \left( p-2r\right) }{p-r-1}\dint_{0}^{nT}\dint_{\Omega
}a\left( x\right) \ln ^{\beta +1}\left( b+q\left( x\right) +s\right)
\left\vert \partial _{t}u\right\vert ^{r+1}dxds,%
\end{array}%
\end{equation*}%
for all $\epsilon >0$. Thus using $\left( \ref{hight energy inequality}%
\right) $ and Sobolev imbedding $H^{1}\hookrightarrow L^{p}$, we get%
\begin{equation}
\begin{array}{l}
\medskip \dint_{0}^{nT}\dint_{\Omega }a\left( x\right) f\left( q\left(
x\right) +s\right) \left\vert \partial _{t}u\right\vert ^{2r}dxds \\ 
\medskip \leq \epsilon ^{-\frac{p-2r}{r-1}}C\left\Vert a\right\Vert
_{L^{\infty }}\left( \left\Vert u_{0}\right\Vert _{H^{2}}^{2}+\left\Vert
u_{1}\right\Vert _{H^{1}}^{2}+\left\Vert u_{1}\right\Vert
_{H^{1}}^{2r}\right) ^{\frac{p}{2}} \\ 
+\frac{\epsilon \left( p-2r\right) }{p-r-1}\dint_{0}^{nT}\dint_{\Omega
}a\left( x\right) \left( \ln \left( b+q\left( x\right) +s\right) \right)
^{\beta +1}\left\vert \partial _{t}u\right\vert ^{r+1}dxds,%
\end{array}
\label{log 1}
\end{equation}%
for all $\epsilon >0$. To estimate the last term, first we use Holder's
inequality%
\begin{equation*}
\begin{array}{l}
\medskip \dint_{0}^{nT}\dint_{\Omega }a\left( x\right) f\left( q\left(
x\right) +s\right) \left\vert \partial _{t}u\right\vert ^{2}dxds \\ 
\medskip \leq \left( \left\Vert a\right\Vert _{L^{\infty
}}\dint_{0}^{nT}\dint_{\Omega }\left( f\left( q\left( x\right) +s\right)
\right) ^{\frac{r+1}{r-1}}\ln ^{-\frac{2\left( \beta +1\right) }{r-1}}\left(
b+q\left( x\right) +s\right) dxds\right) ^{\frac{r-1}{r+1}} \\ 
\medskip \times \left( \dint_{0}^{nT}\dint_{\Omega }a\left( x\right) \ln
^{\beta +1}\left( b+q\left( x\right) +s\right) \left\vert \partial
_{t}u\right\vert ^{r+1}dxds\right) ^{\frac{2}{r+1}} \\ 
\medskip \leq \left( \left\Vert a\right\Vert _{L^{\infty
}}\dint_{0}^{+\infty }\dint_{\Omega }\left( b+q\left( x\right) +s\right) ^{-%
\frac{r+1}{r-1}}\ln ^{\beta -\frac{2}{r-1}}\left( b+q\left( x\right)
+s\right) dxds\right) ^{\frac{r-1}{r+1}} \\ 
\times \left( \dint_{0}^{nT}\dint_{\Omega }a\left( x\right) \ln ^{\beta
+1}\left( b+q\left( x\right) +s\right) \left\vert \partial _{t}u\right\vert
^{r+1}dxds\right) ^{\frac{2}{r+1}}.%
\end{array}%
\end{equation*}%
By Young's inequality, we end up with%
\begin{equation*}
\begin{array}{l}
\medskip \dint_{0}^{nT}\dint_{\Omega }a\left( x\right) f\left( q\left(
x\right) +s\right) \left\vert \partial _{t}u\right\vert ^{2}dxds \\ 
\medskip \leq \frac{\left( r-1\right) \epsilon ^{-\frac{2}{r-1}}\left\Vert
a\right\Vert _{L^{\infty }}}{r+1}\dint_{0}^{+\infty }\dint_{\Omega }\left(
b+q\left( x\right) +s\right) ^{-\frac{r+1}{r-1}}\ln ^{\beta -\frac{2}{r-1}%
}\left( b+q\left( x\right) +s\right) dxds \\ 
\medskip +\frac{2\epsilon }{r+1}\dint_{0}^{nT}\dint_{\Omega }a\left(
x\right) \ln ^{\beta +1}\left( b+q\left( x\right) +s\right) \left\vert
\partial _{t}u\right\vert ^{r+1}dxds \\ 
\medskip \leq C\left\Vert a\right\Vert _{L^{\infty }}\frac{\left( r-1\right)
\epsilon ^{-\frac{2}{r-1}}}{r+1}\dint_{0}^{+\infty }\dint_{0}^{+\infty }\ln
^{\beta -\frac{2}{r-1}}\left( b+y+s\right) \left( b+y+s\right) ^{-\frac{r+1}{%
r-1}+d-1}dyds \\ 
+\frac{2\epsilon }{r+1}\dint_{0}^{nT}\dint_{\Omega }a\left( x\right) \ln
^{\beta +1}\left( b+q\left( x\right) +s\right) \left\vert \partial
_{t}u\right\vert ^{r+1}dxds,%
\end{array}%
\end{equation*}%
for all $\epsilon >0$. In view of the fact that 
\begin{equation}
\begin{array}{ll}
-\frac{r+1}{r-1}+d<-1 & \text{if }1<r<1+\frac{2}{d} \\ 
\beta -\frac{2}{r-1}<-1\text{ and }-\frac{r+1}{r-1}+d=-1 & \text{if }r=1+%
\frac{2}{d},%
\end{array}
\label{proof log theorem 1}
\end{equation}%
we see that%
\begin{equation}
\begin{array}{l}
\medskip \dint_{0}^{nT}\dint_{\Omega }a\left( x\right) f\left( q\left(
x\right) +s\right) \left\vert \partial _{t}u\right\vert ^{2}dxds \\ 
\leq C\epsilon ^{-\frac{2}{r-1}}\left\Vert a\right\Vert _{L^{\infty }}+\frac{%
2\epsilon }{r+1}\dint_{0}^{nT}\dint_{\Omega }a\left( x\right) \ln ^{\beta
+1}\left( b+q\left( x\right) +s\right) \left\vert \partial _{t}u\right\vert
^{r+1}dxds,%
\end{array}
\label{log2}
\end{equation}%
for all $\epsilon >0$. We choose $\epsilon $ such that%
\begin{equation*}
\begin{array}{l}
\frac{1}{8\left( \beta +1\right) }-k_{4}\epsilon \left( \frac{p-2r}{p-r-1}+%
\frac{2}{r+1}\right) \geq \frac{1}{16\left( \beta +1\right) }.%
\end{array}%
\end{equation*}%
We conclude that there exists a positive constant $C_{1}$ such that%
\begin{equation*}
\begin{array}{l}
\medskip X\left( nT\right) +\frac{1}{8}\dint_{0}^{nT}\dint_{\Omega }f\left(
q\left( x\right) +s\right) \left( \left\vert \nabla u\right\vert
^{2}+\left\vert \partial _{t}u\right\vert ^{2}\right) dxds \\ 
\medskip -\dint_{0}^{nT}\dint_{\Omega }a\left( x\right) f_{1}^{\prime
}\left( q\left( x\right) +s\right) \left\vert u\right\vert ^{2}dxds-\frac{1}{%
2}\dint_{0}^{nT}\dint_{\Omega }a\left( x\right) f_{2}^{\prime }\left(
q\left( x\right) +s\right) \left\vert u\right\vert ^{r+1}dxds \\ 
+\frac{1}{16\left( \beta +1\right) }\dint_{0}^{nT}\dint_{\Omega }a\left(
x\right) \ln ^{\beta +1}\left( b+q\left( x\right) +s\right) \left\vert
\partial _{t}u\right\vert ^{r+1}dxds\leq C_{1}I_{0},\text{ for all }n\in 
%TCIMACRO{\U{2115} }%
%BeginExpansion
\mathbb{N}
%EndExpansion
.%
\end{array}%
\end{equation*}%
Therefore we obtain%
\begin{equation*}
\begin{array}{l}
\medskip \frac{1}{8}\dint_{0}^{\infty }\dint_{\Omega }f\left( q\left(
x\right) +s\right) \left( \left\vert \nabla u\right\vert ^{2}+\left\vert
\partial _{t}u\right\vert ^{2}\right) dxds \\ 
\medskip -\dint_{0}^{+\infty }\dint_{\Omega }a\left( x\right) f_{1}^{\prime
}\left( q\left( x\right) +s\right) \left\vert u\right\vert ^{2}dxds-\frac{1}{%
2}\dint_{0}^{+\infty }\dint_{\Omega }a\left( x\right) f_{2}^{\prime }\left(
q\left( x\right) +s\right) \left\vert u\right\vert ^{r+1}dxds \\ 
+\frac{1}{16\left( \beta +1\right) }\dint_{0}^{+\infty }\dint_{\Omega
}a\left( x\right) \ln ^{\beta +1}\left( b+q\left( x\right) +s\right)
\left\vert \partial _{t}u\right\vert ^{r+1}dxds\leq C_{1}I_{0}.%
\end{array}%
\end{equation*}%
Now using the weighted energy estimate $\left( \ref{weighted energy}\right) $%
, we infer that%
\begin{equation*}
\begin{array}{l}
E_{\varphi }\left( u\right) \left( t\right) =\dint_{\Omega }\varphi \left(
q\left( x\right) +s\right) \left( \left\vert \nabla u\left( s\right)
\right\vert ^{2}+\left\vert \partial _{t}u\left( s\right) \right\vert
^{2}\right) \\ 
\leq E_{\varphi }\left( u\right) \left( 0\right) +\left( \beta +1\right)
\dint_{0}^{\infty }\dint_{\Omega }f\left( q\left( x\right) +s\right) \left(
\left\vert \nabla u\left( s\right) \right\vert ^{2}+\left\vert \partial
_{t}u\left( s\right) \right\vert ^{2}\right) dxds \\ 
\leq C_{0}I_{0},%
\end{array}%
\end{equation*}%
\ for some positive constant $C_{0}$. The sought estimate follows from the
estimate above and the fact that%
\begin{equation*}
\ln ^{\beta +1}\left( 2+t\right) \medskip E_{u}\left( t\right) \leq \medskip
E_{\varphi }\left( u\right) \left( t\right) .
\end{equation*}

\section{Proof of Theorem 2}

\subsection{Preliminary results}

Throughout this section we use the following notations: Let $\beta $ be a
real number such that $0<1+\beta <\tau ,$ where 
\begin{equation*}
\begin{array}{c}
\tau =\frac{r\delta _{0}^{\frac{r^{2}+1}{r}}\left( \lambda +1\right)
^{r-1}\left( r+1\right) ^{r}}{\delta _{0}^{\frac{r+1}{r}}+\delta
_{0}^{r}\left( \lambda +1\right) ^{r-1}\left( r+1\right) ^{r}\left( r\delta
_{0}\left( \lambda +1\right) \left( r+1\right) +\delta _{0}^{\frac{1}{r}%
}\right) \allowbreak },%
\end{array}%
\end{equation*}%
$\lambda $ any positive constant$~$and 
\begin{equation*}
\delta _{0}=\left( \lambda +1\right) ^{\frac{r^{2}}{r^{2}-1}}\left(
r+1\right) ^{-\frac{r}{r-1}}.
\end{equation*}

We take $\varphi \left( s\right) =\left( 1+\alpha s\right) ^{\beta +1}$
where 
\begin{equation*}
\alpha =\frac{rk^{r}\left( r+1\right) +\delta _{0}^{\frac{1}{r}}}{%
k^{r}\delta _{0}^{\frac{1}{r}}\left( r+1\right) \left( r-\tau \right) },
\end{equation*}%
and 
\begin{equation*}
k=\left( 1+\lambda \right) \left( r+1\right) \delta _{0}.
\end{equation*}

Finally, let $\psi \in C_{c}^{\infty }\left( 
%TCIMACRO{\U{211d} }%
%BeginExpansion
\mathbb{R}
%EndExpansion
^{d}\right) $ such that $0\leq \psi \leq 1$ and 
\begin{equation*}
\psi \left( x\right) =\left\{ 
\begin{array}{ll}
1 & \text{for }\left\vert x\right\vert \leq L \\ 
0 & \text{for }\left\vert x\right\vert \geq 2L%
\end{array}%
\right. .
\end{equation*}

\begin{proposition}
We assume that Hyp A holds and $(\omega $,$T)$ geometrically controls $%
\Omega $. Let $\delta >0$, $R_{0}>L$ and $-1<\beta \leq 0$. There exists $%
C_{T,\delta }=C\left( T,\delta ,R_{0},\alpha ,\beta \right) >0$, such that
the following inequality%
\begin{equation}
\begin{array}{l}
\medskip \dint_{t}^{t+T}\dint_{\Omega \cap B_{R_{0}}}\left( 1+\alpha \left(
q\left( x\right) +s\right) \right) ^{\beta }\left( \left\vert u\right\vert
^{2}+\left\vert \nabla u\right\vert ^{2}+\left\vert \partial
_{t}u\right\vert ^{2}\right) dxds \\ 
\medskip \leq C_{T,\delta }\dint_{t}^{t+T}\dint_{\Omega }a\left( x\right)
\left( 1+\alpha \left( q\left( x\right) +s\right) \right) ^{\beta }\left(
\left\vert \partial _{t}u\right\vert ^{2}+\left\vert \partial
_{t}u\right\vert ^{2r}\right) dxds \\ 
\medskip +C_{T,\delta }\dint_{t}^{t+T}\dint_{\Omega }a\left( x\right) \left(
1+\alpha \left( q\left( x\right) +s\right) \right) ^{\beta -2}\left\vert
u\right\vert ^{2}dxds \\ 
+\delta \dint_{t}^{t+T}\dint_{\Omega }\left( 1+\alpha \left( q\left(
x\right) +s\right) \right) ^{\beta }\left( \left\vert \nabla u\right\vert
^{2}+\left\vert \partial _{t}u\right\vert ^{2}\right) dxds,%
\end{array}
\label{observability gradient}
\end{equation}%
holds for every $t\geq 0$ and for all $u$ solution of $\left( \ref{system}%
\right) $ with initial data $\left( u_{0},u_{1}\right) $ in $H_{0}^{1}\left(
\Omega \right) \cap H^{2}\left( \Omega \right) \times H_{0}^{1}\left( \Omega
\right) .$
\end{proposition}

\begin{proof}
We set%
\begin{equation*}
f\left( s\right) =\left( 1+\alpha s\right) ^{\beta }
\end{equation*}%
In view of $f\in L^{\infty }\left( 
%TCIMACRO{\U{211d} }%
%BeginExpansion
\mathbb{R}
%EndExpansion
_{+}\right) ,$ we have $E_{f}\left( u\right) \left( 0\right) <\infty .$ On
the other hand, it is clear that $f^{\prime }\in L^{\infty }\left( 
%TCIMACRO{\U{211d} }%
%BeginExpansion
\mathbb{R}
%EndExpansion
_{+}\right) $ and there exists a positive constant $K,$ such that 
\begin{equation*}
\begin{array}{l}
\underset{%
%TCIMACRO{\U{211d} }%
%BeginExpansion
\mathbb{R}
%EndExpansion
_{+}}{\sup }\left\vert \frac{f^{\prime \prime }\left( t\right) }{f^{\prime
}\left( t\right) }\right\vert \leq K.%
\end{array}%
\end{equation*}%
In addition the function $t\longmapsto \left\vert \frac{f^{\prime }\left(
t\right) }{f\left( t\right) }\right\vert $ is decreasing and $\underset{%
t\rightarrow +\infty }{\lim }\left\vert \frac{f^{\prime }\left( t\right) }{%
f\left( t\right) }\right\vert =0$. Moreover there exists $C>0,$ such that%
\begin{equation*}
\begin{array}{l}
\frac{\left( f^{\prime }\left( t\right) \right) ^{2}}{f\left( t\right) }\leq
C\text{ }\left( -f^{\prime }\left( t\right) \right) ,\text{ for all }t\geq 0.%
\end{array}%
\end{equation*}%
Since%
\begin{equation*}
\partial _{t}u\in L^{\infty }\left( 
%TCIMACRO{\U{211d} }%
%BeginExpansion
\mathbb{R}
%EndExpansion
_{+},H_{0}^{1}\left( \Omega \right) \right) ,
\end{equation*}%
then from Sobolev imbedding, we deduce that%
\begin{equation*}
\sqrt{a\left( x\right) \left( 1+\alpha \left( q\left( x\right) +s\right)
\right) ^{\beta }}\left\vert \partial _{t}u\right\vert ^{r}\in
L_{loc}^{2}\left( 
%TCIMACRO{\U{211d} }%
%BeginExpansion
\mathbb{R}
%EndExpansion
_{+},L^{2}\left( \Omega \right) \right) .
\end{equation*}%
By taking into account of the results above, we can use proposition \ref%
{proposition onservability global} and we obtain $\left( \ref{observability
gradient}\right) .$This finishes the proof of the proposition.
\end{proof}

In order to prove theorem 2 we need the following result$.$

\begin{lemma}
\label{lemma xt poly}Let $u$ be a solution of $\left( \ref{system}\right) $
with initial data in $H_{0}^{1}\left( \Omega \right) \cap H^{2}\left( \Omega
\right) \times H_{0}^{1}\left( \Omega \right) $ such that%
\begin{equation*}
\begin{array}{c}
E_{\varphi }\left( u\right) \left( 0\right) =\left\Vert \left( 1+\alpha
q\right) ^{\frac{1+\beta }{2}}\nabla u_{0}\right\Vert
_{L^{2}}^{2}+\left\Vert \left( 1+\alpha q\right) ^{\frac{1+\beta }{2}%
}u_{1}\right\Vert _{L^{2}}^{2}<+\infty .%
\end{array}%
\end{equation*}%
We set $\chi =1-\psi ~$and%
\begin{equation}
\begin{array}{l}
\medskip X\left( t\right) =\dint_{\Omega }\left( 1+\alpha \left( q\left(
x\right) +t\right) \right) ^{\beta }\chi ^{2}\left( x\right) u\left(
t\right) \partial _{t}u\left( t\right) dx+\frac{k_{1}}{2}\dint_{\Omega
}\left( 1+\alpha \left( q\left( x\right) +t\right) \right) ^{\beta
-1}a\left( x\right) \left\vert u\left( t\right) \right\vert ^{2}dx \\ 
+\dint_{\Omega }a\left( x\right) \left( 1+\alpha \left( q\left( x\right)
+t\right) \right) ^{\beta -r+1}\left\vert u\left( t\right) \right\vert
^{r+1}dx+\frac{k}{2}\dint_{\Omega }\left( 1+\alpha \left( q\left( x\right)
+t\right) \right) ^{\beta +1}\left( \left\vert \nabla u\right\vert
^{2}+\left\vert \partial _{t}u\right\vert ^{2}\right) dx,%
\end{array}
\label{Xt definition polynomial}
\end{equation}%
where $k_{1}>0.$ Then%
\begin{equation}
\begin{array}{l}
\medskip X\left( t+T\right) -X\left( t\right) +\frac{1-k\alpha \left(
1+\beta \right) }{2}\dint_{t}^{t+T}\dint_{\Omega }\left( 1+\alpha \left(
q\left( x\right) +s\right) \right) ^{\beta }\left( \left\vert \nabla
u\right\vert ^{2}+\left\vert \partial _{t}u\right\vert ^{2}\right) dxds \\ 
\medskip +\left( \frac{k_{1}\alpha \left( 1-\beta \right) }{4}-\frac{\beta
^{2}\alpha ^{2}}{\epsilon _{0}\epsilon }\right) \dint_{t}^{t+T}\dint_{\Omega
}a\left( x\right) \left( 1+\alpha \left( q\left( x\right) +s\right) \right)
^{\beta -2}\left\vert u\left( t\right) \right\vert ^{2}dxds \\ 
\medskip +\lambda \delta _{0}\dint_{t}^{t+T}\dint_{\Omega }a\left( x\right)
\left( 1+\alpha \left( q\left( x\right) +s\right) \right) ^{\beta
+1}\left\vert \partial _{t}u\right\vert ^{r+1}dxds \\ 
\medskip \leq \left( 3+\left\Vert \nabla \chi ^{2}\right\Vert _{\infty
}\right) \dint_{t}^{t+T}\dint_{\Omega \cap B_{2L}}\left( 1+\alpha \left(
q\left( x\right) +s\right) \right) ^{\beta }\left( \left\vert u\right\vert
^{2}+\left\vert \nabla u\right\vert ^{2}+\left\vert \partial
_{t}u\right\vert ^{2}\right) dxds \\ 
+\left( \frac{2}{\epsilon _{0}}+\frac{8k_{1}}{\alpha \left( 1-\beta \right) }%
+\frac{8\beta ^{2}\alpha }{\epsilon _{0}^{2}k_{1}\left( 1-\beta \right) }%
\right) \dint_{t}^{t+T}\dint_{\Omega }a\left( x\right) \left( 1+\alpha
\left( q\left( x\right) +s\right) \right) ^{\beta }\left\vert \partial
_{t}u\right\vert ^{2}dxds,%
\end{array}
\label{X t estimate}
\end{equation}%
for all $t\geq 0,$ where $\lambda $ any positive constant.
\end{lemma}

\begin{proof}
We have%
\begin{equation*}
\begin{array}{l}
\dint_{\Omega }\varphi \left( q\left( x\right) \right) \left( \left\vert
\nabla u_{0}\right\vert ^{2}+\left\vert u_{1}\right\vert ^{2}\right)
dx<\infty .%
\end{array}%
\end{equation*}%
Then from $\left( \ref{weighted energy}\right) ,$ we infer 
\begin{equation*}
\begin{array}{l}
\medskip E_{\varphi }\left( u\right) \left( t+T\right)
+\dint_{t}^{t+T}\dint_{\Omega }a\left( x\right) \left( 1+\alpha \left(
q\left( x\right) +s\right) \right) ^{\beta +1}\left\vert \partial
_{t}u\right\vert ^{r+1}dxds \\ 
\leq E_{\varphi }\left( u\right) \left( t\right) +\left( \beta +1\right)
\alpha \dint_{t}^{t+T}\dint_{\Omega }\left( 1+\alpha \left( q\left( x\right)
+s\right) \right) ^{\beta }\left( \left\vert \nabla u\right\vert
^{2}+\left\vert \partial _{t}u\right\vert ^{2}\right) dxds.%
\end{array}%
\end{equation*}%
We set%
\begin{equation*}
\begin{array}{l}
\medskip X_{0}\left( t\right) =\dint_{\Omega }\left( 1+\alpha \left( q\left(
x\right) +t\right) \right) ^{\beta }\chi ^{2}\left( x\right) u\left(
t\right) \partial _{t}u\left( t\right) dx+\frac{k_{1}}{2}\dint_{\Omega
}\left( 1+\alpha \left( q\left( x\right) +t\right) \right) ^{\beta
-1}a\left( x\right) \left\vert u\left( t\right) \right\vert ^{2}dx \\ 
\medskip +\dint_{\Omega }a\left( x\right) \left( 1+\alpha \left( q\left(
x\right) +t\right) \right) ^{\beta -r+1}\left\vert u\left( t\right)
\right\vert ^{r+1}dx.%
\end{array}%
\end{equation*}%
Using the fact that $u$ is a solution of $\left( \ref{system}\right) ,$ we
deduce that%
\begin{equation}
\begin{array}{l}
\medskip \frac{d}{dt}X_{0}\left( t\right) =\dint_{\Omega }\left( \left\vert
\partial _{t}u\left( t\right) \right\vert ^{2}-\left\vert \nabla u\left(
t\right) \right\vert ^{2}-a\left( x\right) \left\vert \partial _{t}u\left(
t\right) \right\vert ^{r-1}u\left( t\right) \partial _{t}u\left( t\right)
\right) \chi ^{2}\left( x\right) \left( 1+\alpha \left( q\left( x\right)
+t\right) \right) ^{\beta }dx \\ 
\medskip -\dint_{\Omega }\left( 1+\alpha \left( q\left( x\right) +t\right)
\right) ^{\beta }u\left( t\right) \nabla \chi ^{2}\left( x\right) \nabla
u\left( t\right) +\beta \alpha \left( 1+\alpha \left( q\left( x\right)
+t\right) \right) ^{\beta -1}\chi ^{2}\left( x\right) u\left( t\right) \frac{%
x\cdot \nabla u\left( t\right) }{q\left( x\right) }dx \\ 
\medskip +\beta \alpha \dint_{\Omega }\left( 1+\alpha \left( q\left(
x\right) +t\right) \right) ^{\beta -1}\chi ^{2}\left( x\right) u\left(
t\right) \partial _{t}u\left( t\right) dx \\ 
\medskip +k_{1}\left( \dint_{\Omega }a\left( x\right) \left( 1+\alpha \left(
q\left( x\right) +t\right) \right) ^{\beta -1}u\left( t\right) \partial
_{t}u\left( t\right) dx+\frac{\beta -1}{2}\alpha \dint_{\Omega }a\left(
x\right) \left( 1+\alpha \left( q\left( x\right) +t\right) \right) ^{\beta
-2}\left\vert u\right\vert ^{2}dx\right) \\ 
+\left( \beta +1-r\right) \alpha \dint_{\Omega }a\left( x\right) \left(
1+\alpha \left( q\left( x\right) +t\right) \right) ^{\beta -r}\left\vert
u\right\vert ^{r+1}dx \\ 
+\left( r+1\right) \dint_{\Omega }a\left( x\right) \left( 1+\alpha \left(
q\left( x\right) +t\right) \right) ^{\beta -r+1}\left\vert u\right\vert
^{r-1}u\partial _{t}udx.%
\end{array}
\label{X0poly}
\end{equation}%
We note that $\left\Vert \chi \right\Vert _{\infty }\leq 1.$ Using Young's
inequality and the fact that the support of $\chi $ is contained in $\left\{
\left\vert x\right\vert \geq L\right\} $ and 
\begin{equation*}
a\left( x\right) >\epsilon _{0}>0\text{ for }\left\vert x\right\vert \geq L,
\end{equation*}%
we infer that%
\begin{equation*}
\begin{array}{l}
\medskip \left\vert \alpha \beta \dint_{\Omega }\left( 1+\alpha q\left(
x\right) +\alpha t\right) ^{\beta -1}\chi ^{2}\left( x\right) u\left(
t\right) \partial _{t}u\left( t\right) dx\right\vert \\ 
\leq \frac{k_{1}\alpha \left( 1-\beta \right) }{8}\dint_{\Omega }a\left(
x\right) \left( 1+\alpha \left( q\left( x\right) +t\right) \right) ^{\beta
-2}\left\vert u\left( t\right) \right\vert ^{2}dx+\frac{8\beta ^{2}\alpha }{%
\epsilon _{0}^{2}k_{1}\left( 1-\beta \right) }\dint_{\Omega }a\left(
x\right) \left( 1+\alpha \left( q\left( x\right) +t\right) \right) ^{\beta
}\left\vert \partial _{t}u\left( t\right) \right\vert ^{2}dx%
\end{array}%
\end{equation*}%
and%
\begin{equation*}
\begin{array}{l}
\medskip k_{1}\left\vert \dint_{\Omega }a\left( x\right) \left( 1+\alpha
\left( q\left( x\right) +t\right) \right) ^{\beta -1}u\left( t\right)
\partial _{t}u\left( t\right) dx\right\vert \\ 
\leq \frac{k_{1}\alpha \left( 1-\beta \right) }{8}\dint_{\Omega }a\left(
x\right) \left( 1+\alpha \left( q\left( x\right) +t\right) \right) ^{\beta
-2}\left\vert u\left( t\right) \right\vert ^{2}dx+\frac{8k_{1}}{\alpha
\left( 1-\beta \right) }\dint_{\Omega }a\left( x\right) \left( 1+\alpha
\left( q\left( x\right) +t\right) \right) ^{\beta }\left\vert \partial
_{t}u\left( t\right) \right\vert ^{2}dx.%
\end{array}%
\end{equation*}

Using the same arguments, we also deduce that%
\begin{equation*}
\begin{array}{l}
\medskip \left\vert \dint_{\Omega }\beta \alpha \left( 1+\alpha \left(
q\left( x\right) +t\right) \right) ^{\beta -1}\chi ^{2}\left( x\right)
u\left( t\right) \frac{x\cdot \nabla u\left( t\right) }{q\left( x\right) }%
dx\right\vert \\ 
\leq \frac{\beta ^{2}\alpha ^{2}}{\epsilon _{0}\epsilon }\dint_{\Omega
}a\left( x\right) \left( 1+\alpha \left( q\left( x\right) +t\right) \right)
^{\beta -2}\left\vert u\left( t\right) \right\vert ^{2}dx+\epsilon
\dint_{\Omega }\left( 1+\alpha \left( q\left( x\right) +t\right) \right)
^{\beta }\left\vert \nabla u\left( t\right) \right\vert ^{2}dx,%
\end{array}%
\end{equation*}%
for all $\epsilon >0.~$Using the fact that the support of $\psi $ is
contained in $\left\{ \left\vert x\right\vert \leq 2L\right\} $ and 
\begin{equation*}
a\left( x\right) >\epsilon _{0}>0\text{ for }\left\vert x\right\vert \geq L,
\end{equation*}%
we get%
\begin{equation*}
\begin{array}{l}
\dint_{\Omega }\chi ^{2}\left( x\right) \left( \left\vert \partial
_{t}u\left( t\right) \right\vert ^{2}-\left\vert \nabla u\left( t\right)
\right\vert ^{2}\right) \left( 1+\alpha \left( q\left( x\right) +t\right)
\right) ^{\beta }dx \\ 
=\dint_{\Omega }\left( 1-2\psi \left( x\right) +\psi ^{2}\left( x\right)
\right) \left( 1+\alpha \left( q\left( x\right) +t\right) \right) ^{\beta
}\left( \left\vert \partial _{t}u\left( t\right) \right\vert ^{2}-\left\vert
\nabla u\left( t\right) \right\vert ^{2}\right) dx \\ 
\leq \frac{2}{\epsilon _{0}}\dint_{\Omega }a\left( x\right) \left( 1+\alpha
\left( q\left( x\right) +t\right) \right) ^{\beta }\left\vert \partial
_{t}u\left( t\right) \right\vert ^{2}dx \\ 
-\dint_{\Omega }\left( 1+\alpha \left( q\left( x\right) +t\right) \right)
^{\beta }\left( \left\vert \partial _{t}u\left( t\right) \right\vert
^{2}+\left\vert \nabla u\left( t\right) \right\vert ^{2}\right) dx \\ 
+3\dint_{\Omega \cap B_{2L}}\left( 1+\alpha \left( q\left( x\right)
+t\right) \right) ^{\beta }\left( \left\vert \partial _{t}u\left( t\right)
\right\vert ^{2}+\left\vert \nabla u\left( t\right) \right\vert ^{2}\right)
dx.%
\end{array}%
\end{equation*}%
We note that the support of $\nabla \chi ^{2}$ is contained in $\left\{
\left\vert x\right\vert \leq 2L\right\} $, using Young's inequality, we
deduce that%
\begin{equation*}
\begin{array}{l}
\left\vert -\dint_{\Omega }\left( 1+\alpha \left( q\left( x\right) +t\right)
\right) ^{\beta }u\left( t\right) \nabla \chi ^{2}\left( x\right) \nabla
u\left( t\right) dx\right\vert \\ 
\leq \frac{1}{2}\left\Vert \nabla \chi ^{2}\right\Vert _{\infty
}\dint_{\Omega \cap B_{2L}}\left( 1+\alpha \left( q\left( x\right) +t\right)
\right) ^{\beta }\left( \left\vert u\left( t\right) \right\vert
^{2}+\left\vert \nabla u\left( t\right) \right\vert ^{2}\right) dx.%
\end{array}%
\end{equation*}%
Young's inequality combined with the fact that $\left\Vert \chi \right\Vert
_{\infty }\leq 1,$ gives%
\begin{equation*}
\begin{array}{l}
\medskip \left\vert \dint_{\Omega }a\left( x\right) \left( 1+\alpha \left(
q\left( x\right) +t\right) \right) ^{\beta }\chi ^{2}\left( x\right)
\left\vert \partial _{t}u\left( t\right) \right\vert ^{r-1}u\partial
_{t}u\left( t\right) dx\right\vert \\ 
\leq \frac{rk}{r+1}\dint_{\Omega }a\left( x\right) \left( 1+\alpha \left(
q\left( x\right) +t\right) \right) ^{\beta +1}\left\vert \partial
_{t}u\left( t\right) \right\vert ^{r+1}dx+\frac{k^{-r}}{r+1}\int_{\Omega
}a\left( x\right) \left( 1+\alpha \left( q\left( x\right) +t\right) \right)
^{\beta -r}\left\vert u\left( t\right) \right\vert ^{r+1}dx%
\end{array}%
\end{equation*}%
and%
\begin{equation*}
\begin{array}{l}
\left( r+1\right) \left\vert \dint_{\Omega }a\left( x\right) \left( 1+\alpha
\left( q\left( x\right) +t\right) \right) ^{\beta -r+1}\left\vert u\left(
t\right) \right\vert ^{r-1}u\partial _{t}u\left( t\right) dx\right\vert \\ 
\leq \delta _{0}\dint_{\Omega }a\left( x\right) \left( 1+\alpha \left(
q\left( x\right) +t\right) \right) ^{\beta +1}\left\vert \partial
_{t}u\left( t\right) \right\vert ^{r+1}dx+r\delta _{0}^{-\frac{1}{r}%
}\dint_{\Omega }a\left( x\right) \left( 1+\alpha \left( q\left( x\right)
+t\right) \right) ^{\beta -r}\left\vert u\left( t\right) \right\vert
^{r+1}dx.%
\end{array}%
\end{equation*}%
By taking into account of the estimates above, making some arrangement in $%
\left( \ref{X0poly}\right) $ and integrating the result between $t$ and $%
t+T, $ we obtain%
\begin{equation*}
\begin{array}{l}
\medskip X\left( t+T\right) -X\left( t\right) +\left( 1-\epsilon -\left(
1+\beta \right) k\alpha \right) \dint_{t}^{t+T}\dint_{\Omega }\left(
1+\alpha \left( q\left( x\right) +s\right) \right) ^{\beta }\left(
\left\vert \nabla u\right\vert ^{2}+\left\vert \partial _{t}u\right\vert
^{2}\right) dxds \\ 
\medskip +\left( \frac{k_{1}\alpha \left( 1-\beta \right) }{4}-\frac{\beta
^{2}\alpha ^{2}}{\epsilon _{0}\epsilon }\right) \dint_{t}^{t+T}\dint_{\Omega
}a\left( x\right) \left( 1+\alpha \left( q\left( x\right) +s\right) \right)
^{\beta -2}\left\vert u\right\vert ^{2}dxds \\ 
\medskip +\left( \left( \alpha -\delta _{0}^{-\frac{1}{r}}\right) r-\left(
\beta +1\right) \alpha -\frac{k^{-r}}{r+1}\right)
\dint_{t}^{t+T}\dint_{\Omega }a\left( x\right) \left( 1+\alpha \left(
q\left( x\right) +s\right) \right) ^{\beta -r}\left\vert u\right\vert
^{r+1}dxds \\ 
\medskip +\left( k-\frac{kr}{r+1}-\delta _{0}\right)
\dint_{t}^{t+T}\dint_{\Omega }a\left( x\right) \left( 1+\alpha \left(
q\left( x\right) +s\right) \right) ^{\beta +1}\left\vert \partial
_{t}u\right\vert ^{r+1}dxds \\ 
\medskip \leq \left( 3+\left\Vert \nabla \chi ^{2}\right\Vert _{\infty
}\right) \dint_{t}^{t+T}\dint_{\Omega \cap B_{2L}}\left( 1+\alpha \left(
q\left( x\right) +s\right) \right) ^{\beta }\left( \left\vert u\right\vert
^{2}+\left\vert \nabla u\right\vert ^{2}+\left\vert \partial
_{t}u\right\vert ^{2}\right) dxds \\ 
+\left( \frac{2}{\epsilon _{0}}+\frac{8k_{1}}{\alpha \left( 1-\beta \right) }%
+\frac{8\beta ^{2}\alpha }{\epsilon _{0}^{2}k_{1}\left( 1-\beta \right) }%
\right) \dint_{t}^{t+T}\dint_{\Omega }a\left( x\right) \left( 1+\alpha
\left( q\left( x\right) +s\right) \right) ^{\beta }\left\vert \partial
_{t}u\right\vert ^{2}dxds,%
\end{array}%
\end{equation*}%
for all $\epsilon >0.~$

We have 
\begin{equation*}
1-\left( \beta +1\right) k\alpha >1-\tau k\alpha =0.~
\end{equation*}%
So we can choose $\epsilon =\frac{1-\gamma k\alpha }{2}.$ It is easy to see
that 
\begin{equation*}
\begin{array}{c}
\left( \alpha -\delta _{0}^{-\frac{1}{r}}\right) r-\left( \beta +1\right)
\alpha -\frac{k^{-r}}{r+1} \\ 
>\left( \alpha -\delta _{0}^{-\frac{1}{r}}\right) r-\tau \alpha -\frac{k^{-r}%
}{r+1}=0%
\end{array}%
\end{equation*}%
and 
\begin{equation*}
\begin{array}{l}
\medskip k-\frac{kr}{r+1}-\delta _{0}=\lambda \delta _{0}.%
\end{array}%
\end{equation*}%
Collecting the estimates above, we get $\left( \ref{X t estimate}\right) .$
\end{proof}

\subsection{Proof of Theorem 2}

We assume that Hyp A holds and $\omega $ satisfies the GCC. Let $u$ be a
solution of $\left( \ref{system}\right) $ with initial data in $%
H_{0}^{1}\left( \Omega \right) \cap H^{2}\left( \Omega \right) \times
H_{0}^{1}\left( \Omega \right) $ such that 
\begin{equation*}
\begin{array}{c}
\left\Vert \left( 1+\alpha q\right) ^{\frac{\gamma }{2}}\nabla
u_{0}\right\Vert _{L^{2}}^{2}+\left\Vert \left( 1+\alpha q\right) ^{\frac{%
\gamma }{2}}u_{1}\right\Vert _{L^{2}}^{2}<+\infty .%
\end{array}%
\end{equation*}%
We set $\gamma =1+\beta .$ Using $\left( \ref{X t estimate}\right) $\ and $%
\left( \ref{observability gradient}\right) $ and arguing as in the proof of
theorem 1 we obtain%
\begin{equation}
\begin{array}{l}
\medskip X\left( t+T\right) -X\left( t\right) +\left( \frac{1-k\alpha \left(
1+\beta \right) }{2}-\left( 3+\left\Vert \nabla \chi ^{2}\right\Vert
_{\infty }\right) \delta \right) \dint_{t}^{t+T}\dint_{\Omega }\left(
1+\alpha \left( q\left( x\right) +s\right) \right) ^{\beta }\left(
\left\vert \nabla u\right\vert ^{2}+\left\vert \partial _{t}u\right\vert
^{2}\right) dxds \\ 
\medskip +\left( \frac{k_{1}\alpha \left( 1-\beta \right) }{4}-\frac{\beta
^{2}\alpha ^{2}}{\epsilon _{0}\epsilon }-\left( 3+\left\Vert \nabla \chi
^{2}\right\Vert _{\infty }\right) C_{T,\delta }\right)
\dint_{t}^{t+T}\dint_{\Omega }a\left( x\right) \left( 1+\alpha \left(
q\left( x\right) +s\right) \right) ^{\beta -2}\left\vert u\right\vert
^{2}dxds \\ 
\medskip +\lambda \delta _{0}\dint_{t}^{t+T}\dint_{\Omega }a\left( x\right)
\left( 1+\alpha \left( q\left( x\right) +s\right) \right) ^{\beta
+1}\left\vert \partial _{t}u\right\vert ^{r+1}dxds \\ 
\leq k_{2}\dint_{t}^{t+T}\dint_{\Omega }a\left( x\right) \left( 1+\alpha
\left( q\left( x\right) +s\right) \right) ^{\beta }\left( \left\vert
\partial _{t}u\right\vert ^{2r}+\left\vert \partial _{t}u\right\vert
^{2}\right) dxds,%
\end{array}
\label{proof thm 1 estimate prefinal}
\end{equation}%
for all $t\geq 0$, and for some $k_{2}>0.$

Using Young's inequality we get%
\begin{equation}
\begin{array}{l}
\medskip X\left( t\right) \leq \left( \frac{k_{1}}{2}+\frac{1}{2\epsilon
_{0}\epsilon }\right) \dint_{\Omega }a\left( x\right) \left( 1+\alpha \left(
q\left( x\right) +t\right) \right) ^{\beta -1}\left\vert u\left( t\right)
\right\vert ^{2}dx \\ 
\medskip +\left( k+\epsilon \right) \dint_{\Omega }\left( 1+\alpha \left(
q\left( x\right) +t\right) \right) ^{\beta +1}\left( \left\vert \nabla
u\left( t\right) \right\vert ^{2}+\left\vert \partial _{t}u\left( t\right)
\right\vert ^{2}\right) dx \\ 
+\dint_{\Omega }a\left( x\right) \left( 1+\alpha \left( q\left( x\right)
+t\right) \right) ^{\beta -r+1}\left\vert u\left( t\right) \right\vert
^{r+1}dx%
\end{array}
\label{k estimate sup}
\end{equation}%
and%
\begin{equation}
\begin{array}{l}
\medskip X\left( t\right) \geq \left( \frac{k_{1}}{2}-\frac{1}{\epsilon
_{0}\epsilon }\right) \dint_{\Omega }a\left( x\right) \left( 1+\alpha \left(
q\left( x\right) +t\right) \right) ^{\beta -1}\left\vert u\left( t\right)
\right\vert ^{2}dx \\ 
\medskip +\left( k-\epsilon \right) \dint_{\Omega }\left( 1+\alpha \left(
q\left( x\right) +t\right) \right) ^{\beta +1}\left( \left\vert \nabla
u\left( t\right) \right\vert ^{2}+\left\vert \partial _{t}u\left( t\right)
\right\vert ^{2}\right) dx \\ 
+\dint_{\Omega }a\left( x\right) \left( 1+\alpha \left( q\left( x\right)
+t\right) \right) ^{\beta -r+1}\left\vert u\left( t\right) \right\vert
^{r+1}dx,%
\end{array}
\label{k estimate inf}
\end{equation}%
for all $\epsilon >0.$ We choose (by taking into account of the order below)%
\begin{equation*}
\begin{array}{l}
\medskip \epsilon \text{ such that }k-\epsilon \geq \delta _{0} \\ 
\medskip \delta \text{ such that }\frac{1-k\alpha \left( 1+\beta \right) }{2}%
-\left( 3+\left\Vert \nabla \chi ^{2}\right\Vert _{\infty }\right) \delta
\geq \frac{1-k\alpha \left( 1+\beta \right) }{4} \\ 
k_{1}\text{ such that }\frac{k_{1}}{2}-\frac{1}{2\epsilon _{0}\epsilon }\geq
\delta _{0}\text{ and }\frac{k_{1}\left( 1-\beta \right) }{4}-\frac{2\beta
^{2}}{\epsilon _{0}\delta _{0}}-\left( 3+\left\Vert \nabla \chi
^{2}\right\Vert _{\infty }\right) C_{T,\delta }\geq \delta _{0}.%
\end{array}%
\end{equation*}%
Therefore%
\begin{equation}
\begin{array}{l}
\medskip X\left( t\right) \geq \delta _{0}\dint_{\Omega }a\left( x\right)
\left( 1+\alpha \left( q\left( x\right) +t\right) \right) ^{\beta
-1}\left\vert u\left( t\right) \right\vert ^{2}dx \\ 
\medskip +\delta _{0}\dint_{\Omega }\left( 1+\alpha \left( q\left( x\right)
+t\right) \right) ^{\beta +1}\left( \left\vert \nabla u\left( t\right)
\right\vert ^{2}+\left\vert \partial _{t}u\left( t\right) \right\vert
^{2}\right) dx \\ 
+\dint_{\Omega }a\left( x\right) \left( 1+\alpha \left( q\left( x\right)
+t\right) \right) ^{\beta -r+1}\left\vert u\left( t\right) \right\vert
^{r+1}dx.%
\end{array}
\label{k estimate inf final}
\end{equation}%
and%
\begin{equation*}
\begin{array}{l}
\medskip X\left( t+T\right) -X\left( t\right) +\frac{1-k\alpha \left(
1+\beta \right) }{4}\dint_{t}^{t+T}\dint_{\Omega }\left( 1+\alpha \left(
q\left( x\right) +s\right) \right) ^{\beta }\left( \left\vert \nabla
u\right\vert ^{2}+\left\vert \partial _{t}u\right\vert ^{2}\right) dxds \\ 
\medskip +\delta _{0}\dint_{t}^{t+T}\dint_{\Omega }a\left( x\right) \left(
1+\alpha \left( q\left( x\right) +s\right) \right) ^{\beta -2}\left\vert
u\right\vert ^{2}dxds \\ 
\medskip +\lambda \delta _{0}\dint_{t}^{t+T}\dint_{\Omega }a\left( x\right)
\left( 1+\alpha \left( q\left( x\right) +s\right) \right) ^{\beta
+1}\left\vert \partial _{t}u\right\vert ^{r+1}dxds \\ 
\leq k_{2}\dint_{t}^{t+T}\dint_{\Omega }a\left( x\right) \left( 1+\alpha
\left( q\left( x\right) +s\right) \right) ^{\beta }\left( \left\vert
\partial _{t}u\right\vert ^{2r}+\left\vert \partial _{t}u\right\vert
^{2}\right) dxds,%
\end{array}%
\end{equation*}%
for all $t\geq 0.$ Thus%
\begin{equation}
\begin{array}{l}
\medskip X\left( nT\right) +\frac{1-k\alpha \left( 1+\beta \right) }{4}%
\dint_{0}^{nT}\dint_{\Omega }\left( 1+\alpha \left( q\left( x\right)
+s\right) \right) ^{\beta }\left( \left\vert \nabla u\right\vert
^{2}+\left\vert \partial _{t}u\right\vert ^{2}\right) dxds \\ 
\medskip +\delta _{0}\dint_{0}^{nT}\dint_{\Omega }a\left( x\right) \left(
1+\alpha \left( q\left( x\right) +s\right) \right) ^{\beta -2}\left\vert
u\right\vert ^{2}dxds \\ 
\medskip +\lambda \delta _{0}\dint_{0}^{nT}\dint_{\Omega }a\left( x\right)
\left( 1+\alpha \left( q\left( x\right) +s\right) \right) ^{\beta
+1}\left\vert \partial _{t}u\right\vert ^{r+1}dxds \\ 
\leq k_{2}\dint_{0}^{nT}\dint_{\Omega }a\left( x\right) \left( 1+\alpha
\left( q\left( x\right) +s\right) \right) ^{\beta }\left( \left\vert
\partial _{t}u\right\vert ^{2r}+\left\vert \partial _{t}u\right\vert
^{2}\right) dxds+X\left( 0\right) ,\text{ for all }n\geq 1.%
\end{array}
\label{Xt final estimate1}
\end{equation}%
In view of proposition \ref{proposition weighted energy}%
\begin{equation*}
\medskip X\left( 0\right) \leq CI_{1}.
\end{equation*}%
where $I_{1}$ is defined in the statement of theorem 2. Therefore there
exists a positive constant $k_{3},$ such that%
\begin{equation}
\begin{array}{l}
\medskip X\left( nT\right) +\frac{1-k\alpha \left( 1+\beta \right) }{4}%
\dint_{0}^{nT}\dint_{\Omega }\left( 1+\alpha \left( q\left( x\right)
+s\right) \right) ^{\beta }\left( \left\vert \nabla u\right\vert
^{2}+\left\vert \partial _{t}u\right\vert ^{2}\right) dxds \\ 
\medskip +\delta _{0}\dint_{0}^{nT}\dint_{\Omega }a\left( x\right) \left(
1+\alpha \left( q\left( x\right) +s\right) \right) ^{\beta -2}\left\vert
u\right\vert ^{2}dxds \\ 
\medskip +\lambda \delta _{0}\dint_{0}^{nT}\dint_{\Omega }a\left( x\right)
\left( 1+\alpha \left( q\left( x\right) +s\right) \right) ^{\beta
+1}\left\vert \partial _{t}u\right\vert ^{r+1}dxds \\ 
\leq k_{3}\left( \dint_{0}^{nT}\dint_{\Omega }a\left( x\right) \left(
1+\alpha \left( q\left( x\right) +s\right) \right) ^{\beta }\left(
\left\vert \partial _{t}u\right\vert ^{2}+\left\vert \partial
_{t}u\right\vert ^{2r}\right) dxds+I_{1}\right) ,\text{ for all }n\in 
%TCIMACRO{\U{2115} }%
%BeginExpansion
\mathbb{N}
%EndExpansion
.%
\end{array}
\label{poly aux}
\end{equation}%
As in the proof of theorem 1, we absorb the first term of the RHS of the
estimate above by the last term of the LHS. Proceeding as the proof of
theorem 1, we obtain 
\begin{equation}
\begin{array}{l}
\medskip \dint_{0}^{nT}\dint_{\Omega }a\left( x\right) \left( 1+\alpha
\left( q\left( x\right) +s\right) \right) ^{\beta }\left\vert \partial
_{t}u\right\vert ^{2r}dxds\leq C\epsilon ^{-\frac{p-2r}{r-1}}\left\Vert
a\right\Vert _{L^{\infty }}\left( \left\Vert u_{0}\right\Vert
_{H^{2}}^{2}+\left\Vert u_{1}\right\Vert _{H^{1}}^{2}+\left\Vert
u_{1}\right\Vert _{H^{1}}^{2r}\right) ^{\frac{p}{2}} \\ 
+\frac{\epsilon \left( p-2r\right) }{p-r-1}\dint_{0}^{nT}\dint_{\Omega
}a\left( x\right) \left( 1+\alpha \left( q\left( x\right) +s\right) \right)
^{\beta +1}\left\vert \partial _{t}u\right\vert ^{r+1}dxds.%
\end{array}
\label{poly1}
\end{equation}%
and%
\begin{equation}
\begin{array}{l}
\medskip \dint_{0}^{nT}\dint_{\Omega }a\left( x\right) \left( 1+\alpha
\left( q\left( x\right) +s\right) \right) ^{\beta }\left\vert \partial
_{t}u\right\vert ^{2}dxds \\ 
\leq C\epsilon ^{-\frac{2}{r-1}}+\frac{2\epsilon }{r+1}\dint_{0}^{nT}\dint_{%
\Omega }a\left( x\right) \left( 1+\alpha \left( q\left( x\right) +s\right)
\right) ^{\beta +1}\left\vert \partial _{t}u\right\vert ^{r+1}dxds%
\end{array}
\label{poly 2}
\end{equation}%
We choose $\epsilon $ such that%
\begin{equation*}
\begin{array}{l}
\lambda \delta _{0}-k_{3}\epsilon \left( \frac{p-2r}{p-r-1}+\frac{2}{r+1}%
\right) \geq \frac{\lambda \delta _{0}}{2}%
\end{array}%
\end{equation*}%
So there exists a positive constant $C_{0}$ such that%
\begin{equation*}
\begin{array}{l}
\medskip X\left( nT\right) +\frac{1-k\alpha \left( 1+\beta \right) }{4}%
\dint_{0}^{nT}\dint_{\Omega }\left( 1+\alpha \left( q\left( x\right)
+s\right) \right) ^{\beta }\left( \left\vert \nabla u\right\vert
^{2}+\left\vert \partial _{t}u\right\vert ^{2}\right) dxds \\ 
\medskip +\delta _{0}\dint_{0}^{nT}\dint_{\Omega }a\left( x\right) \left(
1+\alpha \left( q\left( x\right) +s\right) \right) ^{\beta -2}\left\vert
u\right\vert ^{2}dxds \\ 
\medskip +\frac{\lambda \delta _{0}}{2}\dint_{0}^{nT}\dint_{\Omega }a\left(
x\right) \left( 1+\alpha \left( q\left( x\right) +s\right) \right) ^{\beta
+1}\left\vert \partial _{t}u\right\vert ^{r+1}dxds\leq C_{0}I_{1},\text{ for
all }n\in 
%TCIMACRO{\U{2115} }%
%BeginExpansion
\mathbb{N}
%EndExpansion
.%
\end{array}%
\end{equation*}%
Therefore we obtain%
\begin{equation*}
\begin{array}{l}
\medskip \frac{1-k\alpha \left( 1+\beta \right) }{4}\dint_{0}^{+\infty
}\dint_{\Omega }\left( 1+\alpha \left( q\left( x\right) +s\right) \right)
^{\beta }\left( \left\vert \nabla u\right\vert ^{2}+\left\vert \partial
_{t}u\right\vert ^{2}\right) dxds \\ 
\medskip +\delta _{0}\dint_{0}^{+\infty }\dint_{\Omega }a\left( x\right)
\left( 1+\alpha \left( q\left( x\right) +s\right) \right) ^{\beta
-2}\left\vert u\right\vert ^{2}dxds \\ 
\medskip +\frac{\lambda \delta _{0}}{2}\dint_{0}^{+\infty }\dint_{\Omega
}a\left( x\right) \left( 1+\alpha \left( q\left( x\right) +s\right) \right)
^{\beta +1}\left\vert \partial _{t}u\right\vert ^{r+1}dxds\leq C_{0}I_{1},%
\text{ for all }n\in 
%TCIMACRO{\U{2115} }%
%BeginExpansion
\mathbb{N}
%EndExpansion
.%
\end{array}%
\end{equation*}%
Now using the weighted energy estimate $\left( \ref{weighted energy}\right) $%
, we infer that%
\begin{equation*}
\begin{array}{l}
E_{\varphi }\left( u\right) \left( t\right) =\frac{1}{2}\dint_{\Omega
}\left( 1+\alpha \left( q\left( x\right) +s\right) \right) ^{\beta +1}\left(
\left\vert \nabla u\right\vert ^{2}+\left\vert \partial _{t}u\right\vert
^{2}\right) dx \\ 
\leq E_{\varphi }\left( u\right) \left( 0\right) +\alpha \left( \beta
+1\right) \dint_{0}^{\infty }\dint_{\Omega }\left( 1+\alpha \left( q\left(
x\right) +s\right) \right) ^{\beta }\left( \left\vert \nabla u\left(
s\right) \right\vert ^{2}+\left\vert \partial _{t}u\left( s\right)
\right\vert ^{2}\right) dxds \\ 
\leq C_{1}I_{1},%
\end{array}%
\end{equation*}%
\ for some positive constant $C_{1}$. The sought estimate follows from the
estimate above and the fact that%
\begin{equation*}
\left( 1+\alpha t\right) ^{\beta +1}E_{u}\left( t\right) \leq E_{\varphi
}\left( u\right) \left( t\right) .
\end{equation*}%
This finishes the proof of theorem 2.

\section{Proof of Theorem 3}

\subsection{Preliminary results}

Throughout this section we use the following notations: Let $\beta $ be a
real number such that $0<1+\beta <\tau ,$ where 
\begin{equation*}
\begin{array}{c}
\tau _{1}=\frac{2r\delta _{0}^{\frac{r^{2}+1}{r}}\left( \lambda +1\right)
^{r-1}\left( r+1\right) ^{r}}{\delta _{0}^{\frac{r+1}{r}}+\delta
_{0}^{r}\left( \lambda +1\right) ^{r-1}\left( r+1\right) ^{r}\left( r\delta
_{0}\left( \lambda +1\right) \left( r+1\right) +2\delta _{0}^{\frac{1}{r}%
}\right) \allowbreak },%
\end{array}%
\end{equation*}%
$\lambda $ any positive constant such that $\lambda <1~$and 
\begin{equation*}
\delta _{0}=\left( \lambda +1\right) ^{\frac{r^{2}}{r^{2}-1}}\left(
r+1\right) ^{-\frac{r}{r-1}}.
\end{equation*}

Let $R>0,$ we take $\varphi \left( s\right) =\left( R+\alpha s\right)
^{\beta +1}$ where 
\begin{equation*}
\alpha =\frac{rk^{r}\left( r+1\right) +\delta _{0}^{\frac{1}{r}}}{%
k^{r}\delta _{0}^{\frac{1}{r}}\left( r+1\right) \left( r-\tau _{1}\right) },
\end{equation*}%
and 
\begin{equation*}
k=\left( 1+\lambda \right) \left( r+1\right) \delta _{0}.
\end{equation*}

Finally, let $\psi \in C_{c}^{\infty }\left( 
%TCIMACRO{\U{211d} }%
%BeginExpansion
\mathbb{R}
%EndExpansion
^{d}\right) $ such that $0\leq \psi \leq 1$ and 
\begin{equation*}
\psi \left( x\right) =\left\{ 
\begin{array}{ll}
1 & \text{for }\left\vert x\right\vert \leq L \\ 
0 & \text{for }\left\vert x\right\vert \geq 2L%
\end{array}%
\right. .
\end{equation*}

From proposition \ref{proposition onservability global} we deduce the
following result.

\begin{proposition}
We assume that Hyp A holds and $(\omega $,$T)$ geometrically controls $%
\Omega $. Let $\delta >0$,$~R,R_{0}>L$ and $-1<\beta \leq 0$. There exists $%
C_{T,\delta }=C\left( T,\delta ,R_{0},R,\alpha ,\beta \right) >0$, such that
the following inequality%
\begin{equation}
\begin{array}{l}
\medskip \dint_{t}^{t+T}\dint_{\Omega \cap B_{R_{0}}}\left( R+\alpha
s\right) ^{\beta }\left( \left\vert u\right\vert ^{2}+\left\vert \nabla
u\right\vert ^{2}+\left\vert \partial _{t}u\right\vert ^{2}\right) dxds \\ 
\medskip \leq C_{T,\delta }\dint_{t}^{t+T}\dint_{\Omega }a\left( x\right)
\left( R+\alpha s\right) ^{\beta }\left( \left\vert \partial
_{t}u\right\vert ^{2}+\left\vert \partial _{t}u\right\vert ^{2r}\right) dxds
\\ 
\medskip +C_{T,\delta }\dint_{t}^{t+T}\dint_{\Omega }a\left( x\right) \left(
R+\alpha s\right) ^{\beta -2}\left\vert u\right\vert ^{2}dxds \\ 
+\delta \dint_{t}^{t+T}\dint_{\Omega }\left( R+\alpha s\right) ^{\beta
}\left( \left\vert \nabla u\right\vert ^{2}+\left\vert \partial
_{t}u\right\vert ^{2}\right) dxds,%
\end{array}%
\end{equation}%
holds for every $t\geq 0$ and for all $u$ solution of $\left( \ref{system}%
\right) $ with initial data $\left( u_{0},u_{1}\right) $ in $H_{0}^{1}\left(
\Omega \right) \cap H^{2}\left( \Omega \right) \times H_{0}^{1}\left( \Omega
\right) .$
\end{proposition}

As in the proof of theorem we need to define and to show an estimate for an
auxiliary function $X\left( t\right) .$

\begin{lemma}
\label{lemma xt poly copy(1)}Let $u$ be a solution of $\left( \ref{system}%
\right) $ with initial data in $H_{0}^{1}\left( \Omega \right) \cap
H^{2}\left( \Omega \right) \times H_{0}^{1}\left( \Omega \right) $ such
that. We set $\chi =1-\psi ~$and%
\begin{equation}
\begin{array}{l}
\medskip X\left( t\right) =\dint_{\Omega }\left( R+\alpha t\right) ^{\beta
}\chi ^{2}\left( x\right) u\left( t\right) \partial _{t}u\left( t\right) dx+%
\frac{k_{1}}{2}\dint_{\Omega }\left( R+\alpha t\right) ^{\beta -1}a\left(
x\right) \left\vert u\left( t\right) \right\vert ^{2}dx \\ 
+\dint_{\Omega }a\left( x\right) \left( R+\alpha t\right) ^{\beta
-r+1}\left\vert u\left( t\right) \right\vert ^{r+1}dx+\frac{k}{2}%
\dint_{\Omega }\left( R+\alpha t\right) ^{\beta +1}\left( \left\vert \nabla
u\right\vert ^{2}+\left\vert \partial _{t}u\right\vert ^{2}\right) dx,%
\end{array}%
\end{equation}%
where $k_{1}.$ Then%
\begin{equation}
\begin{array}{l}
\medskip X\left( t+T\right) -X\left( t\right) +\frac{2-k\alpha \left(
1+\beta \right) }{2}\dint_{t}^{t+T}\dint_{\Omega }\left( R+\alpha s\right)
^{\beta }\left( \left\vert \nabla u\right\vert ^{2}+\left\vert \partial
_{t}u\right\vert ^{2}\right) dxds \\ 
\medskip +\left( \frac{k_{1}\alpha \left( 1-\beta \right) }{4}-\frac{\beta
^{2}\alpha ^{2}}{\epsilon _{0}\epsilon }\right) \dint_{t}^{t+T}\dint_{\Omega
}a\left( x\right) \left( R+\alpha s\right) ^{\beta -2}\left\vert u\left(
t\right) \right\vert ^{2}dxds \\ 
\medskip +\lambda \delta _{0}\dint_{t}^{t+T}\dint_{\Omega }a\left( x\right)
\left( R+\alpha s\right) ^{\beta +1}\left\vert \partial _{t}u\right\vert
^{r+1}dxds \\ 
\medskip \leq \left( 3+\left\Vert \nabla \chi ^{2}\right\Vert _{\infty
}\right) \dint_{t}^{t+T}\dint_{\Omega \cap B_{2L}}\left( R+\alpha s\right)
^{\beta }\left( \left\vert u\right\vert ^{2}+\left\vert \nabla u\right\vert
^{2}+\left\vert \partial _{t}u\right\vert ^{2}\right) dxds \\ 
+\left( \frac{2}{\epsilon _{0}}+\frac{8k_{1}}{\alpha \left( 1-\beta \right) }%
+\frac{8\beta ^{2}\alpha }{\epsilon _{0}^{2}k_{1}\left( 1-\beta \right) }%
\right) \dint_{t}^{t+T}\dint_{\Omega }a\left( x\right) \left( R+\alpha
s\right) ^{\beta }\left\vert \partial _{t}u\right\vert ^{2}dxds,%
\end{array}%
\end{equation}%
for all $t\geq 0$ and any $\lambda >0.$
\end{lemma}

\begin{proof}
For the proof we have to argue as in the proof of Lemma \ref{lemma xt poly}
and to use the fact that 
\begin{equation*}
\begin{array}{l}
\medskip E_{\varphi }\left( u\right) \left( t+T\right)
+\dint_{t}^{t+T}\dint_{\Omega }a\left( x\right) \left( R+\alpha s\right)
^{\beta +1}\left\vert \partial _{t}u\right\vert ^{r+1}dxds \\ 
\leq E_{\varphi }\left( u\right) \left( t\right) +\frac{\left( \beta
+1\right) \alpha }{2}\dint_{t}^{t+T}\dint_{\Omega }\left( R+\alpha s\right)
^{\beta }\left( \left\vert \nabla u\right\vert ^{2}+\left\vert \partial
_{t}u\right\vert ^{2}\right) dxds.%
\end{array}%
\end{equation*}
\end{proof}

\subsection{Proof of Theorem 3}

For the proof we have to proceed as in the proof of theorem 2 and to use the
finite speed propagation property and the fact that the support of the
initial data is contained in $B_{R}$ to show that%
\begin{equation*}
\begin{array}{l}
\medskip \dint_{0}^{\infty }\left( R+\alpha s\right) ^{\beta }\dint_{\Omega
}a\left( x\right) \left\vert \partial _{t}u\right\vert ^{2r}dxds\leq
C\epsilon ^{-\frac{p-2r}{r-1}}\left\Vert a\right\Vert _{L^{\infty }}\left(
\left\Vert u_{0}\right\Vert _{H^{2}}^{2}+\left\Vert u_{1}\right\Vert
_{H^{1}}^{2}+\left\Vert u_{1}\right\Vert _{H^{1}}^{2r}\right) \\ 
+\frac{\epsilon \left( p-2r\right) }{p-r-1}\dint_{0}^{\infty }\dint_{\Omega
}a\left( x\right) \left( R+\alpha s\right) ^{\beta +1}\left\vert \partial
_{t}u\right\vert ^{r+1}dxds,%
\end{array}%
\end{equation*}%
and%
\begin{equation*}
\begin{array}{l}
\dint_{0}^{\infty }\left( R+\alpha s\right) ^{\beta }\dint_{\Omega }a\left(
x\right) \left\vert \partial _{t}u\right\vert ^{2}dxds\leq C\epsilon ^{-%
\frac{2}{r-1}}+\frac{2\epsilon }{r+1}\dint_{0}^{\infty }\left( R+\alpha
s\right) ^{\beta +1}\dint_{\Omega }a\left( x\right) \left\vert \partial
_{t}u\right\vert ^{r+1}dxds,%
\end{array}%
\end{equation*}%
for some positive constant $C$ and for all $\epsilon >0.$

\end{document}